\newtheorem{thm}{Theorem}[section]
\newtheorem{cor}[thm]{Corollary}
\newtheorem{defin}[thm]{Definition}
\newtheorem{lemma}[thm]{Lemma}
\newtheorem{example}[thm]{Example}
\newtheorem{prop}[thm]{Proposition}
\newtheorem{conj}{Conjecture}
\newcommand{\aaa}{\mbox{$\alpha$}}
\newcommand{\sss}{\mbox{$\sigma$}}
\newcommand{\bdd}{\mbox{$\partial$}}
\def\zed{{\mathbb Z}}
\def\R{{\mathbb R}}
\def\im{\mathop{\rm Im}\nolimits}
\begin{document}

\title[Fibered knots, Property~2R and slice vs ribbon]{Fibered knots and potential counterexamples to the Property~2R and Slice-Ribbon Conjectures}

\author[R E Gompf]{Robert E. Gompf}
\address{\hskip-\parindent
Robert Gompf\\
Mathematics Department \\
University of Texas at Austin\\
1 University Station C1200\\
Austin TX 78712-0257, USA}
\email{gompf@math.utexas.edu}

\author[M Scharlemann]{Martin Scharlemann}
\address{\hskip-\parindent
        Martin Scharlemann\\
        Mathematics Department\\
        University of California\\
        Santa Barbara, CA 93106, USA}
\email{mgscharl@math.ucsb.edu}

\author[A Thompson]{Abigail Thompson}
\address{\hskip-\parindent
Abigail Thompson\\
Mathematics Department \\
University of California, Davis\\
Davis, CA 95616, USA}
\email{thompson@math.ucdavis.edu}

\thanks{Research partially supported by National Science Foundation grants.  Thanks also to Mike Freedman and Microsoft's Station Q for rapidly organizing a mini-conference on this general topic.}

\date{\today}

\begin{abstract} If there are any $2$-component counterexamples to the Generalized Property R Conjecture, a least genus component of all such counterexamples cannot be a fibered knot.  Furthermore, the monodromy of a fibered component of any such counterexample has unexpected restrictions.

The simplest plausible counterexample to the Generalized Property R Conjecture could be a $2$-component link containing the square knot.  We characterize all two-component links that contain the square knot and which surger to $\#_{2} (S^{1} \times S^{2})$.  We exhibit a family of such links that are probably counterexamples to Generalized Property~R. These links can be used to generate slice knots that are not known to be ribbon.

\end{abstract}

\maketitle

\section{Introduction}

Recall the famous Property R theorem, proven in a somewhat stronger form by David Gabai \cite{Ga1}:

\begin{thm}[Property R] \label{thm:PropR} If $0$-framed surgery on a knot $K \subset S^3$ yields $S^1 \times S^2$ then $K$ is the unknot.
\end{thm}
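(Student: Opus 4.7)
The plan is to suppose, toward a contradiction, that $K$ is a non-trivial knot with $0$-framed surgery yielding $M := S^3_0(K) \cong S^1 \times S^2$, and then to exhibit on $M$ a taut codimension-one foliation with a compact leaf of positive genus. The target contradiction will come from the fact that $\pi_1(S^1 \times S^2) = \mathbb{Z}$ cannot accommodate a $\pi_1$-injective closed surface of positive genus.

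First, I would fix a Seifert surface $F$ for $K$ of minimal genus $g = g(K) \geq 1$. The surgery solid torus $V$ glued to the exterior $X = S^3 \setminus \nu(K)$ has its meridian running along the Seifert longitude, so $F$ caps off via a meridional disk of $V$ to a closed oriented surface $\hat{F} \subset M$ of genus $g$. Next, I would invoke Gabai's sutured-manifold machinery: by starting a taut sutured-manifold hierarchy for $X$ with the decomposition along $F$, Gabai produces a transversely oriented, taut, $C^0$ codimension-one foliation $\mathcal{F}$ of $X$ with $F$ as a compact leaf and with $\mathcal{F} \cap \partial X$ a foliation by parallel longitudes of $K$. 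Since these longitudes match the boundary circles of the meridional disks of $V$, I would extend $\mathcal{F}$ across $V$ by those disks to obtain a taut foliation $\hat{\mathcal{F}}$ of $M$ in which $\hat{F}$ remains a compact leaf.

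Finally, I would apply Novikov's theorem to $\hat{\mathcal{F}}$: every compact leaf of a Reeb-less codimension-one foliation on a closed $3$-manifold is $\pi_1$-injective, so $\pi_1(\hat{F})$ would embed in $\pi_1(M) = \mathbb{Z}$. But $\hat{F}$ is a closed orientable surface of genus $g \geq 1$, whose fundamental group contains either $\mathbb{Z} \oplus \mathbb{Z}$ (when $g = 1$) or a non-abelian free subgroup (when $g \geq 2$); neither embeds in $\mathbb{Z}$. This gives the desired contradiction, forcing $g(K) = 0$ and hence $K$ to be the unknot. The main obstacle is clearly the middle step: producing a taut, transversely orientable foliation of $X$ whose leaves include a prescribed minimal-genus Seifert surface and whose boundary foliation is longitudinal is the deep content of Gabai's sutured-manifold program, while the extension across $V$ and the Novikov finish are essentially soft.
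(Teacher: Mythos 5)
The paper does not prove this theorem; it is quoted as Gabai's Property~R theorem and cited to \cite{Ga1}, so there is no in-paper argument to compare against. That said, your outline is essentially a faithful sketch of Gabai's actual proof: a sutured manifold hierarchy beginning with a minimal-genus Seifert surface yields a taut, transversely oriented foliation of the knot exterior having that surface as a compact leaf and meeting the boundary torus in longitudes, and this caps off over the $0$-surgery solid torus by meridian disks to give a taut foliation of $S^3_0(K)$ with a closed leaf of genus $g(K)\geq 1$. Your endgame differs cosmetically from the usual one: Gabai's deduction is typically phrased via the fact that a Reebless foliation of $S^1\times S^2$ must, by Novikov plus Reeb stability, be the product foliation by spheres (note that Thurston norm-minimization of compact leaves alone would only give $g\leq 1$, since a torus also has zero norm), whereas you invoke $\pi_1$-injectivity of leaves in a Reebless foliation; both are components of Novikov's theorem and both rule out a leaf of genus $\geq 1$. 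The only point deserving an explicit remark is regularity: Novikov's theorem is classically stated for $C^2$ foliations, while the hierarchy produces foliations that are only $C^0$ (with smooth leaves) near finitely many compact leaves, so one must either cite the versions of Novikov/Reeb stability valid in that category or smooth the foliation, as Gabai does. With that caveat, your proposal is correct and is the standard route.
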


\noindent Problem~1.82 in Kirby's problem list \cite{Ki2} conjectures a generalization to links: If surgery on an $n$-component link $L$ yields the connected sum $\#_nS^1\times S^2$ then $L$ becomes the unlink after suitable handle slides. (See the next section for further discussion.) Although this Generalized Property~R Conjecture can be traced back more than two decades, it seems that no progress has appeared in the literature until now. The present paper studies the conjecture, focusing on the case of 2-component links. We conclude that the conjecture is probably false, and analyze potential counterexamples. These examples have relevance to the Slice-Ribbon Conjecture. We also propose a weaker version of the Generalized Property~R Conjecture that seems much more likely, and is related to the 4-dimensional smooth Poincar\'e Conjecture.

We concentrate on links $L=U\cup V$ where $U$ is fibered. Critical to the argument are developments in sutured manifold theory, developments that go beyond those used by Gabai in his original proof of Property R. In particular we use sutured manifold theory to show (Theorem~\ref{thm:main}) that any counterexample of this form to the Generalized Property~R Conjecture generates another 2-component counterexample for which one component has smaller genus than $U$. With this motivation, we assume $U$ has genus~2. We then use Heegaard theory to completely characterize such links $L$ satisfying the hypothesis of the Generalized Property~R Conjecture (Corollary~\ref{cor:genustwo}): After handle slides, $V$ is a special type of curve lying on the fiber of $U$. For the simplest plausible $U$, namely the square knot, this characterization becomes simple enough (Corollary~\ref{cor:enumerate}) to allow enumeration of all examples up to handle slides.

 \begin{figure}[ht!]
 \labellist
\small\hair 2pt
\pinlabel ${n \; \; strands}$ at 95 200
\pinlabel ${n \; \; strands}$ at 338 203
\pinlabel ${-1}$ at 25 112
\pinlabel ${+1}$ at 410 115
  \endlabellist
    \centering
    \includegraphics[scale=0.7]{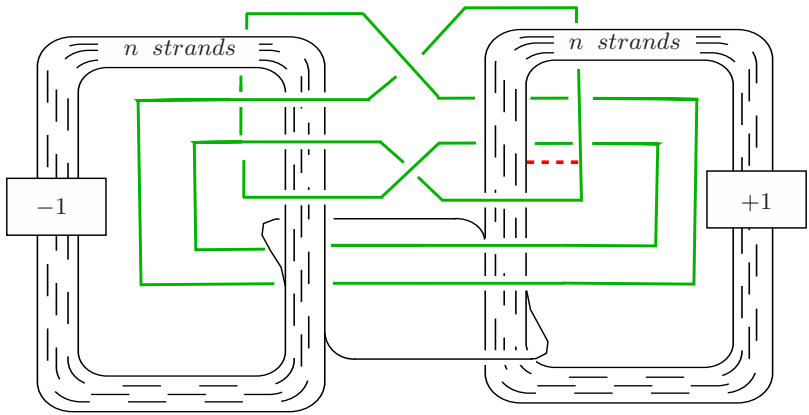}
    \caption{A counterexample to Generalized Property R?} \label{fig:Ln1}
    \end{figure}

We then use classical 4-manifold theory (the 20-year old paper \cite{Go1} of the first author) to argue that the square knot probably does lie in counterexamples to the Generalized Property~R Conjecture. The link $L_{n,1}$ in Figure~\ref{fig:Ln1} consists of a square knot interleaved with the connected sum of an $(n,n+1)$ torus knot with its mirror. (Each summand is depicted as an $n$-stranded spiral, with a full $\pm
1$-twist added relative to the plane of the paper. The dotted arc connecting the components will be used momentarily.)  We show in Section~\ref{sect:nonstandard} that $L_{n,1}$ is a counterexample to the Generalized Property~R Conjecture provided that the presentation $$\langle x,y \; | \; yxy=xyx,\ x^{n+1}=y^n\rangle$$ of the trivial group is Andrews-Curtis nontrivial, as is deemed likely by group theorists when $n\ge 3$. We will see (Section~\ref{slice}) that this link and its generalizations $L_{n,k}$ (Figure~\ref{fig:Lnk}) are necessarily slice, so we can generate slice knots just by band-summing the two components together. For $n\ge 3$ there is no apparent reason for the resulting knots to be ribbon. Thus, we obtain potential counterexamples to the Slice-Ribbon Conjecture. This appears to be the only known method for constructing such examples. Figure~\ref{fig:sliceknot} shows a simple potential counterexample obtained from $L_{3,1}$ by the band-move following the dotted arc in Figure~\ref{fig:Ln1}.

 \begin{figure}[ht!]
 \labellist
\small\hair 2pt
\pinlabel ${-1}$ at 22 112
\pinlabel ${+1}$ at 407 115
  \endlabellist
    \centering
    \includegraphics[scale=0.7]{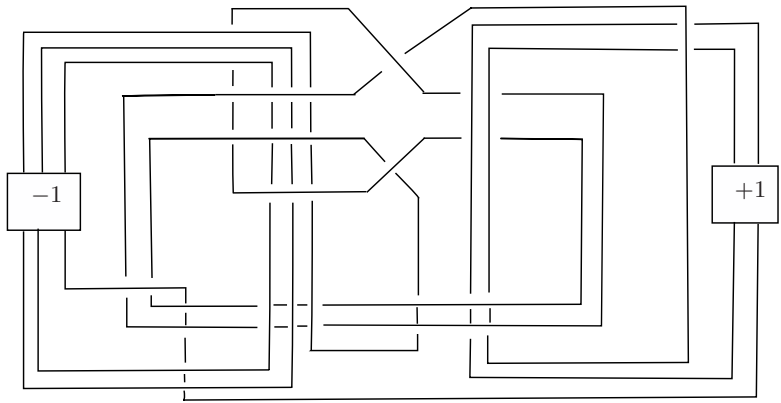}
    \caption{A slice knot that might not be ribbon} \label{fig:sliceknot}
    \end{figure}

The 4-dimensional viewpoint that discredits the Generalized Property~R Conjecture also shows how to rescue it by allowing more moves. In Section~\ref{sect:weakening} we discuss this more plausible version of the conjecture, and its close connection with the  4-dimensional smooth Poincar\'e Conjecture. Since the key new move is somewhat mysterious, Section~\ref{sect:Hopf} shows explicitly how it works to trivialize our potential counterexamples $L_{n,k}$. The main result, Proposition~\ref{prop:Ln}, can in principle be used to construct other potential counterexamples.

Unless explicitly stated otherwise, all manifolds throughout the paper will be smooth, compact and orientable.

\section{Handle slides and Generalized Property R}

To make sense of how Property R might be generalized, recall a small bit of $4$-manifold handlebody theory \cite{GS}.  Suppose $L$ is a link in a $3$-manifold $M$ and each component of $L$ is assigned a framing, that is a preferred choice of cross section to the normal circle bundle of the component in $M$. For example, if $M = S^3$, a framing on a knot is determined by a single integer, the algebraic intersection of the preferred cross-section with the longitude of the knot.  (In an arbitrary $3$-manifold $M$ a knot may not have a naturally defined longitude.)  Surgery on the link via the framing is standard Dehn surgery, though restricted to integral coefficients:  a regular neighborhood of each component is removed and then reattached so that the meridian is identified with the cross-section given by the framing.  Associated to this process is a certain $4$-manifold:  attach $4$-dimensional $2$-handles to $M \times I$ along $L \times \{ 1 \}$, using the given framing of the link components.  The result is a $4$-dimensional cobordism, called the {\em trace} of the surgery, between $M$ and the $3$-manifold $M'$ obtained by surgery on $L$. The collection of belt spheres of the $2$-handles constitute a link $L' \subset M'$ called the dual link; the trace of the surgery on $L \subset M$ can also be viewed as the trace of a surgery on $L' \subset M'$.

The $4$-manifold trace of the surgery on $L$ is unchanged if one $2$-handle is slid over another $2$-handle.  Such a handle slide is one of several moves allowed in the Kirby calculus \cite{Ki1}.  When the $2$-handle corresponding to the framed component $U$ of $L$ is slid over the framed component $V$ of $L$ the effect on the link is to replace $U$ by the band sum $\overline{U}$ of $U$ with a certain copy of $V$, namely the copy given by the preferred cross-section realizing the framing of $V$. If $M = S^3$ and the framings of $U$ and $V$ are given by the integers $m$ and $n$, respectively, then the integer for $\overline{U}$ will be $m+n\pm 2\cdot link(U,V)$ where $link$ denotes the linking number and the sign is $+$ precisely when the orientations of $U$ and the copy of $V$ fit together to orient $\overline{U}$. (This requires us to orient $U$ and $V$, but the answer is easily seen to be independent of the choice of orientations.) The terms {\em handle addition} and {\em subtraction} are used to distinguish the two cases of sign ($+$ and $-$, respectively) in the formula. Any statement about obtaining $3$-manifolds by surgery on a link will have to take account of this move, which we continue to call a handle-slide, in deference to its role in $4$-dimensional handle theory.

Suppose $\overline{U} \subset M$ is obtained from components $U$ and $V$ by the handle-slide of $U$ over $V$ as described above.  Let $U' \subset M'$ and $V' \subset M'$ be the dual knots to $U$ and $V$.  It will be useful to note this counterintuitive but elementary lemma:

\begin{lemma}  \label{lemma:dual}  The link in $M'$ that is dual to $\overline{U} \cup V$ is $U' \cup \overline{V'}$, where $\overline{V'}$ is obtained by a handle-slide of $V'$ over $U'$.
\end{lemma}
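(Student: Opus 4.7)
The natural approach is via the $4$-manifold trace $W$ of the surgery, obtained from $M\times I$ by attaching $2$-handles $h_U, h_V$ along $U$ and $V$ with the given framings. Since $W$ is unchanged by the slide, it also admits a handle decomposition with handles $\overline{h}_U$ (along $\overline{U}$) and $h_V$ (along $V$), and the dual link is by definition the pair of belt circles of these new handles in $M'$.

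For $\overline{h}_U$, I would choose its cocore to be the same disk $D_U$ that served as the cocore of $h_U$. The new core $\overline{C}_U$ can be realized as the band sum of $C_U$ with a parallel push-off of $C_V$ (inside $h_V$) along the band provided by the slide, and $D_U$ meets this at exactly the original transverse point $D_U\cap C_U$, since $D_U$ is disjoint from $C_V$ and hence from any sufficiently small push-off of $C_V$. So $D_U$ is a valid cocore of $\overline{h}_U$, and the belt circle of $\overline{h}_U$ is $\partial D_U = U'$.

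For $h_V$, here is the subtle point. The old cocore $D_V$ no longer has vanishing algebraic intersection with $\overline{C}_U$: the parallel push-off of $C_V$ appearing in $\overline{C}_U$ meets $D_V$ once, so $\overline{C}_U\cdot D_V = 1$, whereas the handle-decomposition disjointness condition demands $0$. To repair this I would use the standard tubing trick: replace $D_V$ by the oriented band sum of $D_V$ with a copy of $D_U$ (with sign chosen so the intersections cancel), performed along an arc in $M'$ joining $V'$ to $U'$. The new disk still meets $C_V$ once (since $D_U\cap C_V = \emptyset$) but now has algebraic intersection zero with $\overline{C}_U$, and a small isotopy removes the remaining geometric intersections. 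Its boundary in $M'$ is the band sum of $V'$ with a parallel copy of $U'$ along the framing of $U'$ --- that is, precisely the curve $\overline{V'}$ produced by the handle slide of $V'$ over $U'$.

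Thus the dual link of $\overline{U}\cup V$ is $U'\cup\overline{V'}$. The main conceptual obstacle, and the source of the lemma's counterintuitive flavor, is that even though $h_V$ itself is unchanged by the slide, its cocore in the new handle decomposition must be re-chosen to meet the disjointness condition, and this re-choice forces the band sum on the $M'$ side. The induced framings are easily tracked through the tubing, and agree with the linking-number formula recorded just above the lemma.
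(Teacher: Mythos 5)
Your proof is correct, and in spirit it is a careful verbal rendering of the second argument the paper sketches: Figure~\ref{fig:dual} is precisely your picture of cores and cocores with the $U$-handle half-slid over the $V$-handle. The paper's primary argument is different in flavor: it reduces to the local model in which the ambient $3$-manifold is a genus-$2$ handlebody (a regular neighborhood of $U\cup V\cup\mathrm{band}$) and simply reads the dual curves off as boundaries of meridian disks in Figure~\ref{fig:dual2}. What your version adds is an explicit explanation of \emph{why} the statement is counterintuitive --- the attaching circle of $h_U$ changes but its cocore does not, while the untouched handle $h_V$ is exactly the one whose cocore must be re-chosen --- which the paper leaves implicit in its figures.

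One step is stated too casually: after tubing $D_V$ to a parallel copy of $D_U$ you say that ``a small isotopy removes the remaining geometric intersections'' with $\overline{C}_U$. In a $4$-manifold, algebraic intersection number zero between a disk and a surface does not by itself give geometric disjointness via a small isotopy; as written this is the weak link. It is true here, but the cleanest fix is to build the disjoint cocore directly rather than cancel afterwards: the second handle of the new decomposition is $h_V$ with a collar of the absorbed parallel core $D^2\times\{q\}$ removed, so its cocore is $\{0\}\times(D^2\setminus\nu(q))$ --- the old $D_V$ with a small disk around $D_V\cap(D^2\times\{q\})$ excised and its boundary pushed around the absorbed copy, which is a small linking circle of $\overline{C}_U$, i.e.\ a parallel copy of $U'$. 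This produces $\overline{V'}$ with no excess intersections ever created, and the framing bookkeeping is the same as in your last paragraph.
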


\begin{proof}  It suffices to check this for the simple case in which the $3$-manifold is a genus $2$ handlebody, namely a regular neighborhood of $U$, $V$, and the arc between them along which the band-sum is done.  A sketch of this is shown in Figure \ref{fig:dual2}.  The dual knots $U' = \overline{U}'$, $V'$ and $\overline{V}'$ are displayed as boundaries of meridian disks for regular neighborhoods of $U$, $V$ and $\overline{V} = V$ respectively.

 \begin{figure}[ht!]
 \labellist
\small\hair 2pt
\pinlabel \color{red}{$\overline{U}$} at 390 543
\pinlabel \color{red}{$U$} at 100 543
\pinlabel \color{black}{slide} at 171 512
\pinlabel \color{blue}{$\overline{V'}$} at 465 500
\pinlabel ${U'}$ at 90 435
\pinlabel ${\overline{U}' = U'}$ at 380 435
\pinlabel \color{ForestGreen}{$V$} at 245 545
\pinlabel \color{blue}{$V'$} at 225 435
 \color{black}
  \endlabellist
    \centering
    \includegraphics[scale=0.7]{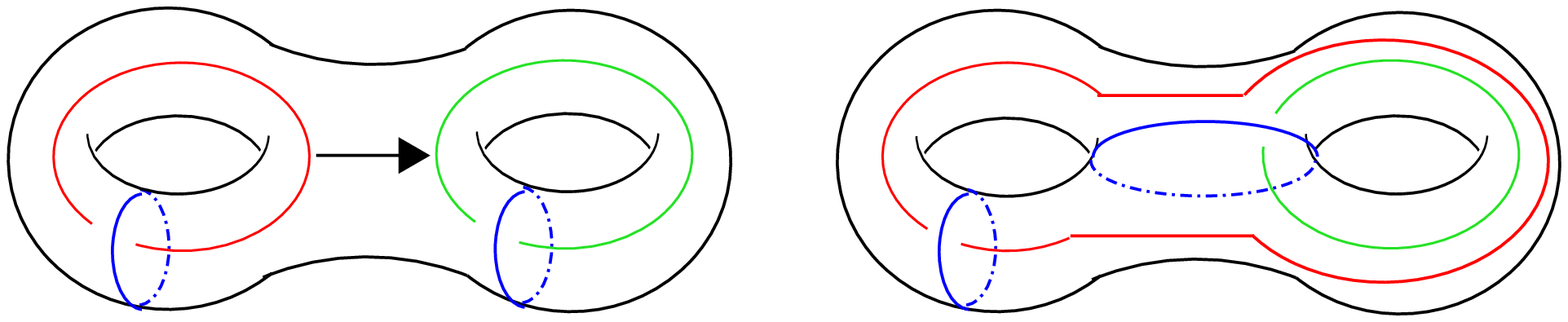}
    \caption{} \label{fig:dual2}
    \end{figure}

Alternatively, a $2$-dimensional schematic of the $4$-dimensional process is shown in Figure \ref{fig:dual}.  The handle corresponding to $U$ is shown half-way slid across the handle corresponding to $V$.  Each disk in the figure is given the same label as its boundary knot in $M$ or $M'$ as appropriate. \end{proof}

     \begin{figure}[ht!]
 \labellist
\small\hair 2pt
\pinlabel \color{red}{$\overline{U}$} at 37 183
\pinlabel \color{red}{$U$} at 137 86
\pinlabel \color{black}{slide} at 346 168
\pinlabel \color{blue}{$\overline{V'}$} at 238 122
\pinlabel ${U' = \overline{U}'}$ at 162 238
\pinlabel \color{ForestGreen}{$V$} at 310 50
\pinlabel \color{blue}{$V'$} at 396 101
 \color{black}
  \endlabellist
    \centering
    \includegraphics[scale=0.6]{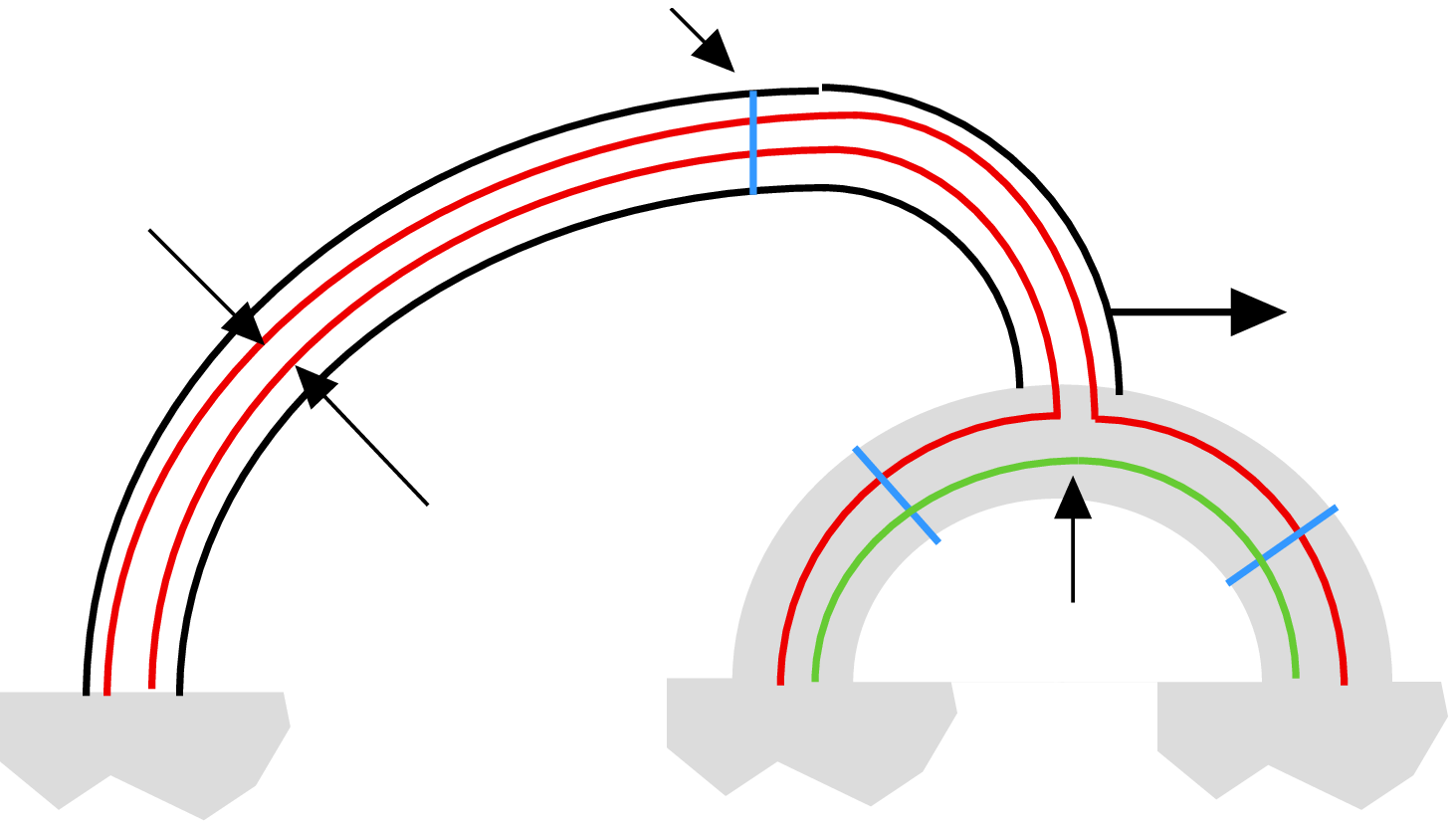}
    \caption{} \label{fig:dual}
    \end{figure}

Let $\#_{n} (S^{1} \times S^{2})$ denote the connected sum of $n$ copies of $S^1 \times S^2$.  The Generalized Property R conjecture (see \cite[Problem 1.82]{Ki2}) says this:

\begin{conj}[Generalized Property R] \label{conj:genR} Suppose $L$ is an integrally framed link of $n \geq 1$ components in $S^3$, and surgery on $L$ via the specified framing yields $\#_{n} (S^{1} \times S^{2})$.  Then there is a sequence of handle slides on $L$ that converts $L$ into a $0$-framed unlink.
\end{conj}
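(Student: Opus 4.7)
Since this is a conjecture (and one the present paper ultimately calls into doubt), the plan below sketches the most natural line of attack and identifies where it is expected to break down. The approach is induction on $n$, with base case $n=1$ given by Gabai's Property~R theorem (Theorem~\ref{thm:PropR}).

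For the inductive step, I would pass to the $4$-dimensional trace $W$ of the surgery on $L$: attach $n$ two-handles to $B^4$ along $L$ with the prescribed framings. The upper boundary of $W$ is $\#_n(S^1\times S^2)$, which bounds $\natural_n (S^1 \times B^3)$; capping this off yields a closed $4$-manifold $X$ built from one $0$-handle, $n$ $2$-handles, $n$ $3$-handles and one $4$-handle. A standard computation gives $\pi_1(X)=1$ and $H_\ast(X)\cong H_\ast(S^4)$, so Freedman's theorem makes $X$ \emph{topologically} homeomorphic to $S^4$. The conjecture says exactly that the handle decomposition of $X$ can be reduced, via handle slides on the $2$-handles, to the standard decomposition of $S^4$ with no $2$- or $3$-handles. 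Equivalently, one wants to arrange by handle slides on $L$ that some component becomes a $0$-framed unknot geometrically split from the rest; the remaining $n-1$ components then satisfy the hypothesis for $\#_{n-1}(S^1\times S^2)$ and induction applies.

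At the handle level, cancelling one $2$-handle against a $3$-handle requires producing a smoothly embedded $2$-sphere in $\#_n(S^1\times S^2)$ meeting the belt sphere of some $2$-handle transversely in a single point. Topologically such a sphere exists because $X\cong S^4$; the real content is to realize it smoothly. In the special case where a component $U$ of $L$ is fibered, Theorem~\ref{thm:main} functions as a finer inductive device on the side: it replaces $U$ by a fibered component of strictly smaller genus, so iteration reduces $U$ to an unknot and then to the base case. A natural subplan is therefore to try to arrange, by handle slides alone, that some component becomes fibered; but there seems to be no mechanism for this in general, which already signals trouble.

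The main obstacle, then, is precisely the smooth-versus-topological gap emphasized in the introduction: the topological handle cancellation guaranteed by Freedman is essentially a form of the smooth $4$-dimensional Poincar\'e Conjecture, and it is this step that the plan cannot deliver. Consistent with this, the paper's subsequent analysis of the family $L_{n,1}$ (Figure~\ref{fig:Ln1}) suggests that the step genuinely fails for $n\ge 3$, so the plan stalls at exactly the point where Conjecture~\ref{conj:genR} itself is believed to break, and where the authors propose replacing it by a weaker statement in Section~\ref{sect:weakening}.
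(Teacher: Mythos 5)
The statement you were asked to prove is a conjecture, not a theorem: the paper supplies no proof, and in fact spends most of its length arguing that the conjecture is probably \emph{false}. So declining to deliver a complete argument is the right call, and your framework --- passing to the trace $W$ of the surgery, capping off with $\natural_n(S^1\times B^3)$ to get a homotopy $4$-sphere $X$, and recasting the conjecture as the statement that the $2$-handles can be slid to cancel the $3$-handles --- is exactly the $4$-dimensional picture the paper uses in Sections~\ref{sect:nonstandard} and~\ref{sect:weakening}.

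Your diagnosis of where the plan stalls is, however, not the one the paper identifies, and the distinction is the central point of the paper. You locate the obstruction in the smooth-versus-topological gap, i.e.\ in the smooth $4$-dimensional Poincar\'e Conjecture. But for the paper's candidate counterexamples $L_{n,k}$ the closed manifold $X$ is already known to be \emph{diffeomorphic} to $S^4$ (this is the content of \cite{Go1}), so the smooth Poincar\'e Conjecture is not what fails there. The genuine obstruction is that handle slides alone --- with no introduction of canceling handle pairs --- induce only Andrews--Curtis moves on the balanced presentation $\langle g_1,\dots,g_n\mid r_1,\dots,r_n\rangle$ of the trivial group determined by $L$, and the presentations $\langle x,y \mid yxy=xyx,\ x^{n+1}=y^n\rangle$ arising from $L_{n,1}$ are believed to be Andrews--Curtis nontrivial for $n\ge 3$. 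It is only the \emph{Weak} Generalized Property~R Conjecture (Conjecture~\ref{conj:weakgenR}), which permits adding canceling Hopf pairs and a distant unlink before sliding, that the paper proves equivalent to the smooth Poincar\'e Conjecture for homotopy spheres without $1$-handles; the full conjecture is strictly stronger and can fail even when $X$ is standard. Separately, your proposed inductive step --- splitting off a $0$-framed unknot by slides and invoking the case $n-1$ --- has no mechanism behind it, as you concede, and Theorem~\ref{thm:main} cannot substitute for one: it reduces the genus of a fibered component within the class of $2$-component counterexamples, but it neither produces fibered components nor interacts with an induction on the number of components.
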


In the case $n = 1$ no slides are possible, so Conjecture \ref{conj:genR} does indeed directly generalize Theorem \ref{thm:PropR}.  On the other hand, for $n > 1$ it is certainly necessary to include the possibility of handle slides.  Figure \ref{fig:squareknot} shows an example of a more complicated link  on which $0$-framed surgery creates $\#_{2} (S^{1} \times S^{2})$.  To see this, note that the Kirby move shown, band-summing the square knot component to a copy of the unknotted component,  changes the original link to the unlink of two components, for which we know surgery yields $\#_{2} (S^{1} \times S^{2})$.  Even more complicated links with this property can be obtained, simply by using Kirby moves that complicate the link rather than simplify it.  See Figure \ref{fig:squareknot2b}; the free ends of the band shown can be connected in an arbitrarily linked or knotted way.

 \begin{figure}[ht!]
 \labellist
\small\hair 2pt
\pinlabel $0$ at 72 89
\pinlabel $0$ at 137 120
\pinlabel $0$ at 281 18
\pinlabel $0$ at 338 120
\pinlabel $0$ at 432 18
\pinlabel $0$ at 432 120
\pinlabel {band sum here} at 108 -11

 \endlabellist
    \centering
    \includegraphics[scale=0.7]{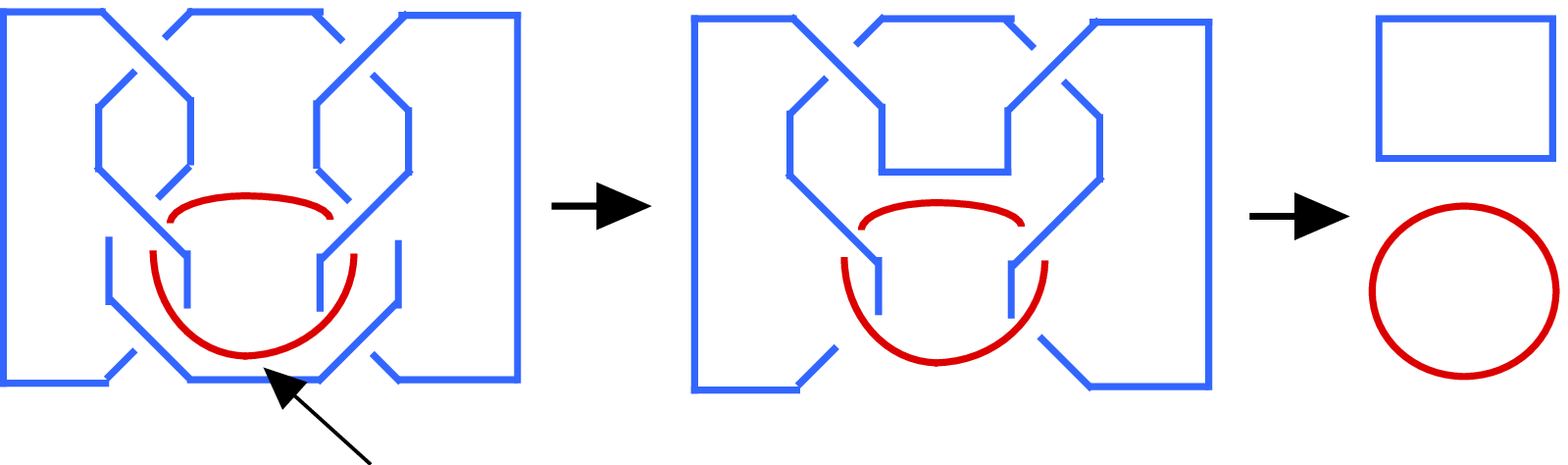}
    \caption{} \label{fig:squareknot}
    \end{figure}

 \begin{figure}[ht!]
    \centering
    \includegraphics[scale=0.7]{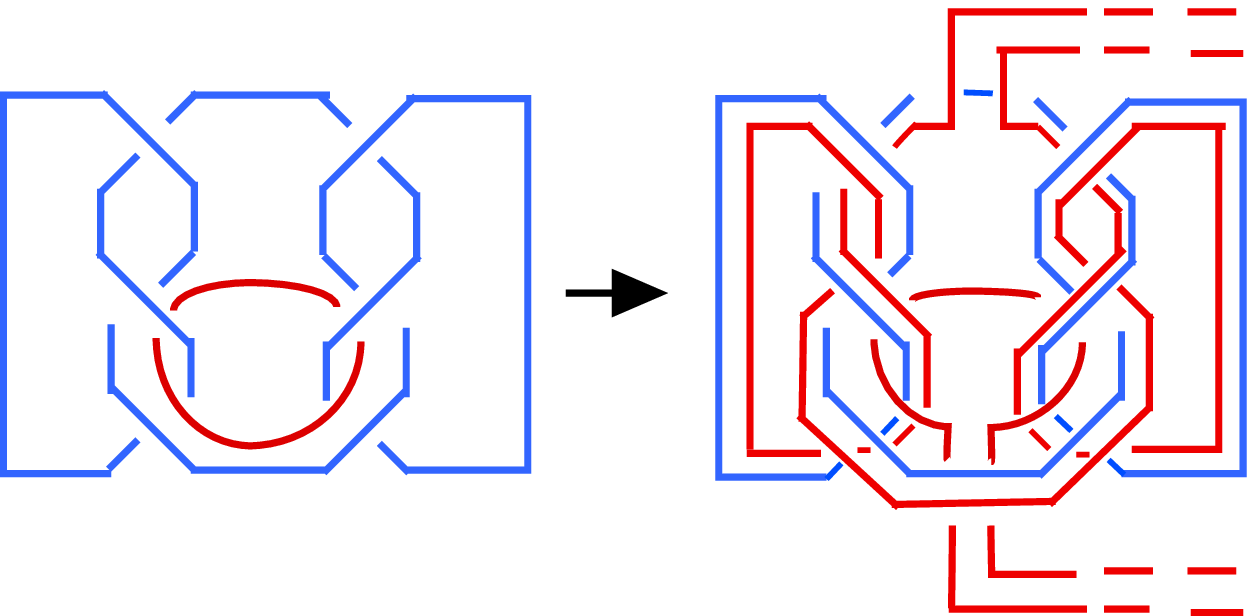}
    \caption{} \label{fig:squareknot2b}
    \end{figure}

The conjecture can be clarified somewhat by observing that the only framed links that are relevant are those in which all framings and linking numbers are trivial.  There is a straightforward $4$-dimensional proof, using the intersection pairing on the trace of the surgery.  Here is an equally elementary $3$-dimensional proof:

\begin{prop} \label{prop:frame}  Suppose $L$ is a framed link of $n \geq 1$ components in $S^3$, and surgery on $L$ via the specified framing yields $\#_{n} (S^{1} \times S^{2})$.  Then the components of $L$ are algebraically unlinked and the framing on each component is the $0$-framing.
\end{prop}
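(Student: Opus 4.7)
The plan is to compute $H_1$ of the surgered manifold from the standard Dehn surgery presentation and compare with the known value $H_1(\#_n(S^1\times S^2))\cong\mathbb{Z}^n$.

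Write $L=K_1\cup\cdots\cup K_n$ with framing $f_i$ on $K_i$ and linking numbers $\ell_{ij}=\mathrm{lk}(K_i,K_j)$ for $i\neq j$. The first homology of the link exterior $S^3\setminus\nu(L)$ is free abelian of rank $n$, generated by the meridians $\mu_1,\ldots,\mu_n$. In the surgered manifold $S^3_L$, the curve on $\partial\nu(K_i)$ that becomes null-homologous is the framing curve, whose homology class in the exterior is $f_i\mu_i+\sum_{j\neq i}\ell_{ij}\mu_j$. Hence $H_1(S^3_L)$ is presented by the symmetric integer matrix $A$ with $A_{ii}=f_i$ and $A_{ij}=\ell_{ij}$; this is the first step and is a standard Mayer--Vietoris or handle-presentation computation.

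Next, identify $H_1(\#_n(S^1\times S^2))\cong\mathbb{Z}^n$ and bring $A$ to Smith normal form $\mathrm{diag}(d_1,\ldots,d_r,0,\ldots,0)$, where $r=\mathrm{rank}(A)$. The cokernel is $\bigoplus_{i=1}^r\mathbb{Z}/d_i\mathbb{Z}\oplus\mathbb{Z}^{n-r}$, and the requirement that this be isomorphic to $\mathbb{Z}^n$ forces $r=0$, so $A$ is the zero matrix. Reading off the entries, every framing $f_i$ and every linking number $\ell_{ij}$ vanishes, which is the desired conclusion.

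The only thing requiring any care is the translation between the Dehn surgery data and the presentation matrix for $H_1$; once that is pinned down, the algebraic step is essentially forced by the classification of finitely generated abelian groups. No nontrivial obstacle appears, which matches the author's description of the proof as ``equally elementary''.
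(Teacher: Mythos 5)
Your proof is correct. The translation from surgery data to the presentation matrix is the standard one: $H_1$ of the link exterior is free on the meridians $\mu_1,\dots,\mu_n$ (Alexander duality), the framing curve on $\partial\nu(K_i)$ represents $f_i\mu_i+\sum_{j\ne i}\ell_{ij}\mu_j$, so $H_1(S^3_L)=\operatorname{coker}(A)$ for the linking matrix $A$; and $\operatorname{coker}(A)\cong\mathbb{Z}^n$ forces $\operatorname{rank}(A)=0$, hence $A=0$. This is in essence the ``straightforward $4$-dimensional proof, using the intersection pairing on the trace of the surgery'' that the authors mention but deliberately do not write out ($A$ is exactly the intersection matrix of the trace); what they actually present is the other, torus-by-torus argument: for each boundary torus $T$ they observe that $\ker\bigl(H_1(T)\to H_1(S^3-\eta(L))\bigr)$ must contain the framing curve yet be contained in the subgroup generated by the standard longitude (since only the longitude can die even before the other components are removed), which pins the framing to $0$ first and then kills the linking numbers because the longitude is null-homologous in the exterior. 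The two routes prove the same homological fact; yours is more mechanical and makes the ``free of rank $n$ iff the matrix vanishes'' step explicit via Smith normal form, while theirs avoids choosing a basis and identifies the surgery slope geometrically as the Seifert longitude. Either is acceptable; just make sure you state (as you do) that the free rank of the cokernel is $n-r$, so even unit elementary divisors are excluded.
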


\begin{proof}  It follows immediately from Alexander duality that $H_1(S^3 - \eta(L)) \cong H^1(\eta(L)) \cong\mathbb{Z}^n.$  In particular, filling in the solid tori via whatever framing we are given yields an epimorphism, hence an isomorphism $H_1(S^3 - \eta(L)) \to H_1(\#_{n} (S^{1} \times S^{2}))$.  For each torus component $T$ of $\bdd \eta(L)$, the filling will kill some generator of $H_1(T)$, so the homomorphism $H_1(T) \to H_1(\#_{n} (S^{1} \times S^{2}))$ is not injective.  It follows that the homomorphism $H_1(T) \to H_1(S^3 - \eta(L))$ cannot be injective and, moreover, $ker(H_1(T) \to H_1(S^3 - \eta(L)))$ must contain the framing curve.  But $ker(H_1(T) \to H_1(S^3 - \eta(L)))$ must be contained in the subgroup generated by the standard longitude, since this is the only subgroup that is trivial when the other components of $\eta(L)$ are just left in place and not removed from $S^3$.  It follows that the framing at each component is that of the standard longitude, namely the $0$-framing.   Since the longitude of each $T$ is null-homologous in $H_1(S^3 - \eta(L))$ it follows that all linking numbers are trivial.
\end{proof}

There is also an immediate topological restriction on the link itself, which carries over to a restriction on the knots that can appear as individual components of such a link (see \cite[Theorem 2]{Hi}):

\begin{prop}[Hillman] \label{prop:slice}  Suppose $L$ is a framed link of $n \geq 1$ components in $S^3$, and surgery on $L$ via the specified framing yields $\#_{n} (S^{1} \times S^{2})$.  Then $L$ bounds a collection of $n$ smooth $2$-disks in a $4$-dimensional homotopy ball bounded by $S^3$.
\end{prop}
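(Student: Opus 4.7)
The plan is to construct the desired homotopy ball explicitly from the trace of the surgery and then read the disks off its handle structure.

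Let $W$ denote the trace of the surgery on $L$, a cobordism from $S^3$ to $\#_n(S^1\times S^2)$ built from $S^3\times I$ by attaching $n$ two-handles along $L\times\{1\}$ with the prescribed framings. I would cap off the $\#_n(S^1\times S^2)$ end by gluing on a copy of the 1-handlebody $Y=\natural_n(S^1\times B^3)$, producing a compact oriented 4-manifold $X$ with $\partial X=S^3$. Viewed from the $Y$ side, $X$ acquires a handle decomposition with one 0-handle, $n$ 1-handles, and $n$ 2-handles (the duals of those of $W$, attached along the dual link $L'\subset\#_n(S^1\times S^2)$). The cores of the original 2-handles of $W$ are $n$ pairwise disjoint smoothly embedded 2-disks in $X$ whose boundaries form the link $L$ in the collar $S^3\times\{1\}\subset W$; pushing them down the collar gives the disks in $X$ bounded by $L\subset\partial X$.

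It then suffices to show that $X$ is contractible. For $\pi_1$: $W$ is simply connected since it is built from $S^3\times I$ by attaching 2-handles, so the inclusion $\#_n(S^1\times S^2)\hookrightarrow W$ is trivial on $\pi_1$, while the inclusion $\#_n(S^1\times S^2)\hookrightarrow Y$ is an isomorphism on $\pi_1=F_n$. Van Kampen applied to the decomposition $X=W\cup_{\#_n(S^1\times S^2)}Y$ gives $\pi_1(X)=F_n/F_n=1$. For homology: the handle decomposition shows $X$ is homotopy equivalent to a 2-complex, so $H_i(X)=0$ for $i\ge 3$; Hurewicz gives $H_1(X)=0$; and $\chi(X)=1-n+n=1$ together with $H_0(X)=\mathbb{Z}$ forces the rank of $H_2(X)$ to vanish. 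Finally, $H_1(X)=0$ forces the cellular boundary $\partial_2\colon\mathbb{Z}^n\to\mathbb{Z}^n$ to be surjective, hence an integer isomorphism, so $H_2(X)=0$ as well. Thus $X$ is a simply connected homology ball, hence contractible, hence a homotopy ball.

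The main technical point I expect to have to pin down is the handle structure itself: one must check that the gluing of $Y$ to $W$ produces a 4-manifold whose only handles are the 0- and 1-handles of $Y$ together with the 2-handles of $W$, and that the cores of these 2-handles really do recover $L$ after a collar isotopy. The rest of the argument is routine van Kampen and Euler-characteristic bookkeeping. It is worth noting that the proof uses only the surgery identification $\partial_+W\cong\#_n(S^1\times S^2)$ and not the sharper information about framings and linking numbers in Proposition~\ref{prop:frame}.
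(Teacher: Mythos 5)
Your proposal is correct and follows essentially the same route as the paper: both cap off the trace $W$ of the surgery with $\natural_n(S^1\times B^3)$ along the $\#_n(S^1\times S^2)$ end and take the cores of the $2$-handles as the slice disks. The only difference is that you spell out the verification that the resulting manifold is a homotopy ball (van Kampen plus the homology count), which the paper compresses into the single assertion that $W$ has the homotopy type of a once-punctured $\natural_n(B^2\times S^2)$.
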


It follows from Freedman's proof of the $4$-dimensional topological Poincar\'e Conjecture \cite{Fr} that $L$ (and so each component of $L$) is topologically slice in $B^4$.  See also the earlier \cite{KM} for the case $n = 1$.

\begin{proof}  Consider the $4$-manifold trace $W$ of the surgery on $L$.   $\bdd W$ has one end diffeomorphic to $S^3$ and the other end, call it $\bdd_1 W$, diffeomorphic to $\#_{n} (S^{1} \times S^{2})$.  $W$ has the homotopy type of a once-punctured $\natural_{n} (B^{2} \times S^{2})$.  Attach $\natural_{n} (S^{1} \times B^{3})$ to $\bdd_1 W$ via the natural identification $\bdd B^{3} \cong S^2$.  The result is a homotopy $4$-ball, and the cores of the original $n$ $2$-handles that are attached to $L$ are the required $n$ $2$-disks.
\end{proof}

Somewhat playfully, we can turn the Generalized Property R Conjecture, which is a conjecture about links, into a conjecture about knots, and also stratify it by the number of components, via the following definition and conjecture.

\begin{defin} A knot $K \subset S^3$ has {\bf Property nR} if it does not appear among the components of any $n$-component counterexamples to the Generalized Property R conjecture.  That is, whenever $K$ is a component of an n-component link $L \subset S^3$ and some integral surgery on $L$ produces  $\#_{n} (S^{1} \times S^{2})$, then there is a sequence of handle slides on $L$ that converts $L$ into a $0$-framed unlink.
\end{defin}

\begin{conj}[Property nR Conjecture] All knots have Property nR.
\end{conj}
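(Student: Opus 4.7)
Since this statement is a conjecture, not a theorem, what follows is a proposal for the most promising attack rather than a genuine proof. The plan is to argue by induction on $n$, with base case $n=1$ supplied by Gabai's Property R theorem (Theorem~\ref{thm:PropR}). In the inductive step, I would first restrict attention, via Propositions~\ref{prop:frame} and~\ref{prop:slice}, to $0$-framed algebraically split links whose components are smoothly slice in a homotopy $4$-ball, since these are the only links in which counterexamples can live.

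Next, I would stratify the potential counterexamples containing $K$ by the Seifert genus of $K$ and try to mimic Gabai's sutured manifold argument to show that if $K$ lies in a counterexample, then so does a knot of strictly smaller genus. This would drive the induction down to the unknot, where Property R closes the argument. For fibered $K$ and $n=2$, the reduction step of this plan is carried out by Theorem~\ref{thm:main}, and the remaining genus-two case is pinned down by Corollary~\ref{cor:genustwo}, which forces the second component $V$ to be a special curve on the fiber of $K$. What remains is to produce, for each such link, an explicit sequence of handle slides trivializing it, and to show that the genus reduction works without the fibered hypothesis.

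The main obstacle — and the reason this conjecture is, as the introduction stresses, very likely false — is algebraic rather than topological: the handle-slide moves required to trivialize the candidate counterexamples $L_{n,1}$ descend to Andrews--Curtis trivializations of the presentation $\langle x,y\mid yxy=xyx,\;x^{n+1}=y^n\rangle$ of the trivial group, and group theorists believe this presentation to be Andrews--Curtis nontrivial for $n\ge 3$. Consequently, any proof of the full conjecture that uses only handle slides must fail for the square knot. A realistic plan is therefore to abandon the full conjecture and prove instead either (i) Property nR for restricted classes of knots (knots of small genus, or ones whose associated trivial-group presentations demonstrably admit Andrews--Curtis trivializations) via the sutured-manifold and Heegaard-theoretic program above, or (ii) the weaker version of Generalized Property R formulated in Section~\ref{sect:weakening}, in which an additional four-dimensional move bypasses the Andrews--Curtis obstruction.
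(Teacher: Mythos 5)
The paper offers no proof of this statement---it is posed as an open conjecture, and indeed most of the paper is devoted to arguing that it is probably \emph{false} for $n=2$ via the square knot and the Andrews--Curtis obstruction---and your proposal correctly recognizes this, tracking the paper's own partial results (Theorem~\ref{thm:main}, Corollary~\ref{cor:genustwo}) and its retreat to the weaker conjecture of Section~\ref{sect:weakening}. Since there is no proof in the paper to compare against, your accurate assessment of the state of the problem is the appropriate response.
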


Thus the Generalized Property R conjecture is that the Property nR Conjecture is true for all $n \geq 1$.  Following Proposition \ref{prop:slice} any non-slice knot has Property nR for all $n$.  The first thing that we will show (Theorem \ref{thm:main}) is that if there are any counterexamples to Property~2R, a least genus such example cannot be fibered.  We already know that both of the genus one fibered knots (the figure 8 knot and the trefoil) cannot be counterexamples, since they are not slice, and we show below (Proposition \ref{prop:unknot}) that the unknot cannot be a counterexample. So these simplest of fibered knots do have Property~2R.  These considerations suggest that, if one is searching for a potential counterexample to Property nR and hope to exploit a knot's geometric properties, the simplest candidate could be the square knot, viewed as a potential counterexample to Property~2R.  After all, the square knot is symmetric, slice, fibered, and both its genus and its crossing number are low.

In fact it turns out that there is a relatively simple range of possibilities for what a $2$-component link containing the square knot and surgering to $\#_{2} (S^{1} \times S^{2})$ can look like.  The background for this comes from the theory of Heegaard splittings and is discussed in Sections \ref{Sect:Heeg} and \ref{sect:squareknot}.   In Section \ref{sect:nonstandard} we  present  strong evidence that Property~2R does indeed fail for the square knot.  Since the square knot is fibered, it would then follow from Theorem \ref{thm:main} that there is a counterexample to Property~2R among genus one knots.

On the other hand, the square knot may well satisfy a somewhat weaker property, which is still useful in the connection of Property~R to such outstanding $4$-manifold questions as the smooth Poincar\'e Conjecture and the smooth Schoenflies Conjecture.  For this weaker property, copies of canceling Hopf pairs may be added to the link before the handle slides and then removed after the handle slides. Beyond our specific example, we discuss the mechanics of how addition and later removal of a Hopf pair can be used to reduce the complexity of a surgery description.

\section{Special results for Property 2R}

Almost nothing is known about Generalized Property R, beyond the elementary facts noted in Propositions \ref{prop:frame} and \ref{prop:slice} that the framing and linking of the components of the link are all trivial and the links themselves are slice in a possibly exotic ball.
A bit more is known about Property~2R.  The first proposition was shown to us by Alan Reid:

\begin{prop}[A. Reid]  Suppose $L \subset S^3$ is a $2$-component link with tunnel number $1$.  If surgery on $L$ gives $\#_{2} (S^{1} \times S^{2})$ then $L$ is the unlink of two components.
\end{prop}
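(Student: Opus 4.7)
The plan is to use Heegaard splittings: the tunnel-number-one hypothesis provides a genus-$2$ Heegaard splitting of $S^3$ with $L$ built into the spine of one side, the $0$-surgery (forced by Proposition~\ref{prop:frame}) turns this into a genus-$2$ Heegaard splitting of $\#_2(S^1\times S^2)$, Waldhausen's uniqueness theorem identifies this with the standard splitting, and a reducing sphere then extracts the unlink conclusion.

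Concretely, fix an unknotting tunnel $\alpha$ so that $H := S^3\setminus\mathrm{int}\,\eta(L\cup\alpha)$ is a genus-$2$ handlebody, and view $H' := \eta(L\cup\alpha)$ as the boundary connect-sum $T_1\natural T_2$ of the solid tori $T_i := \eta(L_i)$ joined by the $1$-handle $\eta(\alpha)$; this gives a genus-$2$ Heegaard splitting $S^3 = H\cup_\Sigma H'$. The $0$-surgery on $L$ replaces each $T_i$ by a solid torus $T_i'$ whose meridian is the $0$-framed longitude of $L_i$, while leaving $H$ and $\eta(\alpha)$ untouched; thus $V := T_1'\natural T_2'$ is still a genus-$2$ handlebody, and $\#_2(S^1\times S^2) = H\cup_\Sigma V$ is a genus-$2$ Heegaard splitting of the surgered manifold.

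By Waldhausen's theorem, every Heegaard splitting of $\#_n(S^1\times S^2)$ is standard, so $H\cup_\Sigma V$ admits a reducing $2$-sphere $S$ realizing the prime decomposition $(S^1\times S^2)\#(S^1\times S^2)$. I would next arrange $S$ so that $S\cap V$ is isotopic in $V$ to the cocore disk of the $1$-handle $\eta(\alpha)$; such a sphere separates the dual cores $L_i'$ (core of $T_i'$) into the two prime summands. Once this separation is achieved, $0$-surgery on $L'$ restricts in each summand to $0$-surgery on $L_i'\subset S^1\times S^2$ yielding $S^3$, and the dual knot in $S^3$ then satisfies the hypothesis of Property~R (Theorem~\ref{thm:PropR}) and is unknotted, forcing each $L_i'$ to be a standard $S^1\times\{\mathrm{pt}\}$ curve. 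Hence $L'$ is the standard split pair in $\#_2(S^1\times S^2)$ and its dual $L$ is the $2$-component unlink.

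The hard part is the disk-matching step: in an abstract genus-$2$ handlebody essential separating disks are not unique up to ambient isotopy, so it is not automatic that the reducing disk $S\cap V$ coincides with the natural cocore of $\eta(\alpha)$. I would tackle this by combining Waldhausen's uniqueness of the Heegaard splitting with uniqueness of the connect-sum $2$-sphere in the prime decomposition of $\#_2(S^1\times S^2)$, arguing that a reducing sphere can be isotoped so that its intersection with $V$ realizes the natural decomposition $V = T_1'\natural T_2'$.
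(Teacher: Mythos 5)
Your reduction to a genus-$2$ Heegaard splitting $H\cup_\Sigma V$ of $\#_2(S^1\times S^2)$ is fine, but the step you yourself flag as the hard part is a genuine gap, and the tool you propose for closing it does not exist. Essential separating spheres realizing the prime decomposition of $\#_2(S^1\times S^2)$ are \emph{not} unique up to isotopy: different ones induce different decompositions of $\pi_1\cong F_2$ as a free product of two infinite cyclic factors, and there are infinitely many inequivalent such decompositions (e.g.\ $\langle x\rangle\ast\langle y\rangle$ versus $\langle xy\rangle\ast\langle y\rangle$). Likewise, essential separating disks in a genus-$2$ handlebody are not unique up to isotopy. So neither Waldhausen's uniqueness of the splitting nor any ``uniqueness of the connect-sum sphere'' can force your reducing sphere $S$ to meet $V$ in the cocore of $\eta(\alpha)$. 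Without that matching, $S$ separates the cores of \emph{some} boundary-connect-sum decomposition of $V$ into the two summands, but not necessarily the dual link $L'$, and the summand-by-summand Property~R argument never gets started.

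The gap is repairable, but by a mechanism that makes Waldhausen superfluous: the map $\pi_1(H)\to\pi_1(\#_2(S^1\times S^2))$ is an epimorphism $F_2\to F_2$, hence an isomorphism because free groups are Hopfian; therefore the boundary of the cocore disk $D_\alpha$ of $\eta(\alpha)$ --- which bounds in $V$ and so is null-homotopic in the surgered manifold --- is null-homotopic in $H$ and, by Dehn's lemma, bounds an embedded disk there. The union of the two disks is a reducing sphere already adapted to the tunnel, and your endgame then works. Notice that the essential input is exactly the Hopfian property of $F_2$, which is also the engine of the paper's proof: there, one observes that $\pi_1(S^3-\eta(L))$ admits an epimorphism from $\pi_1$ of the genus-$2$ handlebody (fill in the tunnel) and an epimorphism onto $\pi_1(\#_2(S^1\times S^2))$ (fill in the surgery tori), so it is squeezed between two copies of $F_2$ and must itself be free of rank $2$; the classical fact that a $2$-component link with free link group is the unlink then finishes, with no Heegaard theory, reducing spheres, or appeal to Property~R at all.
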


\begin{proof} The assumption that $L$ has tunnel number $1$ means that there is a properly embedded arc $\aaa \subset S^3 - \eta(L)$ so that  $S^3 - (\eta(L) \cup \eta(\aaa))$ is a genus $2$ handlebody $H$.  Let $G = \pi_1(S^3 - \eta(L))$.  There is an obvious epimorphism $Z*Z \cong \pi_1(H) \to G$ (fill in a meridian disk of $\eta(\aaa)$) and an obvious epimorphism $G \to \pi_1(\#_{2} (S^{1} \times S^{2})) \cong Z*Z$ (fill in solid tori via the given framing).  But any epimorphism $Z*Z \to Z*Z$ is an isomorphism, since free groups are Hopfian, so in fact $G \cong Z*Z$.  It is then a classical result that $L$ must be the unlink of two components. \end{proof}

This first step towards the Property~2R conjecture is a bit disappointing, however, since handle-slides (the new and necessary ingredient for Generalized Property R) do not arise.  In contrast, Figure \ref{fig:squareknot} shows that handle slides are needed in the proof of the following:

\begin{prop}  \label{prop:unknot}  The unknot has Property 2R.
\end{prop}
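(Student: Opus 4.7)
First, by Proposition~\ref{prop:frame} both framings are $0$ and $lk(U,V)=0$, where $U$ denotes the unknot and $V$ the other component. Since $U$ is unknotted, performing its $0$-framed surgery alone yields $S^1\times S^2$, inside which $V$ becomes a knot of homology class $lk(U,V)=0\in H_1(S^1\times S^2)\cong\mathbb Z$; as $\pi_1(S^1\times S^2)$ is abelian, $V$ is in fact null-homotopic there. The full hypothesis on $L$ becomes: $0$-framed surgery on this null-homotopic $V\subset S^1\times S^2$ produces $\#_{2}(S^{1}\times S^{2})$. Crucially, handle slides of $V$ over $U$ in $S^3$ correspond precisely to ambient isotopies of $V$ in $S^1\times S^2$: a $0$-framed push-off of $U$ is the attaching curve for such a slide, and it bounds the meridian disk of the solid torus glued in by the surgery on $U$, so band-summing $V$ with it is, after surgery, the same as pushing a strand of $V$ across that disk.

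The plan is therefore to isotope $V$ inside $S^1\times S^2$ into a $3$-ball $B$ disjoint from the cocore $2$-sphere of the $U$-handle. The central geometric fact needed is that \emph{a null-homotopic knot in $S^1\times S^2$ is isotopic into a $3$-ball}. I would prove this by choosing a non-separating $2$-sphere $S\subset S^1\times S^2$ dual to $U$, placing $V$ transverse to $S$, and reducing $|V\cap S|$ --- necessarily even, since $V\cdot S=0$ --- by innermost-disk / loop-theorem arguments applied to a null-homotopy disk for $V$ put in general position with respect to $S$. Each canceling pair of opposite-sign intersections is removed by isotoping $V$ across an embedded disk whose boundary consists of an arc on $V$ and an arc on $S$. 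Iterating, one pushes $V$ into $(S^1\times S^2)\setminus S\cong S^2\times\mathbb R$, and then a simple compactness argument isotopes $V$ into a ball $B$.

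Once $V\subset B$, the surgery description decomposes as the connected sum of $S^{1}\times S^{2}$ (contributed by $U$ alone) with the $3$-manifold obtained by $0$-framed surgery on $V$, viewed as a knot in the $3$-sphere obtained by capping off $B$. The hypothesis that the total is $\#_{2}(S^{1}\times S^{2})$ forces this second surgery to be $S^{1}\times S^{2}$, whence Gabai's Property~R (Theorem~\ref{thm:PropR}) yields that $V$ is an unknot, with framing~$0$ by Proposition~\ref{prop:frame}. The sequence of handle slides used to arrange $V\subset B$ therefore converts $L$ into a $0$-framed unlink, as required.

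The principal obstacle is the geometric claim about null-homotopic knots in $S^1\times S^2$, together with the bookkeeping required to realize each intersection-canceling isotopy of $V$ as an explicit handle slide of $V$ over $U$ in $S^3$. Apart from that step, the argument is essentially an application of Property~R ``one surgery at a time.''
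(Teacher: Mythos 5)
Your reduction to ``isotope $V$ into a ball, then apply Property~R'' is the right target, and your translation of handle slides of $V$ over $U$ into isotopies of $V$ in $S^1\times S^2$ crossing the dual circle $U'$ matches the paper. But the step you yourself flag as the ``central geometric fact'' --- that a null-homotopic knot in $S^1\times S^2$ is isotopic into a $3$-ball --- is false, and the proposed innermost-disk argument cannot be repaired. A knot in $S^1\times S^2$ lies in a ball (up to isotopy) essentially only when its complement is reducible, and there are many null-homologous (hence, since $\pi_1(S^1\times S^2)\cong H_1$, null-homotopic) knots with irreducible, even hyperbolic, complement: for instance, by Myers-type results every homotopy class of knots in $S^1\times S^2$ contains representatives with hyperbolic complement. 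The mechanism of failure in your argument is that the null-homotopy is only a \emph{singular} disk; general position with respect to the non-separating sphere $S$ gives you canceling pairs of intersections of $V$ with $S$ algebraically, but the ``embedded disk whose boundary consists of an arc on $V$ and an arc on $S$'' needed to cancel a pair geometrically need not exist, and the loop theorem does not produce it (it produces embedded disks with boundary an essential curve in a surface, not Whitney-type disks for a knot).

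What is missing is precisely where the hypothesis that $0$-framed surgery on $V\subset S^1\times S^2$ yields $\#_2(S^1\times S^2)$ must enter, and this is where the paper invests all of its effort. The paper first rules out the possibility that $V$ is a satellite, using Gabai's theorem on surgery on knots in solid tori together with Proposition~\ref{prop:frame} (the surgery slope on the companion torus would have to be longitudinal, forcing winding number $1$ and hence $V$ a core). It then shows $S^1\times S^2-\eta(V)$ is reducible by a sutured manifold argument: if it were irreducible it would be a taut sutured manifold, yet both the trivial and the $0$-framed fillings of $\bdd\eta(V)$ give reducible manifolds, contradicting Gabai's theorem that at most one filling of a taut sutured manifold can fail to be taut. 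Only after reducibility of the complement is established does one get $V$ isotopic into a ball, and from there your concluding application of Property~R is correct. So the proposal is not a proof as it stands; the deep input from sutured manifold theory (or some substitute using the surgery hypothesis) is unavoidable.
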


\begin{proof}  Suppose $L$ is the union of two components,  the unknot $U$ and another knot $V$, and suppose some surgery on $L$ gives $\#_{2} (S^{1} \times S^{2})$.  Following Proposition \ref{prop:frame} the surgery is via the $0$-framing on each and, since $U$ is the unknot, $0$-framed surgery on $U$ alone creates $S^1 \times S^2$.  Moreover, the curve $U' \subset S^1 \times S^2$ that is dual to $U$ is simply $S^1 \times \{ p \}$ for some point $p \in S^2$.

A first possibility is that $V$ is a satellite knot in $S^1 \times S^2$, so $V$ lies in a solid torus $K$ in such a way that the torus $\bdd K$ is essential in $S^1 \times S^2 - \eta(V)$.  Since there is no essential torus in $\#_{2} (S^{1} \times S^{2})$, $\bdd K$ compresses after the surgery on $V$.  Since $\#_{2} (S^{1} \times S^{2})$ contains no summand with finite non-trivial homology, it follows from the main theorem of \cite{Ga3} that $V$ is a braid in $K$ and that surgery on $V$ has the same effect on $S^1 \times S^2$ as some surgery on $K$.  Proposition \ref{prop:frame} shows that the surgery on $K$ must be along a longitude of $K$, but that would imply that $V$ has winding number $1$ in $K$.  The only braid in a solid torus with winding number $1$ is the core of the solid torus, so in fact $V$ is merely a core of $K$ and no satellite.  So we conclude that $V \subset S^1 \times S^2$ cannot be a satellite knot.

Consider the manifold $S^1 \times S^2 - \eta(V)$.  If it is irreducible, then it is a taut sutured manifold (see, for example, \cite{Ga1}) and two different fillings (trivial vs. $0$-framed) along $\bdd \eta(V)$ yield reducible, hence non-taut sutured manifolds.  This contradicts \cite{Ga2}.  We conclude that $S^1 \times S^2 - \eta(V)$ is reducible.  It follows that $V$ is isotopic \underline{in $S^1 \times S^2$} to a knot $\overline{V}$ lying in a $3$-ball in $S^1 \times S^2 - U'$ and that surgery on $\overline{V} \subset B^3$ creates a summand of the form $S^1 \times S^2$.  By Property R, we know that $\overline{V}$ is the unknot in $B^3$.  Hence $U \cup \overline{V} \subset S^3$ is the unlink of two components.

The proof, though, is not yet complete, because the isotopy of $V$ to $\overline{V}$ in $S^1 \times S^2$ may pass through $U'$.  But passing $V$ through $U' \subset S^1 \times S^2$ can be viewed as band-summing $V$ to the boundary of a meridian disk of $U'$ in $S^1 \times S^2$.  So the effect back in $S^3$ is to replace $V$ with the band sum of $V$ with a longitude of $U$.  In other words, the knot $\overline{V}$, when viewed in $S^3$, is obtained by from $V$ by a series of handle slides over $U$, a move that is permitted under Generalized Property R.
\end{proof}

In a similar spirit, the first goal of the present paper is to prove a modest generalization of Proposition \ref{prop:unknot}.   A pleasant feature is that, since the square knot is fibered, Figure \ref{fig:squareknot2b} shows that the proof will require handle slides of {\em both} components of the link.

\begin{thm} \label{thm:main} No smallest genus counterexample to Property 2R is fibered.
\end{thm}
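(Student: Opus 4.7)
The plan is to assume for contradiction that $L=U\cup V$ is a Property~2R counterexample with $U$ a fibered component of minimal genus $g\geq 1$, and to construct from $L$ a second two-component counterexample $L''=U''\cup V''$ in which some component has genus strictly less than $g$, contradicting minimality.

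Zero surgery on $U$ yields a closed $3$-manifold $M_U$ that fibers over $S^1$ with fiber $\hat F$ of genus $g$, the capped-off Seifert surface. By Proposition~\ref{prop:frame}, $V$ has framing $0$ and $\operatorname{lk}(U,V)=0$, so $V\subset M_U$ is null-homologous, meets $\hat F$ algebraically zero times, and zero-framed surgery along $V\subset M_U$ produces $\#_2(S^1\times S^2)$. The first step is to apply sutured manifold theory to the exterior $N=M_U\setminus\eta(V)$, endowed with the sutured structure inherited from the fibration of $M_U$. Since $\#_2(S^1\times S^2)$ is reducible, Gabai-type results on fillings of taut sutured manifolds should force the relevant sutured hierarchy of $N$ to degenerate. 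The target conclusion is that, after isotopy and a sequence of handle slides of $V$ over $U$, the knot $V$ lies on a single fiber $\hat F$; the translation is that each time an isotopy of $V$ in $M_U$ crosses the cocore of the $2$-handle attached to $U$, it is realized as a handle slide of $V$ over $U$ back in $S^3$.

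With $V$ pushed onto $\hat F$, the surface framing of $V$ induced from $\hat F$ differs from the $0$-framing by some integer $n$, which can be adjusted by further handle slides of $V$ over $U$ until the two framings coincide. Then zero-surgery on $V$ compresses $\hat F$ along $V$: if $V$ is non-separating in $\hat F$, the resulting surface has genus $g-1$ and is a fiber of a new fibered knot $U''\subset S^3$. Pairing $U''$ with a suitable surgery-dual companion $V''$ yields a two-component link in $S^3$ whose $0$-surgery is still $\#_2(S^1\times S^2)$ and whose fibered component has genus $g-1$. Any trivialization of $U''\cup V''$ by handle slides could then be reversed to trivialize $L$, contradicting the counterexample hypothesis.

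The main obstacle is the sutured-manifold step that puts $V$ on a fiber. This goes beyond Gabai's original Property~R argument because one must track $V$ as well as $U$; the essential new input is that the fibered structure of $M_U$ hands $N$ a product-like sutured decomposition. Secondary delicate points are the bookkeeping that promotes isotopies of $V$ inside $M_U$ into handle slides in $S^3$, and the handling of degenerate cases where the curve $V\subset\hat F$ turns out to be separating or inessential; each such degeneration requires a side argument, either trivializing $L$ directly by slides or reducing genus by a different route, for example by swapping the roles of $U$ and $V$ using Lemma~\ref{lemma:dual}.
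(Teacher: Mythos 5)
The first half of your outline matches the paper's strategy, though the "sutured manifold" step you leave open is precisely \cite[Corollary 4.2]{ST}: either $V$ lies in a ball (whence $L$ is the unlink, as in Proposition~\ref{prop:unknot}), or $V$ is cabled with surgery slope that of the cabling annulus (ruled out because a Lens space summand cannot occur in $\#_{2}(S^{1}\times S^{2})$ --- a case your proposal never mentions), or $V$ isotopes into a fiber \emph{with surgery slope equal to the fiber framing}. That last clause matters, because your proposed fix for a framing discrepancy cannot work: since $\mathrm{lk}(U,V)=0$ by Proposition~\ref{prop:frame}, sliding $V$ over the $0$-framed $U$ changes its framing by $0+0\pm 2\,\mathrm{lk}(U,V)=0$, so no sequence of handle slides adjusts the surface framing of $V$ to its $0$-framing. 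The coincidence of the two framings must come from the cited theorem itself, not from slides.

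The serious gap is in the descent step. Compressing the closed fiber $F$ along $V$ produces a surface $F'$ living in $\#_{2}(S^{1}\times S^{2})$, not in $S^3$, and there is no ``new fibered knot $U''\subset S^3$ with fiber $F'$'': nothing forces the lower-genus knot to be fibered, and the theorem is phrased as it is precisely because the descent yields a possibly non-fibered smaller-genus component rather than an infinite descent among fibered knots. The mechanism you are missing is the paper's use of the dual link. Let $U'$, $V'$ be the duals of $U$, $V$ in $\#_{2}(S^{1}\times S^{2})$, and let $F''$ be the component of $F'$ meeting $U'$; it does so in a single point but may meet $V'$. Those intersections are removed by sliding $V'$ over $U'$ along arcs in $F''$, and by Lemma~\ref{lemma:dual} these slides are realized in $S^3$ as slides of $U$ over $V$, producing a knot $\overline{U}$. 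Then $F''-U'$ is a Seifert surface for $\overline{U}$ in $S^3$ of genus strictly less than $g$, and $\overline{U}\cup V$ is handle-slide equivalent to $L$, hence still a counterexample, contradicting minimality of $g$ over components of all counterexamples. Your ``suitable surgery-dual companion $V''$'' is not a construction; without this dual-slide bookkeeping there is no way to exhibit the compressed surface as a Seifert surface of anything in $S^3$. (Also, the separating case needs no role-swap: when $V$ is essential and separating, both components of $F'$ already have genus less than $g$, and one of them still meets $U'$ once, so the same argument applies verbatim.)
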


\begin{proof}  Echoing the notation of Proposition \ref{prop:unknot}, suppose there is a $2$-component counterexample to Generalized Property R consisting of a fibered knot $U$ and another knot $V$.  Let $M$ be the $3$-manifold obtained by $0$-framed surgery on $U$ alone; $M$ will play the role that $S^1 \times S^2$ played in the proof of Proposition \ref{prop:unknot}.  Since $U$ is a fibered knot, $M$ fibers over the circle with fiber $F$, a closed orientable surface of the same genus as $U$.  The dual to $U$ in $M$ is a knot $U'$ that passes through each fiber exactly once.

The hypothesis is that $0$-framed surgery on $V \subset M$ creates $\#_{2} (S^{1} \times S^{2})$.  Following  \cite[Corollary 4.2]{ST}, either  the knot $V$ lies in a ball, or $V$ is cabled with the surgery slope that of the cabling annulus, or $V$ can be isotoped in $M$ to lie in a fiber, with surgery slope that of the fiber.  If $V$ were cabled, then the surgery on $K$ would create a Lens space summand, which is clearly impossible in $\#_{n} (S^{1} \times S^{2})$.  If $V$ can be isotoped into a ball or into a fiber, then, as argued in the proof of Proposition \ref{prop:unknot}, the isotopy in $M$ is realized in $S^3$ by handle-slides of $V$ over $U$, so we may as well regard $V$ as lying either in a ball that is disjoint from $U'$ or in a fiber $F \subset M$.  The former case, $V$ in a ball disjoint from $U'$ would, as in Proposition \ref{prop:unknot}, imply that the link $U \cup V \subset S^3$ is the unlink.  So we can assume that $V \subset F \subset M$.

The surgery on $V$ that changes $M$ to $\#_{2} (S^{1} \times S^{2})$ has this local effect near $F$:  $M$ is cut open along $F$, creating two copies $F_1, F_2$ of $F$, a $2$-handle is attached to  the copy of $V$ in each $F_i, i = 1, 2$, compressing the copies of the fiber to surfaces $F'_1, F'_2$.  The surfaces $F'_i$ are then glued back together by the obvious identification to give a surface $F' \subset \#_{2} (S^{1} \times S^{2})$. (See the Surgery Principle Lemma \ref{lemma:surgprin} below for more detail.)   This surface has two important features:  each component of $F'$ (there are two components if and only if $V$ is separating in $F$) has lower genus than $F$; and $F'$ intersects $U'$ in a single point.

Let $V' \subset \#_{2} (S^{1} \times S^{2})$ be the dual knot to $V$ and let $F''$ be the component of $F'$ that intersects $U'$.  $V'$ intersects $F'$ in some collection of points (in fact, two points, but that is not important for the argument).  Each point in $V' \cap F''$ can be removed by a handle-slide of $V'$ over $U'$ along an arc in $F''$.  Let $V''$ be the final result of these handle-slides.  Then $F''$ is an orientable surface that has lower genus than $F$, is disjoint from $V''$ and intersects $U'$ in a single point.

Following Lemma \ref{lemma:dual} the handle-slides of $V'$ over $U'$ in $\#_{2} (S^{1} \times S^{2})$ correspond in $S^3$ to handle-slides of $U$ over $V$.  Call the knot in $S^3$ that results from all these handle-slides $\overline{U} \subset S^3$.  Since $F''$ is disjoint from $V''$, and intersects $U'$ in a single meridian, $F'' - U'$ is a surface in $S^3 - \overline{U}$ whose boundary is a longitude of $\overline{U}$.  In other words, the knot $\overline{U}$, still a counterexample to Property~2R, has $$genus(\overline{U}) = genus(F'') < genus(F) = genus(U)$$ as required.
\end{proof}

\section{Fibered manifolds and Heegaard splittings} \label{Sect:Heeg}

We have just seen that a fibered counterexample to Property~2R would not be a least genus counterexample.  We now explore other properties of potential fibered counterexamples.  In this section we consider what can be said about the monodromy of a fibered knot in $S^3$, and the placement of a second component with respect to the fibering, so that surgery on the $2$-component link yields $\#_{2} (S^{1} \times S^{2})$.  Perhaps surprisingly, the theory of Heegaard splittings is useful in answering these questions.  Much of this section in fact considers the more general question of when $\#_{2} (S^{1} \times S^{2})$ can be created by surgery on a knot in a $3$-manifold $M$ that fibers over a circle.  The application to Property~2R comes from the special case in which the manifold $M$ is obtained from $0$-framed surgery on a fibered knot in $S^3$.

Suppose $F$ is a $2$-sided surface in a $3$-manifold $M$ and $c \subset F$ is an essential simple closed curve in $F$.  A tubular neighborhood $\eta(c) \subset M$ intersects $F$ in an annulus; the boundary of the annulus in $\bdd \eta(c)$ defines a slope on $\eta(c)$.  Let $M_{surg}$ denote the manifold obtained from $M$ by surgery on $c$ with this slope and let $F'$ be the surface obtained from $F$ by compressing $F$ along $c$.

\begin{lemma}[Surgery Principle] \label{lemma:surgprin}  $M_{surg}$ can be obtained from $M$ by the following $3$-step process:
\begin{enumerate}
\item Cut $M$ open along $F$, creating two new surfaces $F_1, F_2$ in the boundary, each homeomorphic to $F$.
\item Attach a $2$-handle to each of $F_1, F_2$ along the copy of $c$ it contains.  This changes each of the new boundary surfaces $F_1, F_2$ to a copy of $F'$.  Denote these two surfaces $F'_1, F'_2$.
\item Glue $F'_1$ to $F'_2$ via the natural identification.
\end{enumerate}
\end{lemma}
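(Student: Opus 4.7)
The plan is to verify that surgery and the three-step process produce the same manifold by comparing them locally in a tubular neighborhood $\eta(c)$ of the curve $c$, and observing that they agree on the complement.

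First I would note that outside $\eta(c)$, both operations have no effect. The surgery by definition only modifies $\eta(c)$. For the three-step process, the annular neighborhood of $c$ in $F$ used as the attaching region for the 2-handles can be taken to lie inside $\eta(c)$; outside this annulus, $F$ and $F'$ agree, and the natural identification $F_1' \to F_2'$ restricted to this outer piece is simply the identity gluing used to reassemble $M$ from $M\setminus F$. So both outputs agree on $M\setminus\eta(c)$, and it remains to check what happens inside $\eta(c)$.

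Model $\eta(c)=c\times D^2$ with $A=F\cap\eta(c)=c\times[-\delta,\delta]$ an annulus along a diameter. The two boundary circles $c\times\{\pm\delta\}$ of $A$ lie on $\partial\eta(c)$ and have the slope $\lambda$ parallel to $c$; this is the $F$-slope of the lemma. Surgery replaces $\eta(c)$ by a new solid torus $N'$ glued so that a meridian of $N'$ is bounded by a $\lambda$-slope curve. For the three-step process, cutting along $A$ separates $\eta(c)$ into two solid tori $N_1,N_2$ (each is $c$ times a half-disk), and the copies $A_1\subset F_1$, $A_2\subset F_2$ of $A$ lie in their boundaries. The core $c_i$ of $A_i$ is a longitude of $N_i$, and the 2-handle $H_i$ is attached along $c_i$ with the surface framing from $F_i$. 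Attaching a 2-handle along a longitude of a solid torus produces a 3-ball, so each $B_i:=N_i\cup H_i$ is a 3-ball whose boundary sphere decomposes as $A_i^{\mathrm{out}}\cup D_i^+\cup D_i^-$, where $A_i^{\mathrm{out}}$ is the half of $\partial\eta(c)$ on the $i$-side of $F$ and $D_i^+,D_i^-$ are the two cap disks of $H_i$, whose boundary circles are $c\times\{+\delta\}$ and $c\times\{-\delta\}$ respectively.

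The natural identification $F_1'\to F_2'$ restricted to cap disks pairs them by matching the width parameter perpendicular to $c$ in $F$: $D_1^+\leftrightarrow D_2^+$ and $D_1^-\leftrightarrow D_2^-$. Gluing the two 3-balls $B_1,B_2$ along these two pairs of disks yields a solid torus (two 3-balls glued along one disk form a 3-ball, and then identifying two further disks in its boundary sphere produces an $S^1\times D^2$). Its outer boundary torus is $A_1^{\mathrm{out}}\cup A_2^{\mathrm{out}}=\partial\eta(c)$, and a meridian disk is supplied by any identified pair, say $D_1^+=D_2^+$, whose boundary circle is $c\times\{\delta\}$ of slope $\lambda$. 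This is exactly the surgery torus $N'$, and so the three-step output inside $\eta(c)$ matches $M_{\text{surg}}$. The main obstacle is this last combinatorial check: one must verify that the identified cap disks really form a meridian disk of the resulting solid torus, with boundary of the correct slope $\lambda$. This amounts to being careful about which cap disk of $H_1$ is naturally identified with which cap disk of $H_2$ (orientation compatibility forces the non-crisscrossed pairing) and recognizing that the $F$-framing used to attach the 2-handles is precisely what places the cap-disk boundaries on $\lambda$-slope curves in $\partial\eta(c)$. Once this is pinned down, identifying the three-step output with $M_{\text{surg}}$ is immediate.
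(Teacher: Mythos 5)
Your proof is correct and follows essentially the same route as the paper's: both rest on decomposing the reglued surgery solid torus as two $2$-handles ($I\times D^2$ pieces) identified along pairs of cap disks, with an identified cap disk serving as the meridian disk of slope given by $F$. You run the argument from the three-step process toward the surgery (and keep the half-neighborhoods $N_i$ in the picture) rather than the reverse, but the content is the same.
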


\begin{proof}  The surgery itself is a $2$-step process: Remove a neighborhood of $\eta(c)$, then glue back a copy of $S^1 \times D^2$ so that $\{point \} \times \bdd D^2$ is the given slope.  The first step is equivalent to cutting $F$ along an annulus neighborhood $A$ of $c$ in $F$, creating a torus boundary component as the boundary union of the two copies $A_1, A_2$ of $A$.  Thinking of $S^1$ as the boundary union of two intervals, the second step can itself be viewed as a two-step process:  attach a copy of $I \times D^2$ to each annulus $A_i, i = 1, 2$ along $I \times \bdd D^2$ (call the attached copies $(I \times D^2)_i$), then identify the boundary disks $(\bdd I \times D^2)_1$ with $(\bdd I \times D^2)_2$ in the natural way.  This creates a three-stage process which is exactly that defined in the lemma, except that in the lemma $F-A$ is first cut apart and then reglued by the identity. \end{proof}

The case in which $M$ fibers over a circle with fiber $F$ is particularly relevant.  We will stay in that case throughout the remainder of this section (as always, restricting to the case that $M$ and $F$ are orientable) and use the following notation:
\begin{enumerate}
\item $h: F \to F$ is the monodromy homeomorphism of $M$.
\item $c$ is an essential simple closed curve in $F$.
\item $F'$ is the surface obtained by compressing $F$ along $c$
\item $M_{surg}$ is the manifold obtained by doing surgery on $M$ along $c \subset F \subset M$ using the framing given by $F$.
\end{enumerate}
Note that $F'$ may be disconnected, even if $F$ is connected.

\begin{prop}  \label{prop:isotopic1}  Suppose $h(c)$ is isotopic to $c$ in $F$
\begin{itemize}
\item If $c$ is non-separating in $F$, or if $c$ is separating and the isotopy from $h(c)$ to $c$ reverses orientation of $c$, then $M_{surg} \cong N \# (S^1 \times S^2)$, where $N$ fibers over the circle with fiber $F'$.
\item If $c$ separates $F$ so $F' = F_a \cup F_b$, and the isotopy from $h(c)$ to $c$ preserves orientation of $c$, then $M_{surg} \cong M_a \# M_b$, where $M_a$ (resp. $M_b$) fibers over the circle with fiber $F_a$ (resp $F_b$).
\end{itemize}
\end{prop}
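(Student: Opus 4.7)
The plan is to apply the Surgery Principle (Lemma~\ref{lemma:surgprin}) and produce a distinguished 2-sphere in $M_{surg}$ whose separation properties deliver the claimed connected-sum decomposition. First, using the hypothesis that $h(c)$ is isotopic to $c$, I replace $h$ within its isotopy class by a homeomorphism, still called $h$, with $h(c) = c$ setwise; this $h$ then descends to a homeomorphism $h' \colon F' \to F'$.

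Next I apply the Surgery Principle: cutting $M$ along $F$ gives $F\times I$, with the identification recovering $M$ being the monodromy $h$; attaching 2-handles $H_0, H_1$ along $c\times\{0\}, c\times\{1\}$ produces a manifold $W$ with $\partial W = F'\sqcup F'$, and re-gluing via $h'$ yields $M_{surg}$. Inside $W$ I focus on the 2-sphere
\[
S := D_0 \,\cup\, (c\times I) \,\cup\, D_1,
\]
where $D_i$ is the core disk of $H_i$ and $c\times I \subset F\times I$ is the vertical annulus through $c$. Since $S$ lies in the interior of $W$, it descends to an embedded 2-sphere in $M_{surg}$. The local structure is transparent: the region $W' := H_0 \cup (\eta(c)\times I) \cup H_1$ is homeomorphic to $S^2 \times I$, with $S$ a central parallel sphere splitting $W'$ into two halves $W_+, W_-$ joined to the complement $(F\setminus\eta(c))\times I$ along the two annuli $c_+\times I$ and $c_-\times I$, where $c_\pm$ denote the two boundary circles of the annular neighborhood $\eta(c)\subset F$.

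Whether $S$ separates $M_{surg}$ is then a short case analysis. If $c$ is non-separating, $(F\setminus\eta(c))\times I$ is connected and bridges $W_+$ to $W_-$, so $S$ is non-separating already in $W$, and hence in $M_{surg}$. If $c$ separates with $F\setminus c = F_a\sqcup F_b$, then $(F\setminus\eta(c))\times I$ has two components meeting $W_+$ and $W_-$ respectively; after the $h'$-identification these remain in distinct components of $M_{surg}\setminus S$ precisely when $h$ preserves the orientation of $c$, and otherwise fuse, making $S$ non-separating. Thus $S$ is non-separating exactly under the hypotheses of the first bullet of the proposition, and separating in those of the second.

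Finally, to identify the summands as the advertised mapping tori, I cut $M_{surg}$ along $S$ and cap the resulting 2-sphere boundaries with 3-balls. At the level of $W$, this replaces the $S^2\times I$ region $W'$ with two 3-balls $B^+, B^-$ attached to $(F\setminus\eta(c))\times I$ along $c_+\times I$ and $c_-\times I$. The key verification is that each $B^\pm$, paired with its attaching annulus, is homeomorphic to the product disk-cylinder $D_\pm\times I$ with $I$-coordinate matching the fibration's $t$-direction. Granted this, the capped $W$ becomes $F'\times I$ when $S$ was non-separating, or $F_a\times I \sqcup F_b\times I$ when $S$ was separating, and re-gluing by $h'$ realises $N$ (resp.\ $M_a$ and $M_b$) as the mapping torus of $h'$ on $F'$ (resp.\ $h|_{F_a}, h|_{F_b}$). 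I expect this product-structure compatibility at the capping step to be the most delicate point, since it is the one place where the two 2-handles plus the vertical annulus must collectively be recognised as a trivial $I$-bundle once $S$ is capped off; the remainder of the argument is a direct application of Lemma~\ref{lemma:surgprin} and elementary connectedness on $W\setminus S$.
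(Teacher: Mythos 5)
Your argument is correct, and it reaches the conclusion by a different decomposition than the paper's. The paper also normalizes to $h(c)=c$, but then cuts $M$ along the mapping torus of $c$ under the monodromy --- a \emph{vertical} torus $T$ (or, when $h|c$ reverses orientation, a one-sided Klein bottle) --- and recognizes the surgery as a Dehn filling of the resulting $F'$-fibered piece along the boundary slope, followed by removing balls and identifying spheres; the connected-sum structure then falls out of that last identification. You instead cut along the \emph{horizontal} fiber, as in Lemma~\ref{lemma:surgprin} and Example~\ref{example:double}, and exhibit the reducing sphere explicitly as $D_0\cup(c\times I)\cup D_1$. What your route buys is uniformity: the orientation-reversing case needs no Klein bottle, since the identification by $h'$ only happens at $\partial W$ and your sphere lives in the interior; what the paper's route buys is that the summands are visibly $F'$-bundles the moment the solid tori are filled in, whereas you must verify the product-structure claim at the capping step (which you correctly flag, and which does hold: a $3$-ball meeting the rest along a single boundary annulus $c_\pm\times I$ admits an essentially unique $D^2\times I$ structure extending the given one on that annulus, with the two complementary boundary disks being exactly the disks of $F'_0$ and $F'_1$ already present in $\partial W$). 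One small point you assert without justification: when $c$ separates, the two components of $W\setminus S$ fuse under the $h'$-identification exactly when $h$ reverses the orientation of $c$. This needs the observation --- which the paper makes explicitly --- that $h$ is orientation-preserving on $F$ (as $M$ is orientable), so $h$ preserves the coorientation of $c$, hence the two sides of $c$, if and only if it preserves the orientation of $c$; supply that line and the proof is complete.
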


\begin{proof}  We may as well assume that $h(c) = c$ and consider first the case where $h|c$ is orientation preserving.  In this case, the mapping torus of $c$ in $M$ is a torus $T$ containing $c$.  The $3$-stage process of Lemma \ref{lemma:surgprin} then becomes:
\begin{enumerate}
\item $M$ is cut along $T$ to give a manifold $M_-$ with two torus boundary components $T_1, T_2$.  $M_-$ fibers over the circle with fiber a twice-punctured $F'$. ($F'$ is connected if and only if $c$ is non-separating.)
\item A $2$-handle is attached to each torus boundary component $T_i$, turning the boundary of $M_-$ into two $2$-spheres.
\item The two $2$-spheres are identified.
\end{enumerate}
The second and third stage together are equivalent to filling in a solid torus along each $T_i$, giving an $F'$-fibered manifold $M'$, then removing a $3$-ball from each solid torus and identifying the resulting $2$-spheres.  Depending on whether $F'$ is connected or not, this is equivalent to either adding $S^1 \times S^2$ to $M'$ or adding the two components of $M'$ together.

The case in which $h|c$ is orientation reversing is only slightly more complicated.  Since $M$ is orientable, the mapping torus of $h|c$ is a $1$-sided Klein bottle $K$, so $\bdd(M - \eta(K))$ is a single torus $T$.  The argument of Lemma \ref{lemma:surgprin} still mostly applies, since $c$ has an annulus neighborhood in $K$, and shows that the surgery can be viewed as attaching two $2$-handles to $T$ along parallel curves, converting the boundary into two $2$-spheres, then identifying the $2$-spheres.  This is again equivalent to filling in a solid torus at $T$ (which double-covers $K$) and then adding $S^1 \times S^2$.  But filling in a solid torus at $T \subset (M - \eta(K))$ changes the fiber from $F$ to $F'$.  (Note that if $c$ separates $F$, so $F' = F_a \cup F_b$, then since $h$ is orientation preserving on $F$ but orientation reversing on $c$, $h$ must exchange $F_a$ and $F_b$.  So $N$ also fibers over the circle with fiber $F_a \cong F_b$.)
\end{proof}
%

\begin{cor}  \label{cor:isotopic2} If $M_{surg} \cong \#_{2} (S^{1} \times S^{2})$ and $h(c)$ is isotopic to $c$ in $F$, then $F$ is a torus.
\end{cor}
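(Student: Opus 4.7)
The plan is to apply Proposition~\ref{prop:isotopic1} and, in each of its two cases, combine Kneser--Milnor prime decomposition with the long exact sequence of homotopy groups of a fibration over $S^1$ to rule out every possibility except $F$ being a torus.

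In the first bullet $M_{surg} \cong N \# (S^1 \times S^2)$ with $N$ fibering over $S^1$ with fiber $F'$. Unique prime decomposition of $\#_2(S^1 \times S^2)$ forces $N \cong S^1 \times S^2$. Because $\pi_2(S^1) = 0$, the fibration long exact sequence gives an injection $\pi_1(F'_0) \hookrightarrow \pi_1(N) = \mathbb{Z}$, where $F'_0$ is the basepoint component of $F'$. Since the only closed orientable surface whose fundamental group embeds in $\mathbb{Z}$ is $S^2$, $F'_0 \cong S^2$; as the monodromy permutes the components of $F'$ diffeomorphically, every component is a $2$-sphere. If $c$ is non-separating then $F'$ is connected, so $F' \cong S^2$ and, since compressing along $c$ drops the genus of $F$ by one, $F$ must be a torus, as claimed. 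If instead $c$ is separating (the orientation-reversing sub-case of this bullet), $F' \cong S^2 \sqcup S^2$, forcing $F \cong S^2$ and contradicting the essentiality of $c$.

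In the second bullet $M_{surg} \cong M_a \# M_b$ with each $M_i$ fibering over $S^1$ with connected fiber $F_i$. Neither $M_i$ can be $S^3$, since $\pi_1(S^3) = 1$ cannot surject onto $\pi_1(S^1) = \mathbb{Z}$; so each $M_i$ has at least one prime summand, and the prime decomposition of $\#_2(S^1 \times S^2)$ then forces each $M_i$ to have exactly one prime summand, namely $S^1 \times S^2$. The same $\pi_1$-injection argument used in the first bullet then forces $F_a \cong F_b \cong S^2$, whence $F \cong S^2$ --- again contradicting the essentiality of $c$. The decisive step throughout is the $\pi_1$-injection from the fibration long exact sequence; the rest is routine bookkeeping with prime decomposition.
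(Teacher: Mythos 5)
Your proof is correct and follows essentially the same route as the paper: apply Proposition~\ref{prop:isotopic1}, use prime decomposition to force every summand to be $S^1\times S^2$, and conclude that each fiber component must be $S^2$, whence $F\cong T^2$ (the separating cases being ruled out by essentiality of $c$). The paper leaves the step ``$S^1\times S^2$ fibers over $S^1$ with fiber a component of $F'$ forces that component to be $S^2$'' implicit, and your justification via the $\pi_1$-injection from the fibration long exact sequence is a valid way to fill it in.
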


\begin{proof}   According to Proposition \ref{prop:isotopic1}, the hypotheses imply that $S^1 \times S^2$ fibers over the circle with fiber (a component of) $F'$.  But this forces $F' \cong S^2$ and so $F \cong T^2$. \end{proof}

Surgery on fibered manifolds also provides a natural connection between the surgery principle and Heegaard theory:

\begin{defin}  Suppose $H_1$, $H_2$ are two copies of a compression body $H$ and $h: \bdd_+ H  \to \bdd_+ H$ is a given homeomorphism.  Then the union of $H_1$, $H_2$ along their boundaries, via the identity on $\bdd_- H_i$ and via $h:  \bdd_+ H_1  \to \bdd_+ H_2$, is called the {\em Heegaard double} of $H$ (via $h$).
\end{defin}

Note that the closed complement $P$ of $\bdd_- H_1 = \bdd_- H_2$ in a Heegaard double is a manifold in which $H_1$ is attached to $H_2$ by only identifying $\bdd_+ H_1$ and $\bdd_+H_2$ (via $h$).  But such an identification constitutes a Heegaard splitting $ H_1 \cup_{\bdd_+} H_2$ of $P$.

Lemma \ref{lemma:surgprin} gives this important example:

\begin{example}  \label{example:double}  For $M, F, h, c, M_{surg}$ as above, let $H$ be the compression body obtained from $F \times I$ by attaching a $2$-handle to $F \times \{ 1 \} \subset F \times I$ along $c$.  Then $M_{surg}$ is the Heegaard double of $H$ via $h$.
\end{example}

Here is the argument:  Delete from $M$ regular neighborhoods of both the fiber $F$ that contains $c$ and another fiber $\hat{F}$.  The result is two copies $(F \times I)_i, i = 1, 2$ of $F \times I$ each containing a copy $F_i$ of $F$ and a copy $\hat{F}_i$ of $\hat{F}$.  One can reconstruct $M$ by identifying $F_1$ and $F_2$ by the natural identification and identifying $\hat{F}_1$ with $\hat{F}_2$ via $h$.  Now apply the surgery principle to the surface $F_1 = F_2$ in $M$.  Each $(F \times I)_i$ becomes a compression body $H_i$ in which $\hat{F}_i$ is $\bdd_+ H_i$ and the copy $F'_i$ of $F_i$ compressed along $c$ is $\bdd_- H_i$.

Next is a sample application, using Heegaard theory.  First a preliminary lemma:

\begin{lemma} \label{lemma:singlehandle}  Suppose $H$ is a compression body built with a single $2$-handle, whose core $E$ is bounded by a simple closed curve $c \subset \bdd_+H$.  Suppose $D \subset H$ is any properly embedded disk whose boundary is essential in $\bdd_+H$.
\begin{enumerate}
\item If $c$ is separating in $\bdd_+H$ then $\bdd D$ is isotopic to $c$ in $\bdd_+H$.
\item If $c$ is non-separating in $\bdd_+H$ then either $\bdd D$ is isotopic to $c$ in $\bdd_+H$ or $D$ cuts off a solid torus from $H$ whose meridian disk is bounded by $c$.
\end{enumerate}
\end{lemma}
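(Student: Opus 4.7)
The plan is to isotope $D$ to minimize $|D\cap E|$ and induct on this quantity, with the base case handled by the product structure of $H$ cut along $E$. First, put $D$ transverse to $E$; since compression bodies are irreducible, any innermost circle of $D \cap E$ on $E$ bounds a subdisk that surgers $D$ and reduces the count (absorbing any resulting $2$-sphere into a ball). Thus $D\cap E$ reduces to a collection of arcs, and minimizing $|D \cap E|$ corresponds to placing $\partial D$ and $c$ in minimal position on $\partial_+ H$, so that there is no bigon between them on the surface.

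For the base case $D \cap E = \emptyset$, I would cut $H$ along $E$ to obtain $H' \cong \partial_- H \times I$, whose boundary consists of two copies $\partial_-^a H \sqcup \partial_-^b H$ of $\partial_- H$; here $\partial_-^b H$ is the result of compressing $\partial_+ H$ along $c$ and contains two distinguished capping disks $E^+, E^-$ (the two faces of the cut). Then $D$ lifts to a disk $\tilde D$ in the product with boundary in $\partial_-^b H \setminus (E^+\cup E^-)$. Since closed surfaces are incompressible in their interval products, $\partial \tilde D$ bounds a disk $\Delta \subset \partial_-^b H$. The proof splits by $|\Delta \cap (E^+ \cup E^-)|$: if this count is $0$ then $\partial D$ bounds a disk on $\partial_+ H$, contradicting essentiality; if it is $1$ then $\Delta \setminus E^\pm$ is an annulus which descends under the quotient identifying $E^+ \sim E^-$ to an annulus in $\partial_+ H$ cobounded by $\partial D$ and $c$, giving $\partial D \sim c$; and if it is $2$ (which requires $c$ non-separating so $\partial_-^b H$ is connected), then the sphere $\tilde D \cup \Delta$ bounds a ball in $H'$ by irreducibility, and this ball descends to a solid torus in $H$ whose meridian disk is bounded by $c$.

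For the inductive step, with $|D\cap E| > 0$ minimized, I would select an outermost arc $\alpha$ of $D \cap E$ on $E$ cutting off a subdisk $\Delta_0 \subset E$ with $\partial\Delta_0 = \alpha \cup \beta$, $\beta \subset c$, and whose interior is disjoint from $D$. Boundary-compressing $D$ along $\Delta_0$ produces two disks $D_1', D_2'$ with $|D_i'\cap E|<|D\cap E|$ and $\partial D_i' = A_i \cup \beta$, where $A_1,A_2$ are the two subarcs of $\partial D$ separated by the endpoints of $\alpha$. If some $\partial D_i'$ were inessential, it would bound a disk on $\partial_+ H$ whose innermost subdisk would be a bigon between $\partial D$ and $c$; cancelling this bigon on the surface lifts through a collar of $\partial_+ H$ to an isotopy of $D$ in $H$ reducing $|D\cap E|$, contradicting minimality. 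Hence both $\partial D_i'$ are essential, the inductive hypothesis applies, and each $D_i'$ satisfies the lemma's conclusion.

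To conclude for $D$, I would use the band-sum relation $\partial D = \partial D_1' \# \partial D_2'$ along $\beta$. A homology calculation over $\mathbb{Z}$ rules out certain combinations (for instance, two $c$-parallel summands would force $\partial D$ to have non-primitive homology class, contradicting simplicity), and in each remaining case a geometric analysis of the resulting subsurface on $\partial_+ H$ verifies that $\partial D$ is again either isotopic to $c$ or bounds a one-holed torus containing $c$, with the corresponding solid torus in $H$ reconstructed from the inductive solid tori. The main obstacle will be this final combinatorial bookkeeping, ensuring that the band-sum operation along an arc of $c$ preserves the dichotomy in all admissible combinations of the inductive outcomes.
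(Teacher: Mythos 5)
Your base case ($D\cap E=\emptyset$) is correct and is essentially the paper's entire endgame: cut along $E$ to get $\bdd_-H\times I$, use incompressibility of $\bdd_-H\times\{1\}$ to get the disk $\Delta$, and trichotomize on how many copies of $E$ it contains. The problem is the inductive step, and the gap you flag at the end is a real one, not bookkeeping. Knowing that each $\bdd D_i'$ is either isotopic to $c$ or bounds a once-punctured torus containing $c$ does not determine the isotopy class of $\bdd D = A_1\cup A_2$: two curves each isotopic to $c$ that share the subarc $\beta\subset c$ can resolve along $\beta$ to an essential curve that is neither isotopic to $c$ nor of the required form (the homology constraint only rules out the class $2[c]$, which says nothing when $c$ separates), and you would have to rule this out using minimality by an argument you have not supplied. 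Worse, the conclusion of the lemma is a statement about the disk $D$ in $H$, not just about $\bdd D$: if $D_1'$ and $D_2'$ each cut off solid tori, both of those solid tori contain $E$ as a meridian disk and hence overlap, and there is no evident way to assemble from them the solid torus that $D$ is supposed to cut off. As written, the proof is incomplete.

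The fix is that the induction is unnecessary, and the reason is the same incompressibility you already used in the base case. Suppose $D\cap E$ consists of arcs and take a subdisk $D_0$ of $D$ (not of $E$) outermost among the arcs of $D\cap E$. Then $D_0$ lies in $H-\eta(E)\cong\bdd_-H\times I$, and after pushing its arc of $D\cap E$ into the appropriate copy of $E$, its entire boundary lies in $\bdd_-H\times\{1\}$; incompressibility of that surface in the product forces $\bdd D_0$ to bound a disk there, and since $\bdd D$ meets the two copies of $\bdd E$ in the same number of points, that disk cannot contain an entire copy of $E$. It therefore furnishes an isotopy of $D$ strictly reducing $|D\cap E|$. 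So in minimal position $D$ is disjoint from $E$, the inductive step never occurs, and your base case finishes the proof. By boundary-compressing along an outermost subdisk of $E$ instead, you discarded exactly the structure (the subdisk of $D$ sitting in the product) that makes the reduction work.
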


\begin{proof} Dually, $H$ can be viewed as constructed by adding a single $1$-handle to, say, $\bdd_-H \times \{ 1 \} \subset \bdd_-H \times I$.  The co-core of the $1$-handle is the disk $E$, with boundary $c$.   A fairly standard innermost disk, outermost arc argument shows that $D$ may be isotoped to be disjoint from $E$: The central observation is that an outermost subdisk $D_0$ cut off from $D$ by $E$ will have boundary that is inessential in $\bdd_-H \times \{ 1 \}$, since $\bdd_-H \times \{ 1 \}$ is incompressible in $H - \eta(E) \cong  \bdd_-H \times I$, and the disk that $\bdd D_0$ bounds in $\bdd_-H \times \{ 1 \}$ can't contain an entire copy of $E$, since $\bdd D$ intersects both copies of $\bdd E \subset \bdd_-H \times \{ 1 \}$ in the same number of points.  (These copies of $E$ can be viewed as the two disks on which the original $1$-handle was attached.)

Once $D$ is disjoint from $E$, it lies entirely in $\bdd_-H \times I$ with $\bdd D \subset \bdd_-H \times\{ 1 \}$.  Since $\bdd_-H \times I$ has incompressible boundary, $\bdd D$ is inessential in $\bdd_-H \times\{ 1 \}$; since $\bdd D$ was essential in $\bdd_+H$ we conclude that $\bdd D$ bounds a disk in $\bdd_-H \times\{ 1 \}$ containing either one or both copies of $E$ that lie in $\bdd_-H \times\{ 1 \}$.    In the first case (which always occurs if $c$ is separating, so the copies of $E$ lie on different components of $\bdd_-H \times \{1 \}$), $D$ is parallel to $\bdd E$ in $\bdd_+(H_i)$.  In the second case, when $\bdd D$ bounds a disk in $\bdd_-H \times \{1 \}$ containing both copies of $E$, $D$ is separating and cuts off a solid torus from $H$ that contains $E$ as a meridian disk.
\end{proof}

\begin{prop}  \label{prop:monodromy} For $M, F, h, c, M_{surg}$ as above, suppose some surgery on $c$ gives a reducible manifold.  Then the surgery slope is that of $F$ and either
\begin{enumerate}
\item $h(c)$ can be isotoped in $F$ so that it is disjoint from $c$ or
\item $c \subset F$ is non-separating and $M_{surg} \cong N \# L$, where
\begin{itemize}
\item $N$ fibers over the circle with fiber $F'$ and
\item $L$ is either $S^3$ or a Lens space.
\end{itemize}
\end{enumerate}

\end{prop}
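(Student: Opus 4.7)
The plan is to first force the surgery slope to be the fiber slope, then apply Heegaard theory to the Heegaard double of Example~\ref{example:double}. For the first step, since $c$ lies in a fiber $F$ of $M$, the exterior $X = M - \eta(c)$ inherits a fibration over $S^1$ (with fiber a twice-punctured copy of $F$), and $\bdd \eta(c)$ is foliated by circles of the fiber slope. By Gabai's theorem on extending taut foliations across Dehn fillings, any filling of $X$ at a slope other than the fiber slope admits a taut foliation, hence is irreducible. So if $M_{surg}$ is reducible, the surgery slope must be that of $F$.

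With the slope determined, Example~\ref{example:double} realizes $M_{surg}$ as the Heegaard double of $H = (F \times I) \cup (\text{2-handle along }c)$ via $h$. I would write $M_{surg} = H_1 \cup H_2$, gluing $\bdd_- H_i$ by the identity (producing $F' \subset M_{surg}$) and $\bdd_+ H_i$ by $h$. Since $\bdd_- H$ is incompressible in the compression body $H$, the surface $F'$ is incompressible in $M_{surg}$; standard innermost-disk arguments therefore allow one to isotope any reducing sphere off $F'$ into the complement $P = H_1 \cup_{\bdd_+ H} H_2$. By Haken's lemma for Heegaard splittings of bounded $3$-manifolds applied to $P$, one produces an essential sphere $S = D_1 \cup D_2$, where $D_i \subset H_i$ is an essential disk and $h(\bdd D_1) = \bdd D_2$ in $\bdd_+ H$.

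Then Lemma~\ref{lemma:singlehandle} is applied to $D_1$ and $D_2$. If both $\bdd D_i$ are isotopic to $c_i$ in $\bdd_+ H$, then the Heegaard identification forces $h(c)$ to be isotopic to $c$ in $F$; after a small pushoff one obtains conclusion~(1). Otherwise, one of the $D_i$ must cut off a solid torus from $H_i$ (only possible when $c$ is non-separating). The boundary of such a disk is separating in $\bdd_+ H$, while a disk boundary isotopic to $c$ is non-separating; because $h$ respects the separating/non-separating dichotomy, both $D_i$ must then cut off solid tori $V_i$, each with $c$ as meridian. The sphere $S = D_1 \cup D_2$ now decomposes $M_{surg}$ as a connected sum $N \# L$. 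On the $V_1 \cup V_2$ side, the two solid tori are glued along the once-punctured tori $\Sigma_i = \bdd V_i \cap \bdd_+ H_i$ (via $h$) and capped with a $3$-ball along $S$; absorbing the two halves of the ball into the $V_i$, $L$ becomes two solid tori glued along their full torus boundaries, hence $S^3$ or a Lens space. The complementary summand $N$, identified via Lemma~\ref{lemma:surgprin} as a closed manifold built from $M$ by an alternative surgery compatible with the original fibration, fibers over $S^1$ with fiber $F'$.

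The main obstacle is the middle paragraph: verifying carefully that a reducing sphere can be moved off $F'$ (which is non-separating in $M_{surg}$) and that Haken's Lemma applies cleanly to the Heegaard splitting of the bounded manifold $P$. The explicit identification of $N$ as an $F'$-bundle over $S^1$ in the final case is a further subtlety that requires unpacking the surgery-principle description to track how the fibration of $M$ descends after removal of $V_1 \cup V_2$.
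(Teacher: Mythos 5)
Your overall architecture (pin down the surgery slope, then pass to the Heegaard double of Example~\ref{example:double} and invoke Lemma~\ref{lemma:singlehandle}) matches the paper's, but the middle step contains a genuine error on which the rest of your argument depends. You assert that $F'$ is incompressible in $M_{surg}$ because $\partial_- H$ is incompressible in the compression body $H$. Incompressibility in each compression body does not imply incompressibility in the glued-up manifold: a compressing disk for $F'$ need not lie on one side of the Heegaard surface. In fact $F'$ is compressible in exactly the case that matters for the paper: when $M_{surg}\cong\#_{2}(S^{1}\times S^{2})$ and $F$ has genus two, $F'$ is a torus in a manifold with free fundamental group, hence automatically compressible. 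Consequently a reducing sphere cannot always be isotoped off $F'$ (an innermost intersection circle that is essential in $F'$ yields a compressing disk, not a reduction of intersections), the complement $P=M_-$ need not be reducible, and Haken's lemma does not apply. What one actually gets, by taking a reducing sphere minimizing its intersection with $F'$, is that either $M_-$ is reducible or $F'$ compresses in $M_-$; in both cases the splitting $H_1\cup_{\hat{F}}H_2$ is only \emph{weakly reducible} in the sense of Casson and Gordon \cite{CG}. That is the tool the paper uses: it produces disjoint essential curves $\alpha_1,\alpha_2\subset\hat{F}$ bounding disks in $H_1$ and $H_2$ respectively, with no requirement that $\alpha_1=\alpha_2$.

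This matters for the conclusion, not just the method. With a single reducing sphere $D_1\cup D_2$, your case analysis can only ever output ``$h(c)$ isotopic to $c$'' or conclusion (2); it never produces the generic form of conclusion (1), in which $h(c)$ is disjoint from but not isotopic to $c$. That case arises precisely from two distinct disjoint curves $\alpha_1\simeq h(c)$ and $\alpha_2\simeq c$, i.e., from weak reducibility, and the paper's three-case analysis (tracking which of the $\alpha_i$ are separating, and whether the punctured tori they cut off are disjoint or coincide) is what handles it. A smaller point: your justification of the first step is off, since the exterior $M-\eta(c)$ of a curve lying in a fiber is not itself fibered over $S^1$ (the projection fails to be locally trivial near the fiber containing $c$). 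The conclusion --- that only the fiber slope can yield a reducible manifold --- is correct, but it requires the sutured-manifold/taut-foliation machinery as packaged in \cite[Corollary 4.2]{ST}, which is what the paper cites. Finally, note that the paper also treats the case $F'\cong S^2$ separately before the weak-reduction argument; your write-up does not address it.
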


Note in particular that possibility (2) is not consistent with $M_{surg} \cong \#_{2} (S^{1} \times S^{2})$.

\begin{proof}   Choose distinct fibers $\hat{F}, F$ in $M$, with $c \subset F$.  Via \cite[Corollary 4.2]{ST} and the proof of Theorem \ref{thm:main} we know that the surgery on $c$ must use the framing given by the fiber $F$, so the result of surgery is $M_{surg}$.   Example \ref{example:double} shows that $M_{surg}$ is a Heegaard double via $h$, so the complement $M_- = M_{surg} - \eta(F')$ of a regular neighborhood of $F' = \bdd_- H$ has a Heegaard splitting $H_1 \cup_{\hat{F}} H_2$.  That is,  $\hat{F} = \bdd_+ H_1 = \bdd_+ H_2$.

If $F' \cong S^2$, so $F \cong T^2$, then the compression bodies $H_1$ and $H_2$ in the Heegaard splitting of $M_-$ are each punctured solid tori and $M_{surg}$ is obtained from $M_-$ by identifying the $2$-spheres at the punctures.  Hence $M_{surg}$ is the connected sum of $S^1 \times S^2$ with a closed $3$-manifold of Heegaard genus one, either $S^3$, a Lens space, or $S^1 \times S^2$.  But the last happens only if the same curve in $\hat{F}$ compresses in both $H_1$ and $H_2$; in our context, that implies $c$ and $h(c)$ are isotopic in $F$, and so can be isotoped to be disjoint.

If $F' \ncong S^2$, choose a reducing sphere for $M_{surg}$ with a minimal number of intersection curves with $F'$.  If the reducing sphere is disjoint from $F'$, then $M_-$ is reducible. If, on the other hand, the reducing sphere intersects $F'$, then at least one copy of $F'$ in $\bdd M_-$ must be compressible in $M_-$.  We conclude that in either case the Heegaard splitting $H_1 \cup_{\hat{F}} H_2$ of $M_-$ is weakly reducible (and possibly reducible), see \cite{CG}.  That is, there are essential disjoint simple closed curves $\aaa_1, \aaa_2$ in $F = \bdd_+ H_i$ which compress respectively in $H_1$ and $H_2$.

 \medskip

{\bf Case 1:}  The curve $c$ is separating.

In this case, since the compression bodies $H_i$ each have only the $2$-handle with attaching circle $c \subset F$ attached, it follows from Lemma \ref{lemma:singlehandle} that any curve in $\bdd _+ H_i = \hat{F}$ that compresses in $H_i$ is isotopic to $c \subset \bdd_+ H_i \cong F$.  In particular, fixing the identification $\hat{F} = \bdd_+ H_2$, $\aaa_2$  must represent $c$ in $\hat{F}$ and $\aaa_1$ represents $h(c)$.  Hence $c$ and $h(c)$ are disjoint.

\medskip

{\bf Case 2:}  The curve $c$ is non-separating, and so is at least one of the curves $\aaa_1, \aaa_2$.

If both curves $\aaa_i$ are non-separating then it follows from Lemma \ref{lemma:singlehandle} that both $\aaa_1$ and $\aaa_2$, when viewed in the handlebodies $H_1, H_2$, must each be isotopic to $c \subset \bdd_+ H_i \cong F$ and the case concludes as Case 1 did.

If $\aaa_2$ is non-separating, and $\aaa_1$ is separating, then it follows from Lemma \ref{lemma:singlehandle} that $\aaa_2$  is isotopic to $c \subset \bdd_+ H_2 = \hat{F}$ whereas $\aaa_1$ bounds a punctured torus $T \subset \bdd_+ H_2$ on which $h(c)$ lies.  If $\aaa_2$ is disjoint from $T$, then $c$ and $h(c)$ are disjoint, as required.  If $\aaa_2$ lies in $T$ then $\bdd T$ also bounds a disk in $H_2$.   That is, each $H_i$ can be viewed as the boundary sum, along a disk whose boundary is $\bdd T$ of a copy of $F' \times I$ and a solid torus.  When the $\hat{F}_i$ are identified to get $M_-$, the union of the disks in $H_1$ and $H_2$ bounded by $\bdd T$ constitute a sphere that decomposes $M_-$ into $F' \times I \# W$, where $W$ is Heegaard split by $T$ into two solid tori, with meridian disks bounded by $c$ and $h(c)$ respectively.  $M_{surg}$ is then obtained by identifying the two copies of $F'$ in $\bdd M_-$.  Hence $M_{surg} \cong N \# W$, where $N$ fibers over $S^1$ with fiber $F'$ .  If $|c \cap h(c)| > 1$ then $W$ is a Lens space. If $|c \cap h(c)| = 1$ then $W = S^3$.  If $|c \cap h(c)| = 0$ then $h(c)$ is disjoint from $c$.

\medskip

{\bf Case 3:} The curve $c$ is non-separating, but both $\aaa_1, \aaa_2$ are separating.

In this case, much as in Case 2, each $\aaa_i$ cuts off a torus $T_i$ from $\bdd_+ H_2 = \hat{F}$, with $c \subset T_2$ and $h(c) \subset T_1$.  Since the $\aaa_i$ are disjoint, the two tori either also are disjoint (and the proof is complete) or the two tori coincide.  If the two tori coincide, the argument concludes as in Case 2.  \end{proof}

\section{Could there be fibered counterexamples of genus $2$?}

In applying Proposition \ref{prop:monodromy} to the manifold $M$ obtained from $0$-framed surgery on a fibered knot $K \subset S^3$, note that the isotopy in the Proposition takes place in a fiber $F$ of $M$, the closed manifold obtained by $0$-framed surgery on $K$, not in the fiber $F - \{ point \}$ of the knot $K$ itself.   The importance of the distinction is illustrated by the following Proposition which, without the distinction, would (following Propositions \ref{prop:unknot}, \ref{prop:monodromy} and Corollary \ref{cor:genustwo}(1)) seem to guarantee that all genus $2$ fibered knots have Property~2R.

\begin{prop}  \label{prop:strictmono} Suppose $U \subset S^3$ is a fibered knot, with fiber the punctured surface $F_- \subset S^3$ and monodromy $h_-: F_- \to F_-$.  Suppose a knot $V \subset F_-$ has the property that $0$-framed surgery on the link $U \cup V$ gives $\#_{2} (S^{1} \times S^{2})$ and $h_-(V)$ can be isotoped to be disjoint from $V$ in $F_-$.  Then either $V$ is the unknot or $genus(F_-) \neq 1, 2$.  \end{prop}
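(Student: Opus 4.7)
The plan is to close up $F_-$ to $F$ by capping $\partial F_-$ with a disk $D$, extend $h_-$ to $h\colon F\to F$ by the identity on $D$, and run the machinery of the preceding propositions on the closed fiber. Let $M$ be $0$-surgery on $U$; then $M$ fibers over $S^1$ with fiber $F$ and monodromy $h$, $0$-surgery on $V\subset F\subset M$ along the fiber slope gives $M_{\mathrm{surg}}\cong\#_2(S^1\times S^2)$, and $V,h(V)$ are disjoint in $F$ and both disjoint from $D$. By Proposition \ref{prop:unknot} I may assume $U$ is not the unknot. I may also assume $V$ is essential in $F$: otherwise $V$ bounds a disk in $F$ which either lies in $F_-$ (so $V$ is the unknot) or contains $D$ (so $V$ is parallel to $\partial F_-$ in $F$, equivalently a $0$-framed push-off of $U$ in $S^3$; then $M_{\mathrm{surg}}\cong M\#(S^1\times S^2)$, which by Property R forces $U$ to be the unknot, contradicting the assumption). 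Corollary \ref{cor:isotopic2} now does the heavy lifting: if $V$ and $h(V)$ are isotopic in $F$, then $F\cong T^2$.

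\textbf{Genus $1$.} If $\mathrm{genus}(F_-)=1$ then $F=T^2$, and two disjoint essential simple closed curves on a torus are automatically isotopic, so $h_*[V]=[V]$ in $H_1(F)=\mathbb{Z}^2$. Since $U$ must be the trefoil or the figure-eight, $h_*$ has characteristic polynomial either $\lambda^2-\lambda+1$ (the period-$6$ case) or $\lambda^2-3\lambda+1$ (the Anosov case); in neither is $1$ an eigenvalue. Hence $[V]=0$, contradicting the fact that an essential simple closed curve on the torus is primitive in $H_1$. So in genus $1$, $V$ must be the unknot.

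\textbf{Genus $2$.} If $\mathrm{genus}(F_-)=2$, Corollary \ref{cor:isotopic2} says $V$ and $h(V)$ are disjoint but non-isotopic in $F$. I claim $V$ is non-separating: if $V$ separated $F$, so would $h(V)$, and $h(V)$ in one of the two $S_{1,1}$ pieces of $F\setminus V$ must be non-boundary-parallel (else isotopic to $V$), hence non-separating in $S_{1,1}$ (since essential non-peripheral curves in a once-punctured torus are non-separating), hence non-separating in $F$ -- a contradiction. I next claim $V\cup h(V)$ does not separate $F$: $h(V)$ is essential and non-boundary-parallel in $F|V\cong S_{1,2}$, and if $h(V)$ separated $S_{1,2}$ the split would be $S_{0,3}\cup S_{1,1}$ with both copies of $V$ on the $S_{0,3}$ side; regluing $V_+$ to $V_-$ would make $h(V)$ separating in $F$, contradicting the previous claim. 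Conclude $F\setminus(V\cup h(V))\cong S_{0,4}$, with the cap $D$ sitting inside.

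\textbf{Finishing genus $2$ (main obstacle).} To conclude $V$ is the unknot in $S^3$ I would use the Heegaard-double description of $M_{\mathrm{surg}}$ (Example \ref{example:double}): $M_{\mathrm{surg}}$ is the double of the compression body $H=F\times I\cup(2\text{-handle at }V)$ via $h$, with $\partial_- H\cong T^2$. As in the proof of Proposition \ref{prop:monodromy}, reducibility of $\#_2(S^1\times S^2)$ forces weak reducibility of the induced Heegaard splitting of $M_{\mathrm{surg}}-\eta(F')$; Lemma \ref{lemma:singlehandle} then constrains the disjoint compressing curves $\alpha_1,\alpha_2\subset\hat F$ to each be either isotopic to (the image of) $V$ or $h(V)$, or to bound a disk cutting off a solid torus with meridian $V$ or $h(V)$. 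Using the specific location of $\partial F_-=\partial D$ within the $S_{0,4}$-complement of $V\cup h(V)$ -- i.e., that the isotopy of $h_-(V)$ to a curve disjoint from $V$ happens inside $F_-$ rather than crossing $D$ -- I would rule out the lens-space branches of Proposition \ref{prop:monodromy} in genus $2$ and extract a compressing disk for $V$ in $S^3\setminus\eta(U)$, proving $V$ is the unknot. The main obstacle is precisely this final combinatorial bookkeeping: it is the placement of $D$ (the $F$-versus-$F_-$ distinction) that filters out the otherwise-allowed non-unknot configurations left open by the weaker isotopy condition of Proposition \ref{prop:monodromy}.
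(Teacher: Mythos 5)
Your reductions are sound: the inessential cases, the genus-one case (your eigenvalue argument via the Alexander polynomial is a legitimate alternative to the paper's observation that an invariant essential curve would sweep out a non-separating closed surface in $S^3$ — though note the isotopy from $h(V)$ to $V$ may reverse orientation, so you should rule out $-1$ as well as $+1$ as an eigenvalue), and the genus-two facts that $V$ is non-separating and $V\cup h(V)$ does not separate $F$ all match the paper. But the step you flag as "the main obstacle" is a genuine gap, and the route you sketch for it will not close. The Heegaard-double machinery of Proposition~\ref{prop:monodromy} and Lemma~\ref{lemma:singlehandle} lives entirely in the closed fiber $F$ of $M$, and its conclusions are already fully consistent with $V$ being knotted in $S^3$: Corollary~\ref{cor:genustwo} characterizes exactly the curves satisfying those closed-fiber conditions, and the examples $V_n$ of Section~\ref{sect:nonstandard} satisfy all of them while being connected sums of torus knots. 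So no amount of bookkeeping with the compressing disks $\alpha_1,\alpha_2$ can "extract a compressing disk for $V$ in $S^3\setminus\eta(U)$"; you correctly sense that the placement of $D$ must be used, but that machinery has no slot for it.

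The paper's actual finishing move is different and uses the $F_-$ hypothesis directly by reversing the roles of $U$ and $V$. Since $V$ and $h_-(V)$ are disjoint \emph{in $F_-$}, non-isotopic in $F$, and both non-separating, compressing $F_-$ along both simultaneously yields a disk (you essentially proved this: the complement of $V\cup h(V)$ in $F_-$ is a five-holed sphere). By the Surgery Principle (Lemma~\ref{lemma:surgprin}) applied to $V\subset F_-$, both compressions become available after $0$-framed surgery on $V$ alone — the second because $h_-(V)$ is carried to $V$ by flowing once around the fibration of $S^3-\eta(U)$. Hence $U$ bounds a disk in $N$, the result of $0$-surgery on $V$, so $0$-surgery on $U\cup V$ is $N\#(S^1\times S^2)$. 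For this to be $\#_2(S^1\times S^2)$ forces $N\cong S^1\times S^2$, and classical Property~R applied to $V$ shows $V$ is the unknot. This is the idea missing from your proposal; everything needed to execute it is already present in your genus-two setup.
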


\begin{proof}  {\em Case 1:} $V$ bounds a disk in $F_-$ or is parallel in $F_-$ to $\bdd F_- = U$.

In this case, $0$-framed surgery on $U \cup V$ would be $N \# S^1 \times S^2$, where $N$ is the result of $0$-framed surgery on $U$.  Our hypothesis is that $N \cong S^1 \times S^2$ which, by classical Property R \cite{Ga1}, implies that $U$ is the unknot.  Hence $genus(F_-) = 0$.

\bigskip

{\em Case 2:} $V$ is essential in $F_-$.

If $F_-$ is a punctured torus, then the fact that $V$ is essential and $h_-(V)$ can be isotoped off of $V$ imply that $h_-(V)$ is isotopic to $V$, and we may as well assume that $h_-(V) = V$.  The mapping torus of $h_-|V$ is then a non-separating torus in $S^3$, which is absurd.

Suppose $F_-$ is a punctured genus $2$ surface, and let $F$ denote the closed surface obtained by capping off the puncture.  We may as well assume that $h_-(V) \cap V = \emptyset$, and, following Corollary \ref{cor:isotopic2}, $h(V)$ is not isotopic to $V$ in $F$.  In particular, $V$ must be non-separating.   Since $V$ and $h(V)$ are non-separating and disjoint in $F_-$, but not isotopic in $F$, if $F_-$ is compressed along both $V$ and $h(V)$ simultaneously, $F_-$ becomes a disk.  Apply the Surgery Principle Lemma \ref{lemma:surgprin} to $V$ and conclude that $U \subset S^3$ bounds a disk after $0$-framed surgery on $V$.  In particular, if $N$ is the $3$-manifold obtained by $0$-framed surgery on $V$ alone, then surgery on $U \cup V$ would give $N \# S^1 \times S^2$.  For this to be $\#_{2} (S^{1} \times S^{2})$ would require $N \cong S^1 \times S^2$ hence, again by classical Property R, $V \subset S^3$ would be the unknot.
\end{proof}

Return to the general case of fibered manifolds and surgery on a curve $c$ in the fiber, and consider the case in which the fiber has genus two.  According to Corollary \ref{cor:isotopic2}, if the result of surgery on $c$ is $\#_{2} (S^{1} \times S^{2})$, then $h(c)$ is not isotopic to $c$ in $F$.   The following Proposition is a sort of weak converse.

\begin{prop}  \label{prop:nonisotopic} For $M, F, h, c, M_{surg}$ as above, suppose $F$ has genus $2$ and $h(c)$ can be isotoped off of $c$ in $F$.
If $h(c)$ is not isotopic to $c$ in $F$ then $M_{surg} \cong L \# S^1 \times S^2$, where $L$ is $S^3$, $S^1 \times S^2$, or a Lens space.
\end{prop}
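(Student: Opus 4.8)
The plan is to exploit the Heegaard-double description of $M_{surg}$ from Example~\ref{example:double}, after first pinning down the configuration of the disjoint curves $c$ and $d:=h(c)$ on the genus-two surface $F$. First I would show that $c$ must be non-separating. Since $h$ is a homeomorphism, $d$ is separating exactly when $c$ is; but on a genus-two surface a separating curve splits $F$ into two once-punctured tori, and a second, disjoint separating curve lies in one of these punctured tori, where it is necessarily boundary-parallel and hence isotopic to $c$ --- contrary to hypothesis. Thus $c$ and $d$ are non-separating, so $F'$ (the result of compressing $F$ along $c$) is a torus. I would then check that the image $\bar d$ of $d$ in $F'$ is essential: were it inessential, then, accounting for the two disks that replace $c$ under the compression, $d$ would be inessential, isotopic to $c$, or separating in $F$, all of which are excluded. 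Hence $\{c,d\}$ is a non-separating pair (its complement in $F$ is a four-holed sphere) and $\bar d$ is essential on the torus $F'$.

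The key step is to produce a non-separating $2$-sphere in $M_{surg}$. By Example~\ref{example:double}, $M_{surg}$ is the Heegaard double of the genus-two compression body $H$ via $h$, so $M_- = M_{surg}-\eta(F')$ carries the genus-two splitting $H_1\cup_{\hat F} H_2$ in which the meridian of $H_2$ is $c$ while the meridian of $H_1$, carried onto $\hat F$ by the gluing $h$, is $d$. Since $c$ and $d$ are disjoint, I would compress $\hat F$ simultaneously along $c$ into $H_2$ and along $d$ into $H_1$; because $\{c,d\}$ is a non-separating pair, compressing the genus-two surface first along $c$ and then along $\bar d$ yields a connected genus-zero surface, so the result $S^*$ is a single $2$-sphere. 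Now $\hat F$ is non-separating in $M_{surg}$ (deleting it leaves $H_1\cup_{F'} H_2$, which is connected), so $[\hat F]\neq 0$ in $H_2(M_{surg};\zed/2)$; compression does not alter this class, so $[S^*]=[\hat F]\neq 0$ and $S^*$ is a non-separating sphere. A non-separating $2$-sphere splits off an $S^1\times S^2$ summand, giving $M_{surg}\cong M_0\,\#\,(S^1\times S^2)$.

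It remains to identify $M_0$ as $S^3$, a lens space, or $S^1\times S^2$, and this last step carries the real content. The plan is to show that excising the $S^1\times S^2$ summand reduces each genus-two compression body $H_i$ to a genus-one one, so that $M_0$ is the Heegaard double of a solid torus --- that is, two solid tori glued along a torus, which is precisely $S^3$, a lens space, or $S^1\times S^2$. This should follow by untelescoping the weakly reducible splitting $H_1\cup_{\hat F} H_2$ along the disjoint meridian pair $\{c,d\}$, in the spirit of \cite{CG} and of the case analysis in Proposition~\ref{prop:monodromy}; the three cases for $L$ are then governed by the geometric intersection number on the torus $F'$ of the meridians of these two solid tori. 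Note that, in contrast to Proposition~\ref{prop:monodromy}, here $|c\cap d|=0$, and it is exactly this disjointness that forces the extra $S^1\times S^2$ factor. I expect the main obstacle to be precisely this bookkeeping: verifying, via Lemma~\ref{lemma:singlehandle} to control which curves bound disks in each $H_i$, that the doubling structure genuinely descends to two genus-one compression bodies and hence that $M_0$ has Heegaard genus at most one. Granting this, $L=M_0$ is $S^3$, $S^1\times S^2$, or a lens space, as required.
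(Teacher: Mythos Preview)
Your argument is correct and is essentially the paper's --- both hinge on the weak reducibility of $H_1\cup_{\hat F}H_2$ coming from the disjoint meridians $c$ and $d=h(c)$ --- but the paper organizes the weak reduction so that your steps~3--5 collapse into one. It directly sets $H'_1=(H_1-\eta(C_1))\cup\eta(C_2)$ and $H'_2=(H_2-\eta(C_2))\cup\eta(C_1)$ (where $C_i$ is the unique meridian disk of $H_i$) and observes that each $H'_i$ is $F'\times I$ with a single $2$-handle attached at one end; since $F'$ is a torus and the attaching curve is essential (precisely your observation about $\bar d$), each $H'_i$ is a once-punctured solid torus. Thus $M_{surg}$ is two once-punctured solid tori glued along both boundary components, which is immediately $L\#(S^1\times S^2)$ with $L$ of Heegaard genus at most one. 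Your sphere $S^*$ is exactly the new surface $H'_1\cap H'_2=\partial_-H'_i$ in this picture, so the homological detour in steps~3--4 is correct but unnecessary, and the ``bookkeeping'' you anticipate in step~5 evaporates once the $H'_i$ are identified --- no appeal to Lemma~\ref{lemma:singlehandle} or to the case analysis of Proposition~\ref{prop:monodromy} is needed.
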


\begin{proof}  We may as well assume that $h(c)$ is disjoint from $c$ but not isotopic to $c$ in $F$.  Since $F$ is genus two, this  immediately implies that $c$ is non-separating.

Take the Heegaard viewpoint of Example \ref{example:double}. The complement $M_-$ of a regular neighborhood of  $F'$ in $M_{surg}$ has a Heegaard splitting $H_1 \cup_{\hat{F}} H_2$, with the splitting surface $\hat{F}$ a fiber not containing $c$.  Since $h(c)$ can be isotoped off of $c$ in $\hat{F}$, the Heegaard splitting is a weakly reducible splitting, with $c \subset \hat{F}_2 = \bdd_+ H_2$ bounding a disk in $H_2$ and $h(c) \subset \hat{F}_2$ bounding a disk in $H_1$.

Now do a weak reduction of this splitting.  That is, consider the $2$-handles $C_2 \subset H_2$ with boundary $c \subset \hat{F}_2 = \bdd_+ H_2$ and $C_1 \subset H_1$ with boundary $h(c)$. Since $c$ and $h(c)$ are disjoint, $M_-$ can also be regarded as the union of compression bodies $H'_2 = (H_2 - \eta(C_2)) \cup \eta(C_1)$ and $H'_1 = (H_1 - \eta(C_1)) \cup \eta(C_2)$.  Each $H'_i$ can be regarded as obtained from $F' \times I$ by attaching a single $2$-handle; e. g. $H'_1$ is obtained from $F' \times I$ by attaching the $2$-handle $\eta(C_2)$ at one end.  Since $F$ was of genus two, $F'$ is a torus and so each $H'_i$ is a once-punctured solid torus.  Hence $M_{surg}$ is a Heegaard double of a once-punctured solid torus, and this is easily seen to be the connected sum of $S^1 \times S^2$ with a manifold $L$ that has a genus one Heegaard splitting.  \end{proof}

\begin{cor}  \label{cor:nonisotopic} For $M, F, h, c, M_{surg}$ as above, suppose $F$ has genus $2$ and $M_{surg}$  is reducible.

If $h(c)$ is not isotopic to $c$ in $F$ then $M_{surg} \cong L \# M'$, where $L$ is $S^3$, $S^1 \times S^2$, or a Lens space and $M'$ is either $S^1 \times S^2$ or a torus bundle over the circle.
\end{cor}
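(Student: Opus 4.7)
The plan is to combine Propositions \ref{prop:monodromy} and \ref{prop:nonisotopic} through a short case analysis keyed to whether $h(c)$ can be isotoped off of $c$ in $F$. Since $M_{surg}$ is reducible, I would begin by invoking Proposition \ref{prop:monodromy}, which delivers one of two alternatives: (a) $h(c)$ can be isotoped in $F$ to be disjoint from $c$, or (b) $c$ is non-separating in $F$ and $M_{surg}\cong N\#L$, where $N$ fibers over the circle with fiber $F'$ and $L$ is $S^3$ or a Lens space.

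In alternative (a), I would observe that the hypotheses of Proposition \ref{prop:nonisotopic} are met verbatim: $F$ has genus $2$, $h(c)$ can be isotoped off $c$, and by the standing assumption of the corollary $h(c)$ is not isotopic to $c$ in $F$. That proposition then yields $M_{surg}\cong L\#(S^1\times S^2)$ with $L$ one of $S^3$, $S^1\times S^2$, or a Lens space, so taking $M'=S^1\times S^2$ closes this branch.

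In alternative (b), I would use only that $F$ has genus $2$ and $c$ is non-separating. The compressed surface $F'$ is obtained from $F$ by excising an annular neighborhood of $c$ and capping off the two resulting boundary circles by disks; for a non-separating curve in a genus~$2$ surface this produces a torus. Hence $N$ fibers over the circle with torus fiber, i.e., $N$ is a torus bundle over $S^1$, and setting $M'=N$ completes this case. Note the $L$ supplied by Proposition \ref{prop:monodromy} already lies in the list $\{S^3,\ S^1\times S^2,\ \text{Lens space}\}$ allowed by the corollary.

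There is no real obstacle here beyond the bookkeeping of matching the hypotheses of Proposition \ref{prop:nonisotopic} to the output of Proposition \ref{prop:monodromy}(a), and confirming that in branch (b) the Euler-characteristic count on $F'$ forces a torus. Consequently, I expect the argument to be a tight paragraph once the case split is set up cleanly.
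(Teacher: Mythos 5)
Your argument matches the paper's proof essentially verbatim: both invoke Proposition \ref{prop:monodromy} to split into the two alternatives, apply Proposition \ref{prop:nonisotopic} in the first case (using the standing hypothesis that $h(c)$ is not isotopic to $c$), and in the second case observe that compressing the genus-$2$ fiber along the non-separating curve $c$ yields a torus, so $N$ is a torus bundle. The proposal is correct and requires no changes.
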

\begin{proof}  Via \cite[Corollary 4.2]{ST} and the proof of Theorem \ref{thm:main} we know that the surgery on $c$ must use the framing given by the fiber in which it lies.  Apply Proposition \ref{prop:monodromy}.  If the first conclusion holds, and $h(c)$ can be isotoped off of $c$ in $F$, then  Proposition \ref{prop:nonisotopic} can be applied and that suffices.  If the second conclusion holds then $c$ is non-separating, so $F'$ is a torus, as required. \end{proof}

\begin{cor}  \label{cor:genustwo} Suppose $U \subset S^3$ is a genus two fibered knot and $V \subset S^3$ is a disjoint knot.  Then $0$-framed surgery on $U \cup V$ gives $\#_{2} (S^{1} \times S^{2})$  if and only if  after possible handle-slides of $V$ over $U$,
\begin{enumerate}
\item $V$ lies in a fiber of $U$;
\item in the closed fiber $F$ of the manifold $M$ obtained by $0$-framed surgery on $U$, $h(V)$ can be isotoped to be disjoint from $V$;
\item $h(V)$ is not isotopic to $V$ in $F$; and
\item the framing of $V$ given by $F$ is the $0$-framing of $V$ in $S^3$.
\end{enumerate}
\end{cor}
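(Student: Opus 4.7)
The plan is to prove both directions by combining the isotopy analysis from the proof of Theorem~\ref{thm:main}, the structural results Propositions~\ref{prop:monodromy} and~\ref{prop:nonisotopic}, and a short homological calculation to pin down the reducible summand.

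For the $(\Rightarrow)$ direction, assume $0$-framed surgery on $U\cup V$ yields $\#_{2}(S^{1}\times S^{2})$, and let $M$ denote $0$-framed surgery on $U$ alone, so $M$ fibers over $S^{1}$ with closed genus-$2$ fiber $F$. Since surgery on $V\subset M$ produces the reducible manifold $\#_{2}(S^{1}\times S^{2})$, I invoke \cite[Corollary~4.2]{ST} exactly as in the proof of Theorem~\ref{thm:main}: $V$ is either in a ball, cabled, or isotopable into a fiber. The cabling alternative would yield a Lens-space summand, which is absent from $\#_{2}(S^{1}\times S^{2})$; the ball alternative would force $M$ itself as a summand, impossible by Property~R (\cite{Ga1}) since $U$ is not the unknot. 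The remaining alternative, combined with the observation that an isotopy through $U'$ corresponds in $S^{3}$ to handle-slides of $V$ over $U$, yields condition~(1). The same \cite[Corollary~4.2]{ST} tells us that the reducible surgery slope on $V\subset F$ is the fiber slope, and since in $S^{3}$ that slope must be the $0$-framing (Proposition~\ref{prop:frame}), condition~(4) follows. Condition~(3) is immediate from Corollary~\ref{cor:isotopic2}, since $F$ has genus~$2\neq 1$. Finally, apply Proposition~\ref{prop:monodromy}: its second alternative produces a summand $N$ fibering with torus fiber $F'$, i.e., an irreducible torus bundle, which cannot be a summand of $\#_{2}(S^{1}\times S^{2})$; hence condition~(2) must hold.

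For the $(\Leftarrow)$ direction, assume (1)--(4). By condition~(4) and the Surgery Principle (Lemma~\ref{lemma:surgprin}), $0$-framed surgery on $U\cup V$ equals $M_{surg}$. Conditions~(2) and~(3) put us in the hypothesis of Proposition~\ref{prop:nonisotopic}, giving $M_{surg}\cong L\#(S^{1}\times S^{2})$ with $L\in\{S^{3},\,S^{1}\times S^{2},\,\text{Lens space}\}$. To pin down $L$, observe that $V$ lies on the Seifert surface $F_{-}$ of $U$, so pushing $V$ off $F_{-}$ in the positive normal direction produces a parallel copy disjoint from $F_{-}$, giving $lk(U,V)=0$. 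Combined with the vanishing self-framings, the linking matrix of $U\cup V$ is zero, so $H_{1}(M_{surg})\cong\mathbb{Z}^{2}$. This forces $H_{1}(L)\cong\mathbb{Z}$, hence $L\cong S^{1}\times S^{2}$ and $M_{surg}\cong\#_{2}(S^{1}\times S^{2})$.

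The main obstacle is the careful bookkeeping between the closed fiber $F\subset M$ and the punctured fiber $F_{-}\subset S^{3}$: condition~(2) is an isotopy statement in $F$, not in $F_{-}$, and this is precisely the distinction already emphasized in Proposition~\ref{prop:strictmono}. A secondary delicate point is verifying~(4), that is, matching the fiber slope in $M$ with the $0$-framing inherited from $S^{3}$; this requires that any reducible surgery on an essential curve in the fiber uses the fiber slope, which is where \cite[Corollary~4.2]{ST} does the real work.
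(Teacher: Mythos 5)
Your proof is correct and follows essentially the same route as the paper: \cite[Corollary 4.2]{ST} together with the argument of Theorem~\ref{thm:main} for conditions (1) and (4), Proposition~\ref{prop:monodromy} for (2), Corollary~\ref{cor:isotopic2} for (3), and Proposition~\ref{prop:nonisotopic} plus the homology/linking-number computation to pin down $L\cong S^1\times S^2$ in the converse. You merely spell out two details the paper leaves implicit (why the ball and cabling alternatives of \cite[Corollary 4.2]{ST} are excluded, and why the second alternative of Proposition~\ref{prop:monodromy} is inconsistent with $\#_{2}(S^{1}\times S^{2})$), and both of your justifications are sound.
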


\begin{proof}  Suppose first that $0$-framed surgery on $U \cup V$ gives $\#_{2} (S^{1} \times S^{2})$.  Apply  \cite[Corollary 4.2]{ST} as in the proof of Theorem \ref{thm:main} to handle-slide $V$ over $U$ until it lies in the fiber of $U$ in a way that the $0$-framing on $V$ is the framing given by the fiber in which it lies.  Proposition \ref{prop:monodromy} shows that $h(V)$ satisfies the second condition and Corollary \ref{cor:isotopic2} gives the third: $h(V)$ is not isotopic in $F$ to $V$.

For the other direction, suppose $V$ lies in a fiber of $U$ and the four conditions are satisfied.  The last condition says that the surgery on $V$ is via the slope of the fiber.  By Proposition \ref{prop:nonisotopic}, the surgery gives $L \# S^1 \times S^2$, for $L$ either $S^3$, a Lens space, or $S^1 \times S^2$.  But $U$ and $V$ are algebraically unlinked in $S^3$ (push $V$ off of $F$), so $0$-framed surgery on $U \cup V$ must give a homology $\#_{2} (S^{1} \times S^{2})$.  This forces $L$ to be a homology $S^1 \times S^2$, hence $S^1 \times S^2$ precisely. \end{proof}

\section{The square knot in links that surger to $\#_{2} (S^{1} \times S^{2})$} \label{sect:squareknot}

The square knot $Q$ is the connected sum of two trefoil knots, $K_R$ and $K_L$, respectively the right-hand trefoil knot and the left-hand trefoil knot.   There are many $2$-component links containing $Q$ so that surgery on the link gives $\#_{2} (S^{1} \times S^{2})$.  Figure \ref{fig:squareknot} shows (by sliding $Q$ over the unknot) that the other component could be the unknot; Figure \ref{fig:squareknot2b} shows (by instead sliding the unknot over $Q$) that the second component could be quite complicated.  In this section we show that, up to handle-slides of $V$ over $Q$, there is a straight-forward description of all two component links $Q \cup V$, so that surgery on $Q \cup V$ gives $\#_{2} (S^{1} \times S^{2})$.

The critical ingredient in the characterization of $V$ is the collection of properties listed in Corollary \ref{cor:genustwo}, which apply because $Q$ is a genus two fibered knot.   Let $M$ be the $3$-manifold obtained by $0$-framed surgery on the square knot $Q$, so $M$ fibers over the circle with fiber the closed genus $2$ surface $F$.  There is a simple picture of the monodromy $h: F \to F$ of the bundle $M$, obtained from a similar picture of the monodromy on the fiber of a trefoil knot, essentially by doubling it \cite[Section 10.I]{Ro}:

 \begin{figure}[ht!]
  \labellist
\small\hair 2pt

\pinlabel $F$ at 100 260
\pinlabel $P$ at 300 230
\pinlabel \color{red}{$\sss$} at 156 167
\pinlabel \color{ForestGreen}{$\rho$} at 90 330
\pinlabel \color{Brown}$\gamma/\rho$ at 250 220
\pinlabel \color{black}{$/\rho$} at 190 172

 \endlabellist
    \centering
    \includegraphics[scale=0.7]{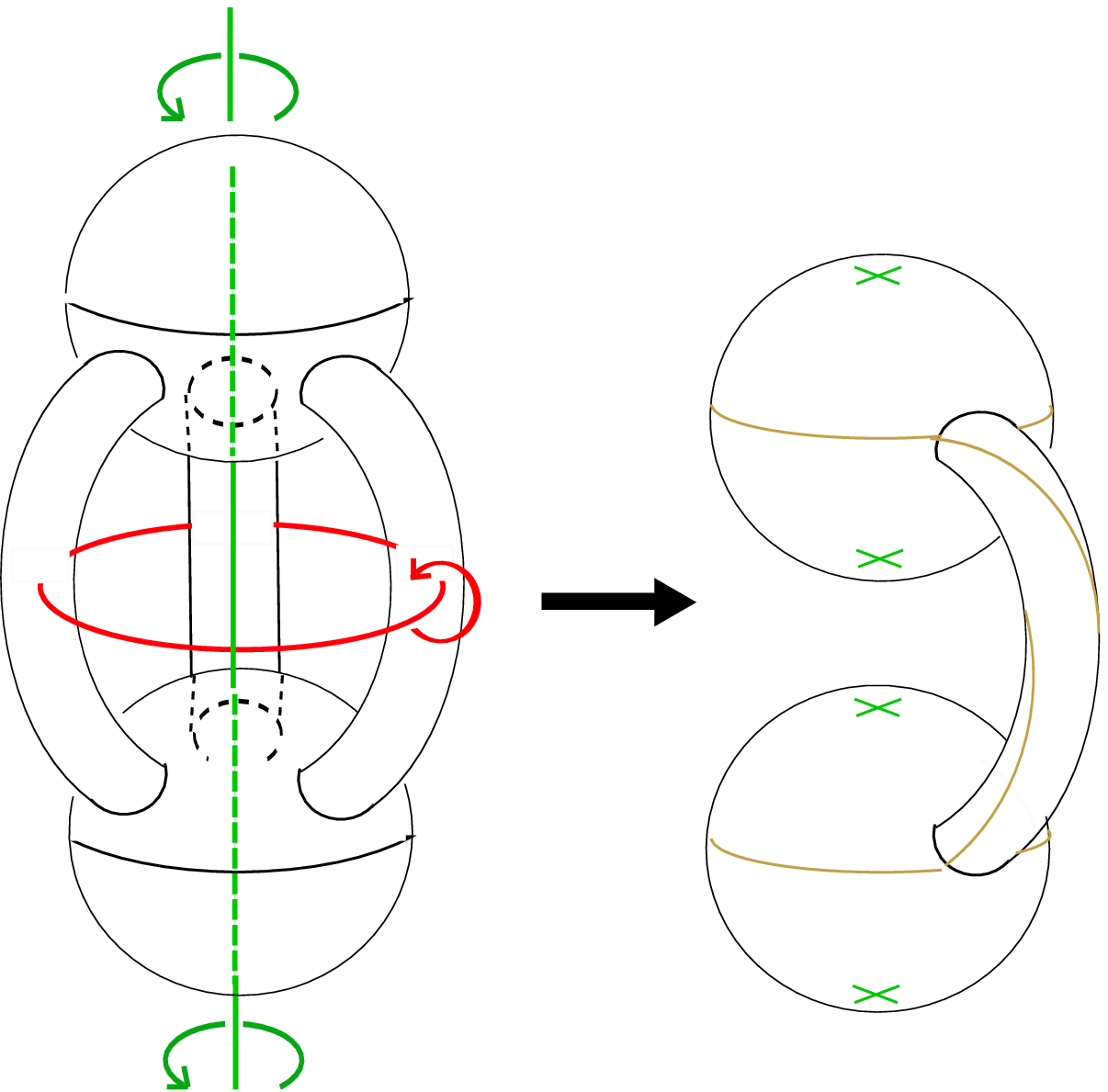}
    \caption{} \label{fig:rhosigma}
    \end{figure}

Regard $F$ as obtained from two spheres by attaching $3$ tubes between them.  See Figure \ref{fig:rhosigma}.  There is an obvious period $3$ symmetry $\rho: F \to F$ gotten by rotating $\frac{2\pi}{3}$ around an axis intersecting each sphere in two points, and a period $2$ symmetry (the hyperelliptic involution) $\sss: F \to F$ obtained by rotating around a circular axis that intersects each tube in two points.  Then $h = \rho \circ \sss = \sss \circ \rho$ is an automorphism of $F$ of order $2 \times 3 = 6$.

The quotient of $F$ under the action of $\rho$ is a sphere with $4$ branch points, each of branching index $3$.  Let $P$ be the $4$-punctured sphere obtained by removing the branch points.   A simple closed curve in $P$ is {\em essential} if it doesn't bound a disk and is not $\bdd$-parallel.  Put another way, a simple closed curve in $P$ is essential if and only if it divides $P$ into two twice-punctured disks.
It is easy to see that there is a separating simple closed curve $\gamma \subset F$ that is invariant under $\sss$ and $\rho$, and hence under $h$, that separates $F$ into two punctured tori $F_R$ and $F_L$; the restriction of $h$ to $F_R$ or $F_L$ is the monodromy of the trefoil knot.   The quotient of $\gamma$ under $\rho$ is shown as the brown curve in Figure \ref{fig:rhosigma} .

(Digressing a bit, it is instructive to compare in this example the monodromy of the closed surface $F$ with that of the Seifert surface $h_-: F_- \to F_-$ appearing in Proposition \ref{prop:strictmono}.  To recover $h_-$ from the given description of $h: F \to F$, consider $F_-$ as the complement of a point $* \in \gamma$ in $F$.  The image of the point $*$ under $h$ is another point in $\gamma$; to see the monodromy on $F_- = F - \{*\}$, compose $h$ with an isotopy of $F$ that slides $h(*)$ back to $*$ along a subarc of $\gamma$.   Typically, even a simple closed curve in $F$ that is carried by $h$ to a disjoint simple closed curve will intersect its image once $h(*)$ is slid back to $*$.)

An important property of the hyperelliptic involution $\sss: F \to F$ is that it fixes the isotopy class of each curve.  That is, for any simple closed curve $c \subset F$, $\sss(c)$ is isotopic to $c$.  The isotopy preserves orientation on $c$ if and only if $c$ is separating.

This immediately gives
\begin{cor} For any simple closed curve $c \subset F$, $h(c)$ is isotopic in $F$ to a curve disjoint from $c$ if and only if $\rho(c)$ is isotopic in $F$ to a curve disjoint from $c$.
\end{cor}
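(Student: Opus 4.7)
The plan is to exploit the decomposition $h = \sigma \circ \rho$ together with the stated property of the hyperelliptic involution $\sigma$ (that $\sigma(c')$ is isotopic to $c'$ for every simple closed curve $c' \subset F$).

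First I would set $c' = \rho(c)$ and apply $\sigma$ to get $h(c) = \sigma(\rho(c)) = \sigma(c')$. By the hyperelliptic property, $\sigma(c')$ is isotopic in $F$ to $c'$, so $h(c)$ is isotopic in $F$ to $\rho(c)$. This identifies $h(c)$ and $\rho(c)$ as the same isotopy class of unoriented simple closed curves in $F$.

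Next I would invoke the standard fact that whether a simple closed curve can be isotoped off of another depends only on the isotopy classes of the two curves (equivalently, on their geometric intersection number). Since $h(c)$ and $\rho(c)$ represent the same isotopy class in $F$, the geometric intersection number of $c$ with $h(c)$ equals that of $c$ with $\rho(c)$. Hence $h(c)$ can be isotoped off $c$ in $F$ if and only if $\rho(c)$ can.

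There is essentially no obstacle here; the main point is simply to notice that the obstruction to being isotopable off $c$ depends only on the isotopy class of the moved curve, and the two candidate curves $h(c)$ and $\rho(c)$ differ by $\sigma$, which preserves every isotopy class. No case analysis (separating vs.\ non-separating) is needed, since the hyperelliptic property applies to every simple closed curve; the orientation behavior of $\sigma$ on $c$ noted in the preceding paragraph is irrelevant because the statement is about disjointness, not about oriented isotopy.
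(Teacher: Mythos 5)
Your proposal is correct and rests on exactly the same key fact as the paper's proof, namely that the hyperelliptic involution $\sigma$ preserves the isotopy class of every simple closed curve. The paper packages this slightly differently --- it applies $\sigma$ to $c$ itself (as $h^3(c)\simeq c$) and then transports disjointness by the homeomorphism $h^{-3}$, whereas you apply $\sigma$ directly to $\rho(c)$ to conclude $h(c)=\sigma(\rho(c))\simeq\rho(c)$ --- but this is only a marginally more direct rendering of the same argument, and your observation that ``isotopable off $c$'' depends only on the isotopy class of the moved curve is exactly what the paper's conjugation step accomplishes.
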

\begin{proof}  Since $h^3(c) = \sss(c)$ is isotopic to $c$ in $F$, $h(c)$ is isotopic to a curve disjoint from $c$ if and only if $h(c)$ is isotopic to a curve disjoint from $h^3(c)$.  Applying $h^{-3}: F \to F$ shows that $h(c)$ is isotopic to a curve disjoint from $h^3(c)$ if and only if $h^{-2}(c) = \rho(c)$ is isotopic to a curve disjoint from $c$.
\end{proof}

  \begin{lemma} \label{lemma:overline}For the $3$-fold branched covering $/\rho: F \to S^2 \supset P$ described above: \begin{enumerate}
 \item An essential simple closed curve $c \subset F$ has the property that $\rho(c)$ can be isotoped to be disjoint from $c$ if and only if $c$ is isotopic to a lift of an essential simple closed curve $\overline{c}$ in $P$.

 \item  A lift $c$ of an essential simple closed curve $\overline{c}$ in $P$ is separating in $F$ if and only if $\overline{c}$ separates the pair of branch points coming from $F_L$ from the branch points coming from $F_R$.

 \item  A single lift $c$ of an essential simple closed curve $\overline{c}$ in $P$ is non-separating in $F$ if and only if $c$ projects homeomorphically to $\overline{c}$.
 \end{enumerate}
 \end{lemma}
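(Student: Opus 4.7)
My plan is to handle (1) first, via a direct projection argument in the easy direction and an equivariant geodesic argument in the converse, and then to deduce (2) and (3) from a calculation of the $\mathbb{Z}/3$-monodromies at the four branch points of the cover $/\rho\colon F\to S^2$.

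For the easy direction of (1), note that if $c$ is a lift of $\overline{c}$, then $\rho(c)$ is also a lift: if $\overline{c}$ has nontrivial monodromy in $\mathbb{Z}/3$, it lifts to a single $\rho$-invariant circle, so $\rho(c)=c$ and a parallel push-off is disjoint from $c$; if its monodromy is trivial, $\overline{c}$ lifts to three disjoint circles cyclically permuted by $\rho$, so $\rho(c)$ is already disjoint from $c$. For the converse, equip $F$ with a $\rho$-invariant hyperbolic metric, which exists because $F/\rho$ is the hyperbolic orbifold $S^2(3,3,3,3)$ and any hyperbolic structure on this orbifold pulls back to $F$. Let $c^*$ be the geodesic representative of $[c]$. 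Since $\rho$ is an isometry, $\rho(c^*)$ is the geodesic representative of $[\rho(c)]$, so it realizes $i(c,\rho(c))=0$ with $c^*$ and is therefore disjoint from or equal to $c^*$; the same holds for $\rho^2(c^*)$. Hence the orbit $\{c^*,\rho c^*,\rho^2 c^*\}$ is a $\rho$-invariant family of pairwise disjoint simple geodesics whose image $\overline{c}$ in $P$ is a simple closed curve of which $c^*$, and hence $c$, is a lift. Essentialness of $c$ forces essentialness of $\overline{c}$, since a disk or boundary-parallel annular region in $P$ bounded by $\overline{c}$ would lift to a disk in $F$ bounded by $c$.

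For (2) and (3), I first pin down the local monodromies. Applying Riemann--Hurwitz to $\rho|_{F_L}$ on the once-punctured torus, the action has two interior fixed points and its boundary monodromy along $\gamma$ must be nontrivial, because $\gamma$ is a single $\rho$-invariant circle rather than three. The constraint that the two fixed-point monodromies and the boundary monodromy sum to zero in $\mathbb{Z}/3$ then forces the two $F_L$-branch points to have equal nontrivial monodromy (say $1$), and the analogous computation on $F_R$ (whose boundary is $\gamma$ with reversed orientation) gives monodromy $2$ at both $F_R$-branch points. For (2), if $\overline{c}$ separates the $F_L$-punctures from the $F_R$-punctures in $P$, then $\overline{c}$ is isotopic to $\gamma/\rho$ (since any 2+2 partition of four punctures of a sphere uniquely determines a simple closed curve up to isotopy), and its lift is isotopic to $\gamma$, which is separating in $F$; if instead $\overline{c}$ realizes a mixed partition, its monodromy is $1+2\equiv 0$, so $\overline{c}$ lifts to three disjoint circles cutting $F$ into two pairs of pants (each the 3-fold branched cover of a twice-punctured disk, by Riemann--Hurwitz), and deleting a single one of the three lifts leaves the two pants glued along the remaining two circles into a connected subsurface, so that lift is non-separating. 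Part (3) follows immediately: a single lift $c$ projects homeomorphically to $\overline{c}$ precisely when the cover over $\overline{c}$ is disconnected, i.e., its monodromy is trivial, i.e., $\overline{c}$ is of mixed type, which by (2) happens precisely when $c$ is non-separating.

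The main technical obstacle is the converse direction of (1), where one must extract a lift structure from the purely isotopy-theoretic hypothesis $i(c,\rho(c))=0$. The $\rho$-invariant hyperbolic metric handles this cleanly: geodesic representatives already achieve minimal geometric intersection and are permuted by the isometry $\rho$, so the $\rho$-orbit of $c^*$ is automatically in equivariant minimal position, making the projection to $P$ immediate.
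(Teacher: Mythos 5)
Your part (1) is essentially the paper's argument verbatim (project lifts down for the easy direction; $\rho$-invariant hyperbolic metric and geodesic representatives for the converse), and your treatment of (2)--(3) via local monodromies at the branch points and Riemann--Hurwitz is a legitimate alternative to the paper's route, which instead shows no non-separating curve is $\rho$-invariant because $\pm1$ is not an eigenvalue of $h_*$ on $H_1(F)$, and identifies the trivial-monodromy curves by a north-pole/south-pole bookkeeping of the rotation axis. Your monodromy computation itself is correct: the two $F_L$ branch points carry equal nontrivial monodromy and the two $F_R$ branch points carry its inverse, and your analysis of the mixed case (three lifts, two pairs of pants, complement of one lift connected) is sound.

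However, there is one genuinely false step. You assert that a $2+2$ partition of the four punctures of a sphere determines an essential simple closed curve up to isotopy, and use this to conclude that a curve $\overline{c}$ separating the $F_L$ branch points from the $F_R$ branch points must be isotopic to $\gamma/\rho$. This is wrong: essential simple closed curves in the four-punctured sphere are classified by slopes in $\mathbb{Q}\cup\{\infty\}$, and infinitely many distinct slopes (e.g.\ $0$ and $2$) induce the same partition, differing by Dehn twists. Indeed, the existence of many non-isotopic curves realizing each partition is exactly what makes Corollary~\ref{cor:enumerate} interesting, so this cannot be waved away. Fortunately the conclusion you need --- that the (single, $\rho$-invariant) lift of such a $\overline{c}$ is separating in $F$ --- follows from the very Riemann--Hurwitz count you already deploy in the mixed case: each complementary disk of $\overline{c}$ contains two branch points of equal monodromy, so the total boundary monodromy is nonzero and the preimage of each disk is a connected surface with one boundary circle and Euler characteristic $3\cdot 1 - 2\cdot 2 = -1$, i.e.\ a once-punctured torus; hence the unique lift $c$ cuts $F$ into two once-punctured tori and is separating. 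With that substitution your proof of (2), and hence of (3), goes through.
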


 \begin{proof}  Since $\overline{c}$ is embedded, any lift $c$ of $\overline{c}$ will either be carried by $\rho$ to a disjoint curve, or to $c$ itself.  In either case, $\rho(c)$ can be isotoped off of $c$.  On the other hand, suppose $\rho(c)$ can be isotoped off of $c$.  Give $F$ the standard hyperbolic metric, which is invariant under $\rho$, and isotope $c$ to a geodesic in $F$.  Then $\rho(c)$ is also a geodesic and, since they can be isotoped apart, either $\rho(c) = c$ or $c \cap \rho(c) = \emptyset$.  Apply $\rho^{\pm 1}$ to deduce that  $c, \rho(c), \rho^{2}(c)$ either all coincide or are all disjoint.  In either case, $c$ projects to an embedded curve in $P$.  This proves the first part of the Lemma.

To prove the second and third parts, we first establish:

\bigskip

{\bf Claim:}  An essential simple closed curve $\overline{c} \subset P$ lifts to a curve $c$ that is invariant under $\rho$ if and only if $c$ is separating in $F$.

\bigskip

\noindent {\em Proof of Claim.}  Since $\pm 1$ are not eigenvalues of the monodromy matrix for $h_*: \mathbb{Z}^4 \to \mathbb{Z}^4$, no non-separating curve is left invariant by $h$ or $\rho$.  On the other hand, if the essential curve $c$ (and hence also $\rho(c)$) is separating and $\rho(c)$ is disjoint from $c$ then, since $F$ only has genus $2$, $c$ and $\rho(c)$ must be parallel in $F$.  This proves the claim.

\bigskip

The Claim establishes the third part of the Lemma.  For the second part, we restate the Claim: whether $\overline{c}$ lifts to a non-separating or to a separating curve is completely determined by whether $\overline{c}$ is covered in $F$ by three distinct curves or is thrice covered by a single curve.  That, in turn, is determined by whether $\overline{c}$ represents an element of $\pi_1(P)$ that is mapped trivially or non-trivially to $\mathbb{Z}_3$ under the homomorphism $\pi_1(P) \to \mathbb{Z}_3$ that defines the branched covering $F \to S^2 \supset P$.  One way to view such a curve $\overline{c}$ in $P$ is as the boundary of a regular neighborhood of an arc $\underline{c}$ between two branch points of $S^2 \supset P$, namely a pair of branch points (either pair will do) that lie on the same side of $\overline{c}$ in $S^2$.  From that point of view, $\overline{c}$ represents a trivial element of $\mathbb{Z}_3$ if and only if the normal orientations of the branch points at the ends of the arc $\underline{c}$ disagree; that is, one is consistent with a fixed orientation of the rotation axis and one is inconsistent.  Put another way, $\overline{c}$ represents a trivial element if and only if one endpoint of $\underline{c}$ is a north pole in Figure \ref{fig:rhosigma} and the other is a south pole.  The brown curve in the Figure, which lifts to the curve separating $F_R$ from $F_L$, separates both north poles from both south poles.  This establishes the second part of the Lemma.  \end{proof}

\begin{lemma} \label{lemma:framing} If $c \subset F_- \subset F$ projects to a simple closed curve $\overline{c}$ in $P$ then the framing of $c$ given by the Seifert surface $F_- \subset S^3$ of $Q$ is the $0$-framing. \end{lemma}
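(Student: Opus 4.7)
The plan is to translate the claim into the Seifert form. Let $V$ denote the Seifert form of $Q$ on $H_1(F_-)$, so that $V(c,c) = lk(c, c^+)$ for $c^+$ the $F_-$-pushoff of $c$. The $F_-$-framing of $c$ coincides with the $0$-framing if and only if $V(c,c) = 0$, and I aim to force this identity from an orientation-reversing symmetry that forces $V(c,c) = -V(c,c)$. The separating case is essentially immediate: if $\overline c$ is of type~(a), then by Lemma~\ref{lemma:overline}(2) the lift $c$ is separating in $F$, hence null-homologous in $F_-$, so $V(c,c)=0$ automatically. I therefore focus on the non-separating case, in which the lifts of $\overline c$ form a free $\rho$-orbit $\{c, \rho c, \rho^2 c\}$.

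Next I would invoke the mirror involution of the square knot. Since $Q = K_R \# K_L$ with $K_L$ the mirror of $K_R$, one realizes the connected sum symmetrically across a $2$-sphere, chooses $F_-$ as the boundary-sum of mirror-paired Seifert surfaces for the two trefoils, and takes $\tau\colon S^3\to S^3$ to be reflection across this sphere. Then $\tau$ is an orientation-reversing involution of $(S^3, Q, F_-)$, and since mirroring reverses the handedness of the $3$-fold rotation of each trefoil, $\tau \rho \tau^{-1} = \rho^{-1}$. Hence $\tau$ descends to an involution $\overline\tau$ of $S^2 = F/\rho$, and because $\tau$ swaps $F_R$ with $F_L$, the induced permutation of the four branch points is a double transposition pairing the two $F_R$-branch points with the two $F_L$-branch points. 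The combinatorial key is that every double transposition in $S_4$ lies in the Klein four-subgroup, which is exactly the kernel of the natural action of $S_4$ on the three partitions of a $4$-element set into two unordered pairs. Since an essential simple closed curve in the $4$-punctured sphere $P$ is determined up to isotopy by its partition of the four punctures, $\overline\tau(\overline c) \simeq \overline c$ in $P$, so $\tau(c)$ is isotopic in $F$ to some $\rho^k c$ and hence $[\tau c] = \pm[\rho^k c]$ in $H_1(F_-)$.

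The argument then closes with three formal properties of $V$: $\tilde\rho$-invariance (since the extension $\tilde\rho$ is orientation-preserving on $S^3$) gives $V(\rho^k c, \rho^k c) = V(c,c)$; orientation-reversal of $\tau$ gives $V(\tau x,\tau y) = -V(x,y)$; and bilinearity gives $V(x,x) = V(-x,-x)$. Chaining these yields $V(c,c) = V(\rho^k c, \rho^k c) = V(\tau c, \tau c) = -V(c,c)$, and therefore $V(c,c) = 0$. The main obstacle will be the geometric step of exhibiting $\tau$ together with a Seifert surface $F_-$ simultaneously invariant under both $\tau$ and $\tilde\rho$, consistent with the symmetry structure of Figure~\ref{fig:rhosigma}; once that equivariant setup is in place, the rest of the argument is automatic.
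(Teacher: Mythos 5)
Your overall strategy---an orientation-reversing symmetry of $(S^3,Q,F_-)$ exchanging the two trefoil summands, forcing the self-framing to satisfy $V(c,c)=-V(c,c)$---is the same as the paper's, and your translation into the Seifert form together with the homological shortcut in the separating case is fine. But two of your steps do not hold up as stated. First, your ``combinatorial key'' rests on a false assertion: an essential simple closed curve in the $4$-punctured sphere is \emph{not} determined up to isotopy by the partition of the punctures it induces. For each of the three partitions there are infinitely many isotopy classes of essential curves inducing it (they are parametrized by slopes in $\mathbb{Q}\cup\{\infty\}$, and only the mod~$2$ residues of a slope see the partition); for instance, twisting $\overline{c}$ about a curve inducing a different partition preserves the partition of $\overline{c}$ but changes its isotopy class. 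So knowing that $\overline{\tau}$ permutes the branch points by a double transposition does not yield $\overline{\tau}(\overline{c})\simeq\overline{c}$. The paper gets this from the specific geometry: $\overline{\tau}$ is a $\pi$-rotation of the round sphere, an involution of hyperelliptic type on $P$, and such involutions fix every isotopy class of essential curve. Some input of this kind is unavoidable, since the $4$-punctured sphere does admit involutions that move curve classes.

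Second, your equality $V(\rho^k c,\rho^k c)=V(c,c)$ is hung on an ambient, orientation-preserving, $F_-$-preserving extension $\tilde\rho$ of the period-$3$ map, which you do not construct and which you yourself flag as the main obstacle---but the paper shows you never need it. Since $\rho=h\circ\sigma$, the invariance follows from two much cheaper facts: the monodromy $h$ preserves the self-framing because the fibration of the knot complement realizes $h$ by an ambient isotopy of $F_-$ in $S^3$; and $\sigma(c)$ is isotopic to $c$ in the closed fiber $F$, while curves of $F_-$ that are isotopic in $F$ differ in $F_-$ only by band sums with $\partial$-parallel (hence null-homologous, hence $V$-trivial) curves, so $[\sigma(c)]=\pm[c]$ in $H_1(F_-)$. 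This ``isotopy in $F$ versus isotopy in $F_-$'' bookkeeping is exactly what properties (a)--(c) of the paper's proof establish. Replacing your $\tilde\rho$ with this argument, and replacing the partition claim with an actual analysis of $\overline{\tau}$, would close the gaps and essentially reproduce the paper's proof.
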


\begin{proof}  Here is an equivalent conclusion:  If $c'$ is a parallel copy of $c$ in $F_-$, then $\lambda_F(c) \equiv link(c, c') = 0$ in $S^3$.  This self-linking number $\lambda_F(c)$ is independent of the orientation of $c$ (since reversing orientation reverses also the orientation of its push-off $c'$) and doesn't change when $c$ is isotoped in $F_-$.  One way of viewing $\lambda_F(c)$ is this: take an annular neighborhood of $c$ in $F_-$ and ask whether in $S^3$ the annulus is right-hand twisted ($\lambda_F(c) > 0$), left-hand twisted ($\lambda_F(c) < 0$) or not twisted at all ($\lambda_F(c) = 0$).  Here are some useful properties:

\begin{enumerate}[ (a) ]
\item Let $b$ be a simple closed curve in $F_-$ that is parallel to $\bdd F_-$.  Then $\lambda_F(b) = 0$.

Use as a representative annular neighborhood of $b$ the annulus between $b$ and $\bdd F_-$.  Then the complement of that annulus in $F_-$ provides a null-cobordism of one end of the annulus that is disjoint from the other end of the annulus, hence the two ends of the annulus have trivial linking number.

\item  Suppose $c$ is a simple closed curve in $F_-$ and $c'$ is a curve obtained by band-summing $c$ to the curve $b$ just described, via a band that lies in $F_-$.  Then  $\lambda_F(c) = \lambda_F(c').$

First note that a slight push-off of $F_-$ from itself gives a null-cobordism of $b$ that is disjoint from $c$, so $link(b, c) = 0$.  Now a direct argument in a spirit similar to that of the framing addition formula for handle-slides (see \cite[5.1.1]{GS}) shows $\lambda_F(c') = \lambda_F(c) + \lambda_F(b) + 2link(b, c) = \lambda_F(c).$

\item \label{enum:isotopy} Suppose $c_1, c_2$ are simple closed curves in $F_-$ that are isotopic {\em in $F$} (not necesarily in $F_-$).  Then $\lambda_F(c_1) = \lambda_F(c_2).$  Thus $\lambda_F$ can be extended unambiguously to isotopy classes of curves in $F$.

An isotopy in $F$ can be broken up into a series of isotopies in $F_-$ and (when the puncture in $F$ corresponding to $\bdd F_-$ is crossed) band sums to the curve $b$ described above.  We have just seen that both moves leave $\lambda_F$ unchanged.

\item \label{enum:sigma} For any simple closed curve $c$ in $F$ and $\sss: F \to F$ the hyperelliptic involution, $\sss(c)$ is isotopic to $c$ in $F$, so $\lambda_F(\sss(c)) = \lambda_F(c).$

\item \label{enum:rho} For any simple closed curve $c$ in $F$ and $\rho: F \to F$ the period $3$ symmetry described above, $\lambda_F(\rho(c)) = \lambda_F(c).$

The monodromy $h:F_- \to F_-$ doesn't change $\lambda_F$, since the fiber structure gives an isotopy in $S^3$ of any annulus in $F_-$ to its image under the monodromy.  According to (\ref{enum:sigma}) $\sss$ also doesn't change $\lambda_F$, so then neither does $h \circ \sss = \rho$.

\end{enumerate}

Now consider the following automorphism $\tau$ of $S^3$ that preserves the Seifert surface $F_-$ of $Q$, as viewed in the left side of Figure  \ref{fig:tau}:  Reflect through the plane of the knot projection (this reflection makes the right-hand trefoil a left-hand trefoil, and vice versa), then do a $\pi$ rotation around an axis in $S^3$ that passes through the center of the arc $\gamma_- \subset F_-$ that separated $K_R$ from $K_L$.  The automorphism $\tau$ is orientation preserving on the plane of reflection, since the reflection is the identity on this plane and the rotation is orientation preserving.  Hence $\tau$ is orientation preserving also on the Seifert surface, since, for example, we can suppose the Seifert surface has a point of tangency with the plane of reflection throughout the construction.  On the other hand, $\tau$ is orientation reversing on $S^3$.  Because of the orientation reversal of $S^3$ we deduce that for any simple closed curve $c$ in $F_-$, $\lambda_F(\tau(c)) = -\lambda_F(c)$ or, more concretely, $\tau$ changes a right-hand twist in the annulus about $c$ in $F$ into a left-hand twist and vice versa.  A visual example of this phenomenon is how $\tau$ carries the right-hand trefoil summand, with right-hand twists in its part of the Seifert surface, to the left-hand trefoil summand with left-hand twists in its part of the Seifert surface.

The automorphism $\tau |F_-$ descends to an automorphism $\overline{\tau}$ of $S^2 \supset P$ given by $\pi$-rotation about the blue axis shown in Figure  \ref{fig:tau}.  This rotation exchanges $F_R$ and $F_L$ but, up to isotopy and orientation, leaves invariant the simple closed curves in $P$.  In particular, if $c \subset F$ projects to a simple closed curve $\overline{c} \subset P$, then $\tau(c)$ projects to $\overline{\tau}(\overline{c})$ which is isotopic to $\overline{c}$ in $P$.  The isotopy lifts to $F$, so it follows that $\tau(c)$ is isotopic in $F$ to some $\rho^i(c), 0 \leq i  \leq 2$.  Hence by (\ref{enum:isotopy}) and (\ref{enum:rho}) above  $$\lambda_F(c) = -\lambda_F(\tau(c)) = -\lambda_F(\rho^i(c)) = -\lambda_F(c).$$  This implies that $\lambda_F(c) = 0$.
\end{proof}

 \begin{figure}[ht!]
  \labellist
\small\hair 2pt
\pinlabel \color{blue}{$\overline{\tau}$} at 285 125
\pinlabel \color{black}{$\gamma_-$} at 190 90
 \endlabellist
    \centering
    \includegraphics[scale=0.8]{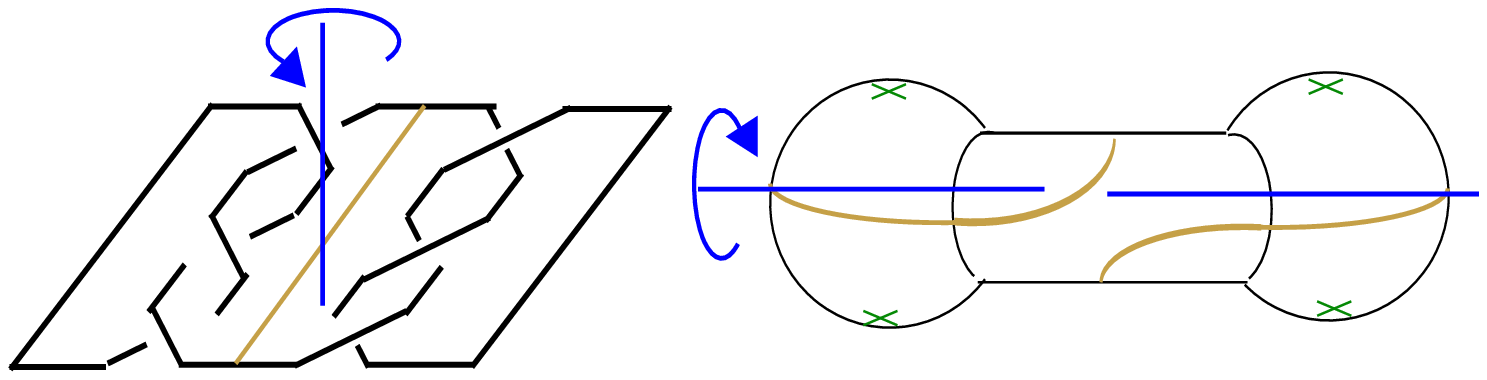}
    \caption{} \label{fig:tau}
    \end{figure}

\begin{cor} \label{cor:enumerate} Suppose $Q \subset S^3$ is the square knot with fiber $F_- \subset S^3$ and $V \subset S^3$ is a disjoint knot.  Then $0$-framed surgery on $Q \cup V$ gives $\#_{2} (S^{1} \times S^{2})$ if and only if, after perhaps some handle-slides of $V$ over $Q$, $V$ lies in $F_-$ and the quotient map $(/\rho):F \to S^2$ projects $V$ homeomorphically to an essential simple closed curve in $P$.
\end{cor}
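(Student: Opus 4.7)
The plan is to use Corollary \ref{cor:genustwo} as the main bridge: its four conditions capture exactly when $0$-framed surgery on $Q \cup V$ yields $\#_{2}(S^{1} \times S^{2})$, and each condition translates cleanly into a statement about the quotient $P = F/\rho$ via Lemma \ref{lemma:overline} (with Lemma \ref{lemma:framing} handling the framing).

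\medskip

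For the forward direction, assume $0$-framed surgery on $Q \cup V$ gives $\#_{2}(S^{1} \times S^{2})$. Corollary \ref{cor:genustwo} lets us handle-slide $V$ over $Q$ so that $V \subset F_-$, that $h(V)$ is isotopic in $F$ to a curve disjoint from $V$, that $h(V)$ is not isotopic to $V$ in $F$, and that the $F$-framing of $V$ is the $0$-framing in $S^3$. The unnumbered corollary just preceding Lemma \ref{lemma:overline} converts the first condition into the statement that $\rho(V)$ is isotopic in $F$ to a curve disjoint from $V$, and then Lemma \ref{lemma:overline}(1) identifies $V$ up to isotopy with a lift of some essential simple closed curve $\bar V \subset P$. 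To upgrade to a \emph{homeomorphic} projection, I must show $V$ is non-separating; Lemma \ref{lemma:overline}(3) then supplies the conclusion. Suppose instead $V$ were separating. The Claim in the proof of Lemma \ref{lemma:overline} says that its lift must then be $\rho$-invariant up to isotopy, so $\rho(V)$ is isotopic to $V$ in $F$. Since the hyperelliptic involution $\sigma$ fixes every isotopy class, $h(V) = \rho \circ \sigma(V)$ would be isotopic to $V$, contradicting condition~(3) of Corollary \ref{cor:genustwo}. Hence $V$ is non-separating and projects homeomorphically to the essential curve $\bar V \subset P$.

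\medskip

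For the reverse direction, suppose $V \subset F_-$ projects homeomorphically to an essential simple closed curve $\bar V \subset P$. I verify the four hypotheses of Corollary \ref{cor:genustwo}. Condition~(1) holds by hypothesis. For condition~(2), the three $\rho$-translates $V, \rho(V), \rho^{2}(V)$ are pairwise disjoint lifts of $\bar V$, so $\rho(V)$ is disjoint from $V$, and the corollary before Lemma \ref{lemma:overline} then gives that $h(V)$ is isotopic in $F$ to a curve disjoint from $V$. For condition~(3), the Claim inside the proof of Lemma \ref{lemma:overline} observes that no non-separating curve can be invariant under $h$ (the monodromy has no eigenvalue $\pm 1$); since $V$ is non-separating by Lemma \ref{lemma:overline}(3), $h(V)$ is not isotopic to $V$. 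Condition~(4) is precisely Lemma \ref{lemma:framing}. Corollary \ref{cor:genustwo} now yields that $0$-framed surgery on $Q \cup V$ produces $\#_{2}(S^{1} \times S^{2})$.

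\medskip

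The main obstacle is the forward-direction step that rules out a separating lift, where one must see that a $\rho$-invariant lift is incompatible with condition~(3) of Corollary \ref{cor:genustwo}; this is where the interplay of $\sigma$ (fixing isotopy classes) and the identity $h = \rho \circ \sigma$ does essentially all the work. The remaining arguments are a direct translation of the conclusions of Corollary \ref{cor:genustwo} through Lemma \ref{lemma:overline} and Lemma \ref{lemma:framing}.
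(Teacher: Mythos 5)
Your proof is correct and follows essentially the same route as the paper: both directions reduce to the four conditions of Corollary~\ref{cor:genustwo}, translated through the $h$--$\rho$ correspondence, Lemma~\ref{lemma:overline}, and Lemma~\ref{lemma:framing}. Your forward direction usefully spells out the step the paper leaves implicit (ruling out a separating, hence $\rho$-invariant, lift via condition~(3)), but the argument is the same one.
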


Note that, according to Lemma \ref{lemma:overline}, essential simple closed curves $\overline{c}$ in $P$ that are such homeomorphic projections are precisely those for which one branch point of $F_L$ (or, equivalently, one branch point from $F_R$) lies on each side of $\overline{c}$. So another way of saying that $(/\rho)$ projects $V$ homeomorphically to an essential simple closed curve in $P$ is to say that $V$ is the lift of an essential simple closed curve in $P$ that separates one branch point of $F_L$ (or, equivalently $F_R$) from the other.

\begin{proof}  This is an application of Proposition \ref{cor:genustwo}.  Suppose that  $0$-framed surgery on $Q \cup V$ gives $\#_{2} (S^{1} \times S^{2})$.  Then, after possible handle-slides over $Q$, $V$ lies in $F_-$.
According to Proposition \ref{cor:genustwo}, in the closed surface $F$, $h(V)$ can be isotoped to be disjoint from $V$ but is not isotopic to $V$.  The result then follows from Lemma \ref{lemma:overline}.

Conversely, suppose that after some handle-slides of $V$ over $Q$, $V$ lies in $F_-$ and $(/\rho)$ projects $V$ homeomorphically to an essential simple closed curve $\overline{c}$ in $P \subset S^2$.  $\rho(V)$ then can't be isotopic to $V$ in $F$ (or else in fact $\rho(V) = V$ and so $(/\rho)|V: V \to \overline{c}$ would not be a homeomorphism) but $\rho(V)$ is disjoint from $V$, since $\overline{c}$ is embedded.  Thus the second and third  conditions of Proposition \ref{cor:genustwo} are satisfied.  Lemma \ref{lemma:framing} establishes the fourth condition.
\end{proof}

\section{The $4$-manifold viewpoint: a non-standard handle structure on $S^4$}    \label{sect:nonstandard}

To understand why the square knot $Q$ is likely to fail Property~2R, it is useful to ascend to 4 dimensions. We use an unusual family of handlebodies from \cite{Go1} to construct a family of 2-component links in $S^3$, each containing $Q$ as one component. Each of these links will fail to satisfy Generalized Property R if a certain associated presentation of the trivial group fails (as seems likely) to satisfy the Andrews-Curtis Conjecture. These links will also be the centerpiece of our discussion of the Slice-Ribbon Conjecture in Section~\ref{slice}.

In \cite{Go1}, the first author provided unexpected examples of handle structures on homotopy $4$-spheres which do not obviously simplify to give the trivial handle structure on $S^4$.  At least one family is highly relevant to the discussion above.  This is example \cite[Figure 1]{Go1}, reproduced here as the left side of Figure \ref{fig:Gompffig1b}.  (Setting $k = 1$ gives rise to the square knot.)  The two circles with dots on them represent $1$-handles, indicating that the $4$-manifold with which we begin is $(S^1 \times D^3) \natural (S^1 \times D^3)$ with boundary $\#_{2} (S^{1} \times S^{2})$ given by $0$-surgery.  (Think of the two dotted unknotted circles as bounding disjoint unknotted disks in the $4$-ball; the dots indicate one should scoop these disks out of $D^4$ rather than attach $2$-handles to $D^4$.)  The circles without dots represent $2$-handles attached to $\#_{2} (S^{1} \times S^{2})$ with the indicated framing.  A sequence of Kirby operations in \cite[\S 2]{Go1} shows that the resulting $4$-manifold has boundary $S^3$.

 \begin{figure}[ht!]
  \labellist
\small\hair 2pt
\pinlabel {\tiny $-n-1$} at 156 758
\pinlabel {\tiny $-n-1$} at 451 758
\pinlabel $n$ at 154 678
\pinlabel $n$ at 448 678
\pinlabel $0$ at 177 718
\pinlabel $-1$ at 117 718
\pinlabel $k$ at 85 718
\pinlabel $-k$ at 232 718
\pinlabel $k$ at 377 718
\pinlabel $-k$ at 527 718
\pinlabel $[0]$ at 470 718
\pinlabel $[0]$ at 336 772
\pinlabel $[0]$ at 336 666
\pinlabel $[-1]$ at 422 705
\pinlabel $0$ at 422 733
\pinlabel $0$ at 508 635
\endlabellist
    \centering
    \includegraphics[scale=0.7]{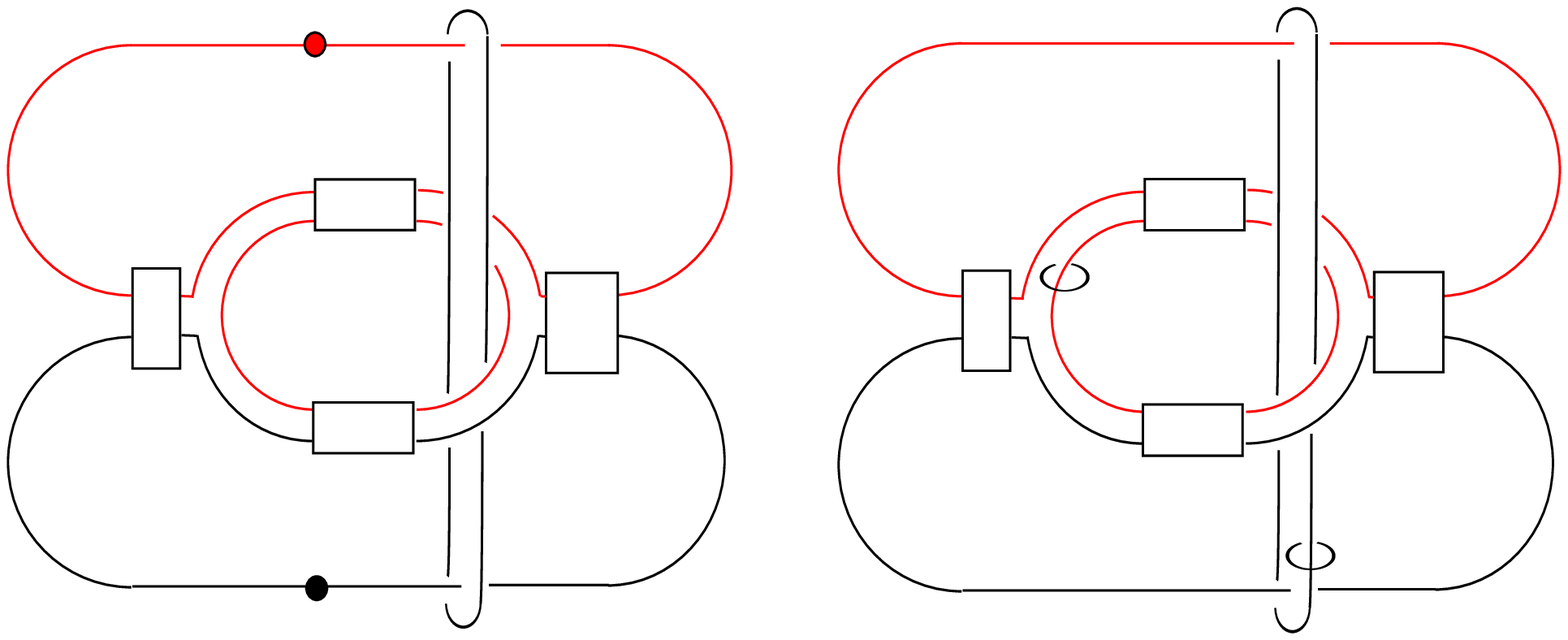}
    \caption{} \label{fig:Gompffig1b}
    \end{figure}

We will be interested in the $4$-manifold that is the trace of the $2$-handle surgeries, the manifold that lies between $\#_{2} (S^{1} \times S^{2})$ and $S^3$.  Our viewpoint, though, is dual to that in \cite{Go1}; the $4$-manifold is thought of as starting with $S^3$ (actually, with $\bdd D^4$) to which two $2$-handles are attached along a link $L_{n,k}$ to get $\#_{2} (S^{1} \times S^{2})$.  This dual viewpoint puts the construction solidly in the context of this paper.

A recipe for making the switch in perspective is given in \cite[Example 5.5.5]{GS}.  The result is shown in the right half of Figure \ref{fig:Gompffig1b}.  The circles representing $1$-handles are changed to $0$-framed $2$-handles and then all $2$-handles have their framing integers bracketed.  We call these circles bracketed circles; they form a complicated surgery description of $S^3$.  New $0$-framed $2$-handles are added, each linking one of these original $2$-handles.  These small linking $2$-handles will eventually be the link $L_{n,k}$, after we eliminate the other components by following the recipe for turning the original diagram into a trivial diagram of the $3$-sphere.  This process allows  handle slides of bracketed or unbracketed circles over bracketed ones, but does not allow handle slides of bracketed circles over unbracketed circles.  The reduction is done in Figure \ref{fig:Gompffig2b} and roughly mimics  \cite[Figures 7 and 8]{Go1}: The top (red) circle is handle-slid over the bottom (black) circle $C$ via a band that follows the tall thin $[0]$-framed circle.  In order to band-sum to a $0$-framed push-off  of $C$, the $n$ twists that are added in the lower twist box are undone by adding $-n$ twists via a new twist box to the left.  The Hopf link at the bottom of the left diagram is removed in a $2$-stage process: The unbracketed component is handleslid over $C$, so it is no longer linked with the $[0]$-framed component of the Hopf link.  Then $C$ is canceled with the $[0]$-framed circle of the Hopf link.  The result is the picture on the right.  The apparent change is both that the Hopf link has been removed and the framing of $C$ has been changed from $[0]$ to $0$.

 \begin{figure}[ht!]
  \labellist
\small\hair 2pt
\pinlabel {\small $-n-1$} at 150 520
\pinlabel {\small $-n-1$} at 450 520
\pinlabel $n$ at 455 440
\pinlabel $n$ at 150 440
\pinlabel $-n$ at 25 425
\pinlabel $-n$ at 337 425
\pinlabel $0$ at 176 360
\pinlabel $[0]$ at 25 476
\pinlabel $C$ at 275 476
\pinlabel $0$ at 340 476
\pinlabel $[0]$ at 140 360
\pinlabel $[0]$ at 40 530
\pinlabel $[0]$ at 352 530
\pinlabel $[-1]$ at 125 500
\pinlabel $[-1]$ at 433 500
\pinlabel $0$ at 180 495
\pinlabel $0$ at 480 500
\pinlabel $k$ at 78 476
\pinlabel $-k$ at 227 476
\pinlabel $k$ at 380 476
\pinlabel $-k$ at 527 476

\endlabellist
    \centering
    \includegraphics[scale=0.7]{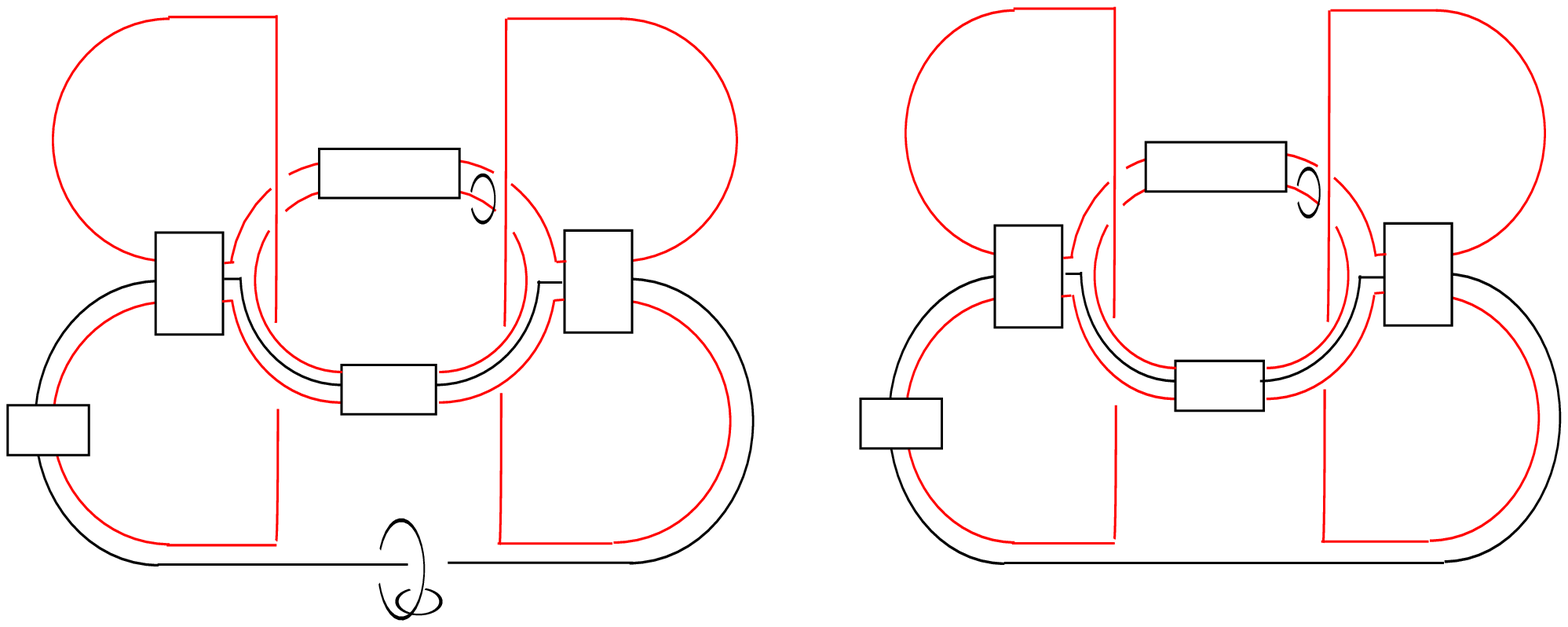}
    \caption{} \label{fig:Gompffig2b}
    \end{figure}

Next (Figure \ref{fig:Gompffig3b}) the large red $[0]$-framed circle is handle-slid over the central red $[-1]$ -framed circle.  That gives the figure on the left.  Note that the twist boxes guarantee we have pushed off with framing $-1$ as required.  Since there is now only one strand going through the top twist-box, the box can be ignored.  The handle-slide changes the $[0]$-framing on the slid circle to a $[+1]$-framing, since the two curves, if oriented in the same direction through the twist boxes, have linking number $n + (-n-1) = -1$ and, with that orientation, the handle-slide corresponds to a handle subtraction. To get the figure on the right, the lower black circle is moved into position and a flype moves the lower twist-box to a new upper twist-box.  (Note the sphere enclosing the left half of the diagram, intersecting the $n$-twist box and two other strands.)  If we ignore the black circles, the red curves form an unlink. Blowing these down would yield $S^3$ with the black curves realizing the desired link $L_{n,k}$.

 \begin{figure}[ht!]
  \labellist
\small\hair 2pt
\pinlabel $n$ at 155 673 %
\pinlabel $n$ at 435 730 %
\pinlabel $-n$ at 35 656 %
\pinlabel $-n$ at 310 650 %
\pinlabel $0$ at 40 705 %
\pinlabel $0$ at 320 705 %
\pinlabel $[1]$ at 50 755 %
\pinlabel $[1]$ at 325 755 %
\pinlabel $[-1]$ at 158 740 %
\pinlabel $[-1]$ at 435 690 %
\pinlabel $k$ at 85 705
\pinlabel $-k$ at 230 705
\pinlabel $k$ at 360 705
\pinlabel $-k$ at 509 705
\endlabellist
    \centering
    \includegraphics[scale=0.7]{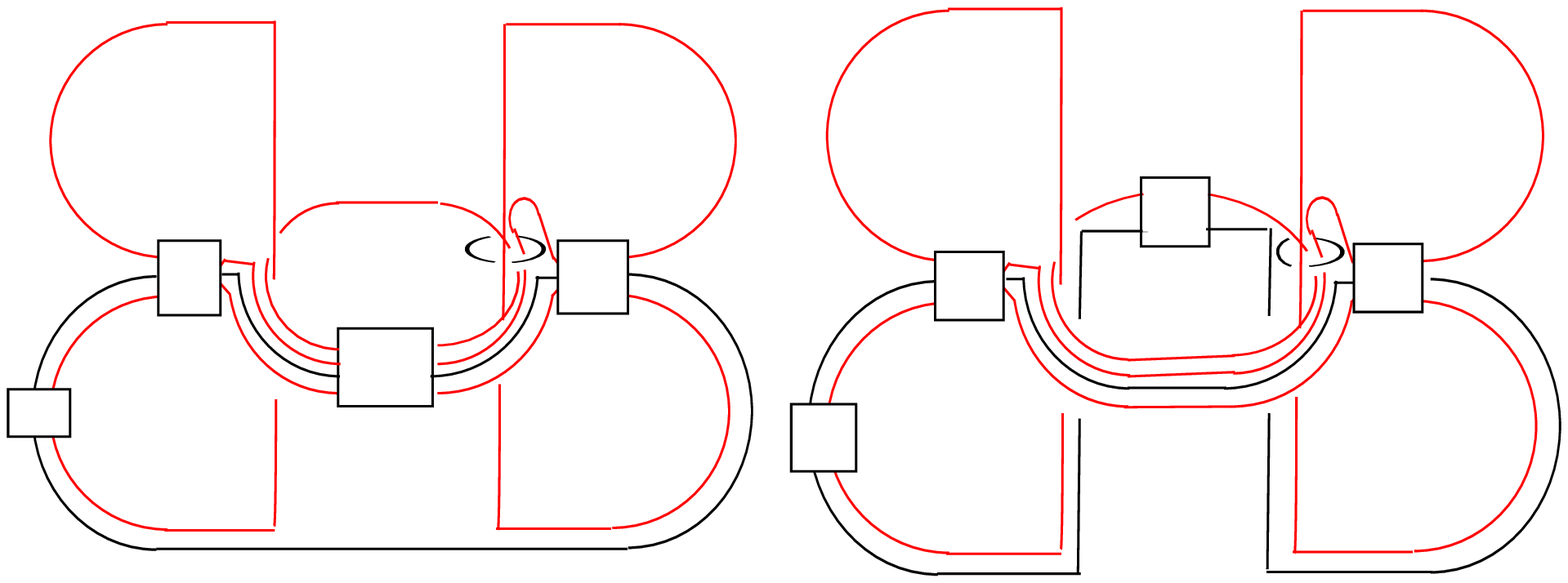}
    \caption{} \label{fig:Gompffig3b}
    \end{figure}

Aim now for the square knot, by setting $k = 1$ and changing the full twists to half twists by a flype.  Figure \ref{fig:Gompffig4} then shows a series of isotopies (clockwise around the figure beginning at the upper left) that simplify the red $[\pm 1]$-framed circles, while dragging along the $0$-framed black circles.

 \begin{figure}[ht!]
  \labellist
\small\hair 2pt
\pinlabel $n$ at 150 365 %
\pinlabel $n$ at 435 370 %
\pinlabel $n$ at 445 180 %
\pinlabel $n$ at 200 175 %
\pinlabel $-n$ at 30 410
\pinlabel $-n$ at 320 415
\pinlabel $-n$ at 40 230
\pinlabel $-n$ at 320 235
\pinlabel $0$ at 35 460
\pinlabel $0$ at 325 460
\pinlabel $0$ at 325 300
\pinlabel $0$ at 40 280
\pinlabel $[1]$ at 45 510
\pinlabel $[1]$ at 340 510
\pinlabel $[1]$ at 355 270
\pinlabel $[1]$ at 70 260
\pinlabel $[-1]$ at 130 470
\pinlabel $[-1]$ at 455 410
\pinlabel $[-1]$ at 400 240
\pinlabel $[-1]$ at 240 220
\pinlabel $0$ at 160 450
\pinlabel $0$ at 430 400
\pinlabel $0$ at 470 325
\pinlabel $0$ at 220 270
\endlabellist
    \centering
    \includegraphics[scale=0.7]{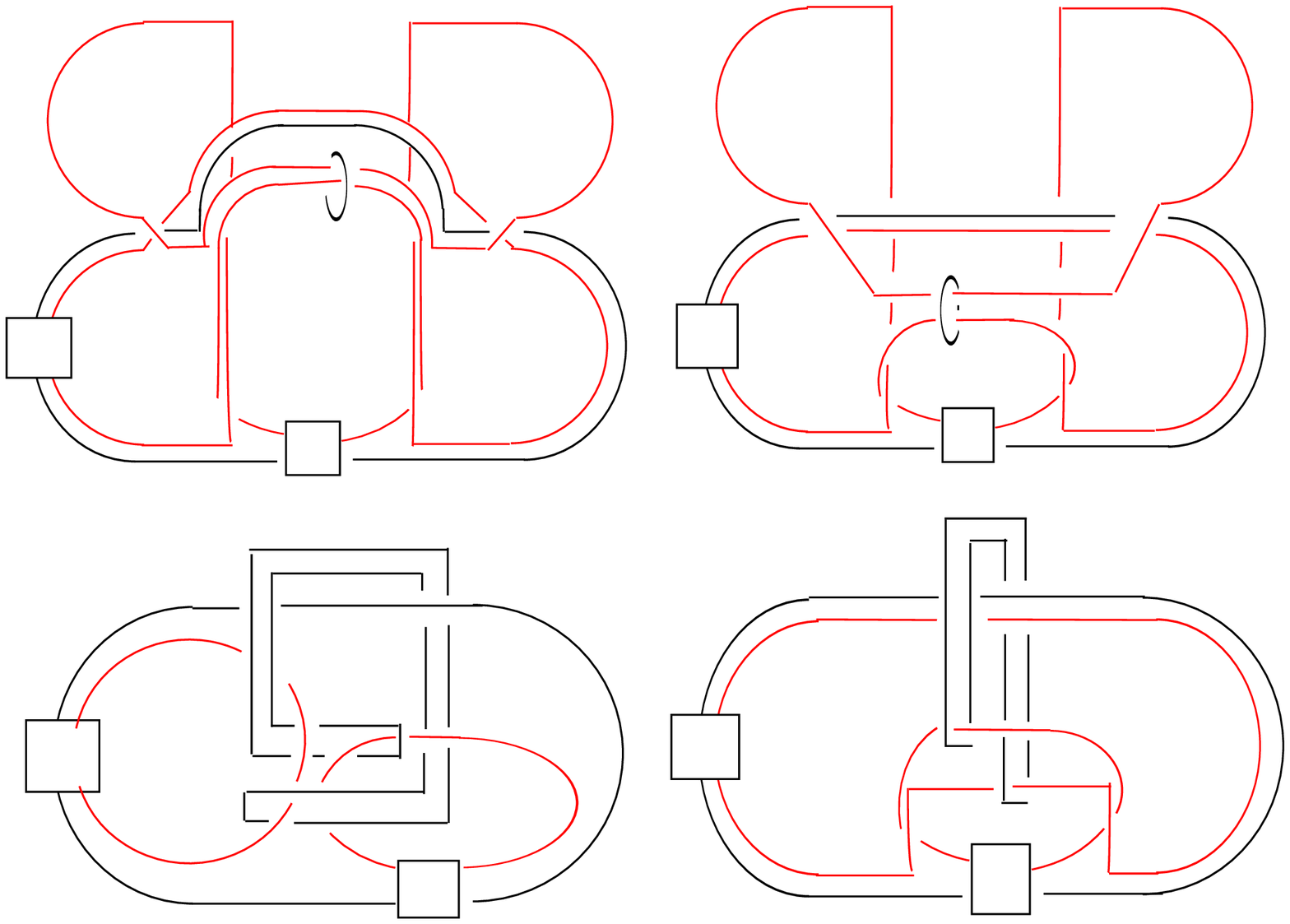}
    \caption{} \label{fig:Gompffig4}
    \end{figure}

Finally, Figure \ref{fig:Gompffig5} shows that the middle $0$-framed component, when the two red $[\pm 1]$-framed components are blown down, becomes the square knot $Q \subset S^3$.  (The other component becomes an interleaved connected sum of two torus knots, $V_n = T_{n, n+1} \# \overline{T_{n, n+1}}$, as shown in Figure~\ref{fig:Ln1}.)

This leaves two natural questions.

     \begin{figure}[ht!]
       \labellist
\small\hair 2pt
     \pinlabel $[1]$ at 75 530
\pinlabel $[-1]$ at 215 460
\endlabellist
    \centering
    \includegraphics[scale=0.7]{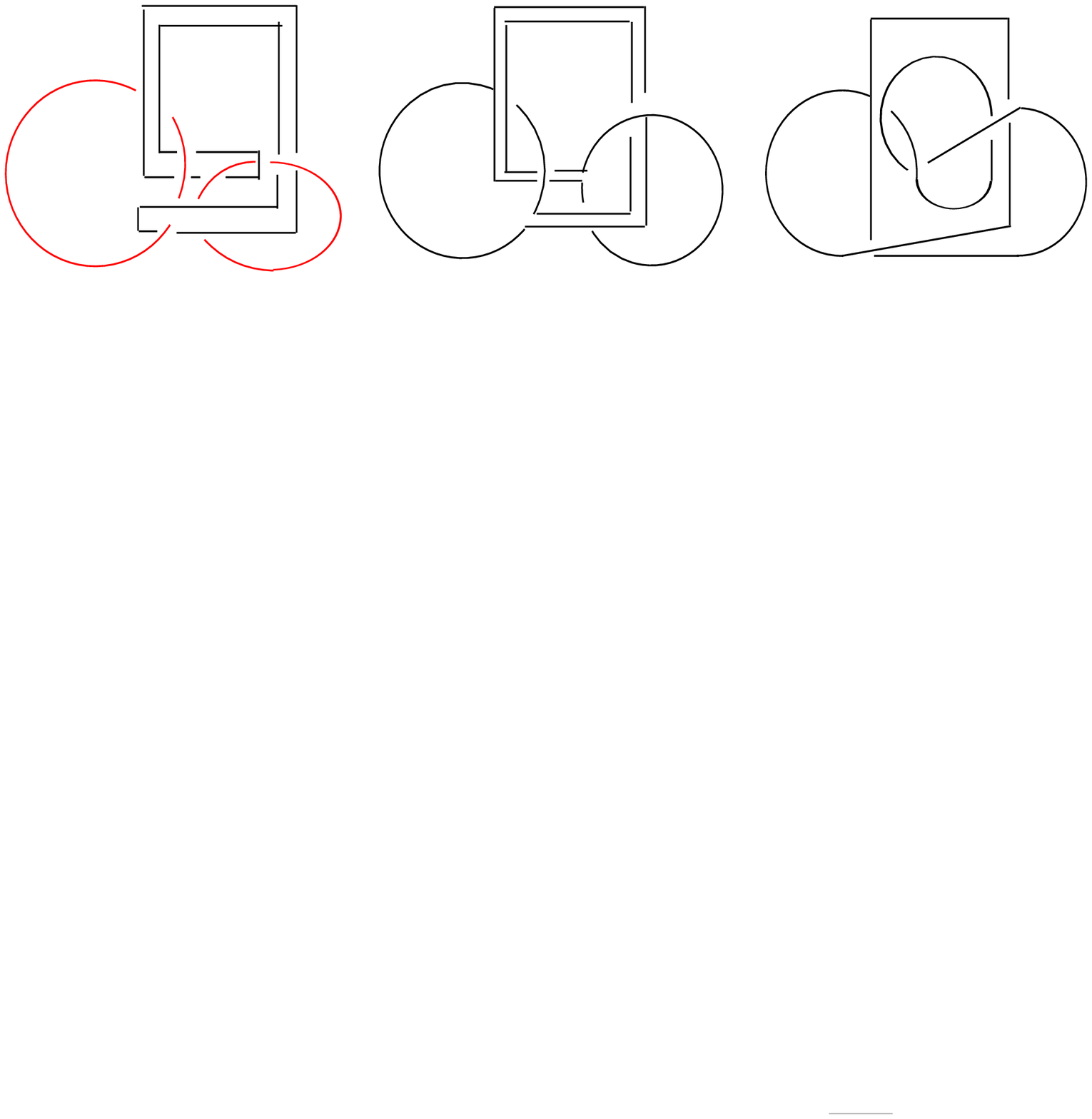}
    \caption{} \label{fig:Gompffig5}
    \end{figure}

\bigskip

{\em Question One:}  As described, $V_n$ does not obviously lie on a Seifert surface for $Q$.  According to Corollary \ref{cor:enumerate}, some handle slides of $V_n$ over $Q$ should alter $V_n$ so that it is one of the easily enumerated curves that do lie on the Seifert surface, in particular it would be among those that are lifts of (half of) the essential simple closed curves in the $4$-punctured sphere $P$.  Which curves in $P$ represent $V_n$ for some $n$?

\bigskip

{\em Question Two:}  Is each $L_{n,1} = Q \cup V_n$, $n \geq 3$, a counter-example to Generalized Property R?

\bigskip

This second question is motivated by Figure~\ref{fig:Gompffig1b}. As described in \cite{Go1}, the first diagram of that figure exhibits a simply connected 2-complex, presenting the trivial group as
 $$\langle x, y \; | \; y = w^{-1} x w,\  x^{n+1} = y^n \rangle,$$
 where $w$ is some word in $x^{\pm 1},y^{\pm 1}$ depending on $k$ and equal to $yx$ when $k=1$. If the 2-component link $L_{n,k}$ of Figure~\ref{fig:Gompffig3b} (obtained by blowing down the two bracketed circles) can be changed to the unlink by handle slides, then the dual slides in Figure~\ref{fig:Gompffig1b} will trivialize that picture, showing that the above presentation is Andrews-Curtis trivial.  For $k=1$, for example, this is regarded as very unlikely when $n \geq 3$. (For $n\le 2$, see Section~\ref{slice}.) Since surgery on $L_{n,k}$ is $\#_2(S^1\times S^2)$ by construction, this suggests an affirmative answer to Question Two, which (for any one $n$) would imply:

\bigskip

\begin{conj} \label{conj:not2R} The square knot does not have Property~2R.
\end{conj}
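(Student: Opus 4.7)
The plan is to establish the conjecture by exhibiting an explicit two-component link containing $Q$ that surgers to $\#_{2} (S^{1} \times S^{2})$ yet cannot be reduced to a $0$-framed unlink by any sequence of handle slides. The candidate links are the $L_{n,1}=Q\cup V_n$ for $n\ge 3$ already constructed in Section~\ref{sect:nonstandard}, so the task splits into two pieces: verifying the surgery calculation, and producing an obstruction to handle-slide triviality.

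The first piece is, in effect, already done. Reading the sequence Figure~\ref{fig:Gompffig1b} $\to$ Figure~\ref{fig:Gompffig2b} $\to$ Figure~\ref{fig:Gompffig3b} $\to$ Figure~\ref{fig:Gompffig4} $\to$ Figure~\ref{fig:Gompffig5} backwards, one sees that after blowing down the bracketed $[\pm 1]$-framed unknots to return to $S^3$, the $0$-framed pair of black $2$-handles is precisely $L_{n,1}$, while the original handle presentation forces the surgered manifold to be $\#_{2} (S^{1} \times S^{2})$. One should pause here to confirm that every slide performed in the reduction is a slide of bracketed or unbracketed circles over bracketed circles, so that the dual correspondence between slides on $L_{n,1}$ and Kirby moves on the Gompf diagram is a clean bijection.

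The second piece is the actual obstruction. I would argue that if $L_{n,1}$ could be converted to the $0$-framed unlink by handle slides (allowing, if needed, stabilization and destabilization by canceling $0$-framed Hopf pairs, which corresponds to elementary stabilization in the $4$-manifold), then reading the dualization backward would exhibit a sequence of handle moves simplifying the original diagram of Figure~\ref{fig:Gompffig1b} to the trivial $0$-handle / $1$-handle / $2$-handle / $3$-handle structure on $S^4$. At the level of $2$-complexes, the $1$-handles and $2$-handles in Figure~\ref{fig:Gompffig1b} read off the balanced presentation
\[\langle x,y\mid yxy = xyx,\ x^{n+1}=y^n\rangle\]
of the trivial group, and handle slides on $2$-handles realize precisely the Andrews--Curtis moves (relator replacement by products of conjugates of relators). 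Thus handle-slide trivialization of $L_{n,1}$ would yield an Andrews--Curtis trivialization of the above presentation.

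The main obstacle, and the reason the conclusion must be stated as a conjecture, is the last step: one must know that the displayed presentation is Andrews--Curtis nontrivial for at least one $n\ge 3$. No technique currently in the literature is strong enough to certify AC-nontriviality of any balanced presentation of the trivial group, although the experimental evidence and folklore of combinatorial group theorists both point this way. So what one can actually prove unconditionally is a conditional statement: \emph{if} the above presentation is AC-nontrivial for some $n\ge 3$, \emph{then} $Q$ fails Property~2R. An unconditional resolution of Conjecture~\ref{conj:not2R} would require either a breakthrough on Andrews--Curtis nontriviality for this specific family, or the discovery of a genuinely new invariant of framed links modulo handle slides that distinguishes $L_{n,1}$ from the unlink without passing through the $2$-complex.
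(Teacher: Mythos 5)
Your proposal is essentially the paper's own reasoning: the links $L_{n,1}=Q\cup V_n$ surger to $\#_{2}(S^{1}\times S^{2})$ by construction, a handle-slide trivialization would induce an Andrews--Curtis trivialization of the associated balanced presentation of the trivial group, and since AC-nontriviality cannot currently be certified the statement remains a conjecture rather than a theorem. One caveat: your parenthetical allowing ``stabilization and destabilization by canceling $0$-framed Hopf pairs'' is not harmless and should be deleted --- the paper points out in Section~\ref{sect:weakening} that adding a canceling Hopf pair destroys the Andrews--Curtis invariant (it introduces an obviously trivial relator), and Section~\ref{sect:Hopf} shows that the $L_{n,k}$ \emph{do} become handle-slide trivial after adding a single Hopf pair. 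Since Property~2R as defined permits only handle slides, the core conditional argument is unaffected, but the obstruction genuinely does not survive Hopf-pair stabilization.
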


The invariant we have implicitly invoked here can be described in a purely 3-dimensional way.  Suppose $L$ is an $n$-component framed link that satisfies the hypothesis of Generalized Property~R. Then surgery on $L$ yields $\#_n(S^1\times S^2)$, whose fundamental group $G$ is free on $n$ generators. Its basis $\{g_i\}$ is unique up to Nielsen moves. If we pick a meridian of each component of $L$, attached somehow to the base point, we obtain $n$ elements $\{r_i\}$ that normally generate $G$ (since they normally generate the group of the link complement). Thus, we have a presentation $\langle g_1,\cdots,g_n| r_1,\cdots,r_n\rangle$ of the trivial group. Changing our choices in the construction changes the presentation by Andrews-Curtis moves. If we change $L$ by sliding one component $L_i$ over another $L_j$, then $r_j$ ceases to be a meridian, but it again becomes one via the dual slide over a meridian to $L_i$ (cf. Figure~\ref{fig:dual2}). This is again an Andrews-Curtis move, multiplying $r_j$ by a conjugate of $r_i$. Thus, the link $L$ up to handle slides determines a balanced presentation of the trivial group up to Andrews-Curtis moves.  Unfortunately, there is presently no way to distinguish Andrews-Curtis equivalence classes from each other. When such technology emerges, it should be able to distinguish handle-slide equivalence classes of links satisfying the hypothesis of Generalized Property~R, such as the links $L_{n,k}$.   For a related perspective, see also \cite{CL}.

\bigskip

\section{Slice but not ribbon?} \label{slice}

A link in $S^3=\partial B^4$ is called {\em(smoothly) slice} if it bounds a collection of smooth, disjoint disks in $B^4$. It is called {\em ribbon} if the disks can be chosen so that the radial function on $B^4$ restricts to a Morse function without local maxima on the disks. Both conditions are preserved by (0-framed) handle slides, the net effect being to band-sum one disk with a parallel copy of another. Band-summing link components to create a knot similarly preserves both conditions. Slice knots are somewhat rare, and their constructions typically generate slice disks that are easily seen to be ribbon. In fact, the Slice-Ribbon Conjecture, asserting that every slice knot should be ribbon, is still open after more than three decades. The corresponding question for links is also still open. Potential counterexamples to the Generalized Property~R Conjecture are a natural source of potential counterexamples to the Slice-Ribbon Conjecture, since the conclusion of the former conjecture immediately implies a link and its band-sums are ribbon, whereas its hypothesis only guarantees that the link and its band-sums are slice in some possibly exotic smoothing of $B^4$ (Proposition~\ref{prop:slice}). In our case, the smoothing is standard, so we can construct slice knots with no obvious approach to proving that they are ribbon. This method appears to be the only currently known source of potential counterexamples to the Slice-Ribbon Conjecture (for knots or links). In light of Akbulut's recent work \cite{Ak}, more complicated potential counterexamples can also be generated by this same method; cf. \cite{FGMW}.

 \begin{figure}[ht!]
 \labellist
\small\hair 2pt
\pinlabel {{\color{red} left twist}} at 170 215
\pinlabel {{\color{red} right}} at 165 105
\pinlabel {{\color{red} twist}} at 165 90
\pinlabel {$k$ pairs} at 95 25
\pinlabel ${-n}$ at 75 125
\pinlabel ${n}$ at 115 70
  \endlabellist
    \centering
    \includegraphics[scale=0.7]{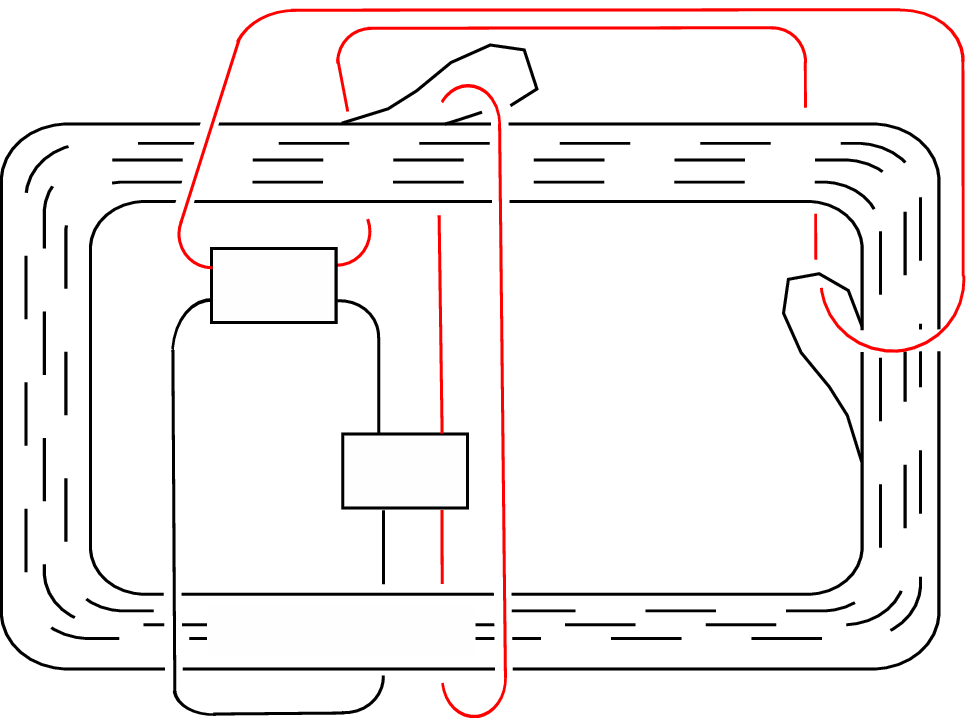}
    \caption{} \label{fig:Lnk}
    \end{figure}

Recall that the link $L_{n,k}$ is obtained from the right side of Figure~\ref{fig:Gompffig3b} by blowing down the red 2-component unlink. This was done explicitly for $k=1$ to obtain Figure~\ref{fig:Ln1}. For general $k$, we at least simplify the red unlink to obtain Figure~\ref{fig:Lnk}. To interpret this diagram, first consider the black unlink in the figure. The large component is the boundary of a disk in the plane that is wrapped in a spiral with $k$ turns. Thus, the circle runs around the spiral with $2k$  strands, occurring in oppositely oriented pairs. To obtain $L_{n,k}$, thread the red curves through the twist boxes as shown, then twist as indicated on the red unlink so that one component becomes the sum of torus knots $T_{n, n+1} \# \overline{T_{n, n+1}}$. (This can be done explicitly, but results in a messier picture.) The link $L_{n,k}$ was first shown in a complicated diagram of $S^3$, Figure~\ref{fig:Gompffig1b}, as the two meridians of the 2-handles. These obviously bound disjoint disks in the pictured 4-manifold, that are the cores of the 2-handles in the dual picture, explicitly exhibiting the proof of Proposition~\ref{prop:slice} in our case. Since the pictured 4-manifold is actually diffeomorphic to a 4-ball \cite{Go1}, it follows that $L_{n,k}$ is smoothly slice.

The authors do not know whether $L_{n,k}$ is ribbon except in the special cases $n=0,1$ or $k=0$ or $(n,k)=(2,1)$. (We continue to assume $n>0$ without loss of generality, since the symmetry of Figure~\ref{fig:Gompffig1b}, $\pi$-rotation about a horizontal axis in the plane of the paper, interchanges $n$ and $-n-1$. However, it may be useful to consider $k<0$.) In each of the special cases, the corresponding presentation is Andrews-Curtis trivial. (This is clear except for the last case, which is due to Gersten \cite{Ge} and reproduced for convenience at the end of this section.) In each case, the algebra can be followed by a trivialization of the handlebody Figure~\ref{fig:Gompffig1b} that introduces no new handles, and the dual computation shows that the corresponding $L_{n,k}$ satisfies the conclusion of Generalized Property~R. Thus, the special cases, and any knots created from them by band-sum, are ribbon. In principle, the ribbon disks can be explicitly determined from the construction, although the details may be laborious. In contrast, the remaining cases are mysterious. In general,  each component of $L_{n,k}$ is ribbon: A band move turns the small component in  Figure~\ref{fig:Gompffig3b} into a meridian of each of the bracketed circles, becoming an unlink when these are blown down. The corresponding move in Figure~\ref{fig:Lnk} is to cut through the spiral disk. The other component is ribbon since it is the sum of a knot with its mirror. Unfortunately, the individual ribbon disks seem to interfere with each other. One could try to trace the given slice disks through the construction, but it seems likely that the 2-3 handle pair needed for canceling the handlebody creates troublesome local maxima. On the other hand, the given slice disks have the unusual property that their complement is a regular neighborhood of a wedge of two circles; perhaps there is a pair of ribbon disks whose complement has more complicated fundamental group. In summary, we have the following:

\bigskip

{\em Questions:}  For $n\ge 2$, $k\ne 0$ and $(n,k)\ne (2,1)$, is $L_{n,k}$ a ribbon link? Are the slice knots made by band-summing its components always ribbon? Are they ever ribbon?

\bigskip

A simple example with $(n,k)= (3,1)$ is given in Figure~\ref{fig:sliceknot}.

Gersten trivializes the presentation $$\langle x,y \; | \; yxyx^{-1}y^{-1}x^{-1},\  x^3y^{-2}\rangle$$ corresponding to
$L_{2,1}$ by multiplying the first relator by $$(x^3y^{-2})[(yxy)y^{-2}x^3(yxy)^{-1}],$$ obtaining (after conjugation) $x^2y^{-1}xy^{-1}x^2y^{-1}$. Replacing the generator $y$ by $z=x^2y^{-1}$ changes the new relator to $x^{-1}z^3$, killing $x$ and hence $z$.

\section{Weakening Property nR} \label{sect:weakening}

The evidence of Section~\ref{sect:nonstandard} suggests that Property~2R, and hence the Generalized Property~R Conjecture, may fail for a knot as simple as the square knot.  There are other reasons to be unhappy with this set of conjectures:   For one thing, there is no clear relation between Property nR and Property (n+1)R:  If $K$ has Property nR, there is no reason to conclude that no $(n+1)$-component counterexample to Generalized Property R contains $K$.  After all, Gabai has shown that {\em every knot has Property 1R}.  Conversely, if $K$ does not have Property nR, so there is an $n$-component counterexample $L$ to Generalized Property R, and $K \subset L$, it may well be that even adding a distant $0$-framed unknot to $L$ creates a link that satisfies Generalized Property R.  So $K$ might still satisfy Property (n+1)R.

A four-dimensional perspective on this last possibility is instructive.  (See also \cite[Section 3]{FGMW}. ) Suppose $L$ is an $n$-component link on which surgery gives $\#_{n} (S^{1} \times S^{2})$.  Suppose further that, after adding a distant $0$-framed $r$-component unlink to $L$, the resulting link $L'$  can be reduced to an $(n+r)$-component unlink by handle-slides.  Consider the closed $4$-manifold $W$ obtained by attaching $2$-handles to $D^4$ via the framed link $L$, then attaching $\natural_{n} (S^{1} \times B^{3})$ to the resulting manifold along their common boundary $\#_{n} (S^{1} \times S^{2})$. Via \cite{LP} we know there is essentially only one way to do this.  The result is a simply-connected (since no $1$-handles are attached) homology $4$-sphere, hence a homotopy $4$-sphere.  If the $2$-handles attached along $L$ can be be slid so that the attaching link is the unlink, this would show that $W \cong S^4$, since it implies that the $2$-handles are exactly canceled by the $3$-handles.  But the same is true if handle-slides convert $L'$ to the unlink: the $4$-manifold $W$ is the same, but first attaching $r$ more trivial $2$-handles and then canceling with $r$ more $3$-handles has no effect on the topology of the associated $4$-manifold.  So from the point of view of $4$-manifolds, this weaker form of Generalized Property R (in which a distant $r$-component unlink may be added to the original link) would suffice.

From the $4$-manifold point of view there is a dual operation which also makes no difference to the topology of the underlying $4$-manifold: adding also pairs of canceling $1$- and $2$- handles.  From the point of view of Kirby calculus, each such pair is conventionally noted in two possible ways (see Section 5.4 of \cite{GS} and Figure \ref{fig:canceling} below):
\bigskip

\begin{itemize}

\item A dumb-bell shaped object.  The ends of the dumb-bell represent $3$-balls on which a $1$-handle is attached; the connecting rod represents the attaching circle for a canceling $2$-handle.

\bigskip

\item A Hopf link.  One component of the link is labeled with a dot and the other is given framing $0$.  The dotted component represents the $1$-handle and the $0$-framed component represents the attaching circle for a canceling $2$-handle.  Call such an object a canceling Hopf pair and call the union of $s$ such pairs, each lying in a disjoint $3$-ball, a {\em set of $s$ canceling Hopf pairs}.
\end{itemize}

\bigskip

(The rules for handle-slides of these dotted components in canceling Hopf pairs are fairly restricted; see \cite{GS}.  For example, they cannot slide over non-dotted components. Note that while we require the 2-handle of a canceling Hopf pair to have framing 0, we can change it to any even framing by sliding the 2-handle over the dotted circle to exploit the nonzero linking number.)

 \begin{figure}[ht!]
  \labellist
\small\hair 2pt
\pinlabel $0$ at 160 650
\pinlabel $0$ at 510 650
 \endlabellist
    \centering
    \includegraphics[scale=0.6]{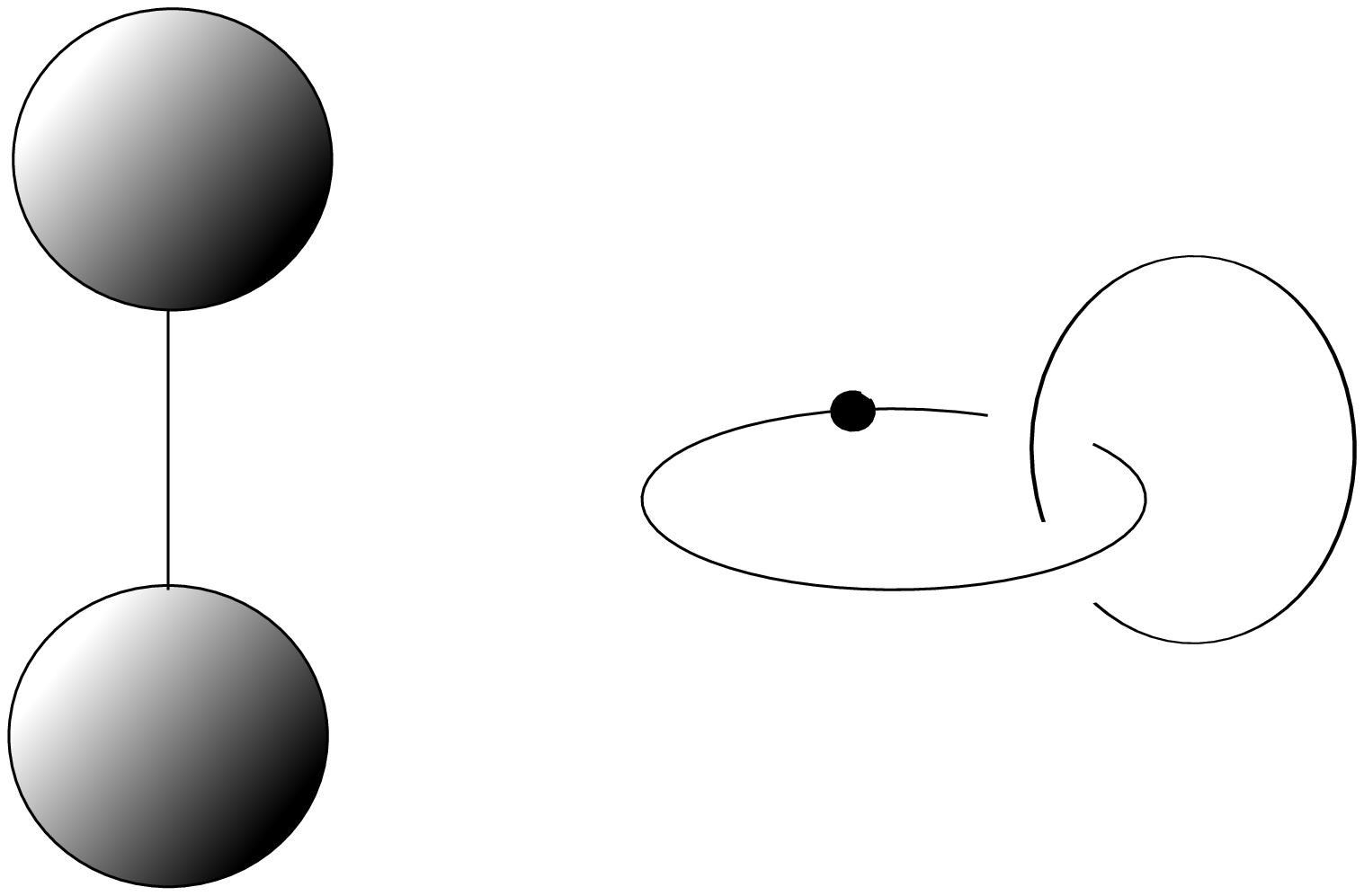}
    \caption{} \label{fig:canceling}
    \end{figure}

The $4$-manifold perspective suggests the following weaker and more awkward version of Generalized Property R:

\begin{conj}[Weak Generalized Property R] \label{conj:weakgenR} Suppose $L$ is a framed link of $n \geq 1$ components in $S^3$, and surgery on $L$ yields $\#_{n} (S^{1} \times S^{2})$.  Then, perhaps after adding a distant $r$-component $0$-framed unlink and a set of $s$ canceling Hopf pairs to $L$, there is a sequence of handle-slides that creates the distant union of an $n+r$ component $0$-framed unlink with a set of $s$ canceling Hopf pairs.
\end{conj}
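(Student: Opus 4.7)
The plan is to translate the conjecture into a 4-dimensional statement and thereby reduce it to a (stabilized) form of the smooth 4-dimensional Poincar\'e Conjecture, which is the framework already set up in the paragraphs preceding the conjecture. First, given a framed link $L$ satisfying the hypothesis, one forms the closed 4-manifold $W$ by attaching 2-handles to $D^4$ along $L$ with the specified framings and then capping off the boundary $\#_n(S^1\times S^2)$ with $\natural_n(S^1\times B^3)$. By Proposition~\ref{prop:frame} and the fact that no 1-handles are introduced, $W$ is simply connected with trivial intersection form, hence a homotopy 4-sphere; by \cite{LP} the 3-handle attachment is essentially unique, so $W$ depends only on $L$.

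Next, I would check that each of the three permitted moves preserves the diffeomorphism type of $W$: a handle slide is the standard Kirby move; adding a distant $0$-framed unknot to $L$ corresponds to introducing a canceling 2-/3-handle pair in $W$; and adding a canceling Hopf pair corresponds to introducing a canceling 1-/2-handle pair. Conversely, the target configuration of the conjecture -- a distant union of an unlink with canceling Hopf pairs -- is precisely the Kirby diagram of a 4-manifold whose handle decomposition cancels completely down to $D^4$ on the $S^3$ side, which together with the $\natural_n(S^1\times B^3)$ on the other side identifies $W$ with $S^4$. Thus the conjecture, for a given $L$, is equivalent to the statement that the associated homotopy 4-sphere $W$ is in fact \emph{smoothly} the standard $S^4$, with the added flexibility that we are allowed to stabilize by 1-/2- and 2-/3-handle pairs before seeking a cancellation.

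My strategy would then split into two directions. First, reduce to the known topological result: by Freedman \cite{Fr}, $W$ is homeomorphic to $S^4$, so one obtains a handle structure on the standard topological $S^4$ and needs only to smoothly trivialize it after stabilization. Second, try to realize the stabilizations concretely: use the flexibility of the added Hopf pairs to absorb the obstructions to Andrews--Curtis trivialization of the presentation of the trivial group that (as explained at the end of Section~\ref{sect:nonstandard}) is the only invariant of $L$ up to handle slides. Specifically, one would hope that each elementary move in a sequence witnessing smooth equivalence of $W$ to $S^4$ can be implemented in the link picture using at most finitely many Hopf-pair stabilizations, and then one reverses the reduction to eliminate those pairs at the end. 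Section~\ref{sect:Hopf} (not reproduced here) presumably carries this out in the specific case $L_{n,k}$, which provides a template.

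The main obstacle is, of course, that a general proof is essentially equivalent to the smooth 4-dimensional Poincar\'e Conjecture (or at least its stable form): any homotopy 4-sphere $\Sigma$ arises as some such $W$, and if we could carry out the conjecture unconditionally we would be showing $\Sigma\cong S^4$ after a controlled number of stabilizations. Consequently, the realistic target is not a full proof but a conditional one -- the conjecture is a theorem assuming the smooth Poincar\'e Conjecture, and it is open in general. The honest content of the work is therefore to formalize the above equivalence (so that potential counterexamples to Weak Generalized Property~R yield exotic 4-spheres that are not killed by any number of $(S^2\times D^2)$-stabilizations, or vice versa), and to document the move-by-move dictionary between handle stabilizations of $W$ and the Hopf-pair / distant-unlink operations on $L$, so that future progress on either side transfers automatically to the other.
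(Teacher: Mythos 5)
This statement is a conjecture, and the paper offers no proof of it; your proposal correctly recognizes this and instead reduces it to the Smooth 4-Dimensional Poincar\'e Conjecture for homotopy spheres without 1-handles, which is exactly the content and method of the Proposition the paper proves in Section 8 (forming the closed 4-manifold $W$ from the trace of the surgery plus $\natural_n(S^1\times B^3)$, invoking Laudenbach--Po\'enaru for the uniqueness of the 3-handle attachment, and matching the distant unlink and Hopf-pair moves with canceling 2-/3- and 1-/2-handle pairs via Cerf theory). Your dictionary between the link moves and handle stabilizations is the same as the paper's, so the approaches coincide.
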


The move to Weak Generalized Property R destroys the Andrews-Curtis invariant of the previous section, since adding a canceling Hopf pair leaves the surgered manifold, and hence $G$, unchanged, but introduces a new relator that is obviously trivial.  In fact, we will see in the next section that the links $L_{n,k}$ are not counterexamples to this weaker conjecture (even if we set set $r=0$, $s=1$). On the other hand, stabilization by adding a distant unknot to $L$ changes the corresponding presentation by adding a generator $g_{n+1}$ and relator $r_{n+1}=g_{n+1}$, and this preserves the Andrews-Curtis class. Thus, the Hopf pairs seem crucial to the conjecture, whereas it is unclear whether the distant unlink is significant. (Of course, if we allow 1-handles in our original diagram as in Figure~\ref{fig:Gompffig1b}, then the Andrews-Curtis problem seems to make the distant unlink necessary.)

\begin{defin} A knot $K \subset S^3$ has {\bf Weak Property nR} if it does not appear among the components of any $n$-component counterexample to the Weak Generalized Property R conjecture.
\end{defin}

 We have seen that Property nR and Weak Property nR are probably quite different, due to the Hopf pairs. The latter property also exhibits nicer formal behavior due to the distant unlink, in that if $K$ has Weak Property (n+1)R then it has Weak Property nR:  Suppose $K$ appears among the components of an $n$-component framed link $L$ and surgery on $L$ via the specified framing yields $\#_{n} (S^{1} \times S^{2})$. Then the corresponding statement is true for the union $L'$ of $L$ with a distant $0$-framed unknot.  If $K$ has Weak Property (n+1)R then the distant union $L''$ of $L'$ with the $0$-framed $r$-component unlink and a set of $s$ canceling Hopf pairs has a sequence of handle-slides that creates the $0$-framed unlink of $n + r + 1$ components and a set of $s$ canceling Hopf pairs.  But $L''$ can also be viewed as the union of $L$ with the $0$-framed $(r+1)$-component unlink and a set of $s$  canceling Hopf pairs. Thus $K$ satisfies Weak Property~nR.

The Weak Generalized Property R Conjecture is closely related to the Smooth (or PL) 4-Dimensional Poincar\'e Conjecture, that every homotopy 4-sphere is actually diffeomorphic to $S^4$. For a precise statement, we restrict attention to homotopy spheres that admit handle decompositions without 1-handles.

\begin{prop}  The Weak Generalized Property R Conjecture is equivalent to the Smooth 4-Dimensional Poincar\'e Conjecture for homotopy spheres that admit handle decompositions without 1-handles.
\end{prop}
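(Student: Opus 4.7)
The plan is to establish the two implications separately. For the forward direction, assume the Weak Generalized Property R Conjecture holds and suppose $\Sigma^4$ is a homotopy 4-sphere with a handle decomposition having only a 0-handle, some $n$ 2-handles, some 3-handles, and a 4-handle. The Euler characteristic forces exactly $n$ 3-handles. The attaching link $L$ of the 2-handles is a framed $n$-component link in $S^3$, and the middle level of the decomposition (between 2- and 3-handles) is the closed orientable 3-manifold obtained by surgery on $L$. Since $\Sigma$ is simply connected and the 3-handles are attached to essential 2-spheres that cap off, this middle level must be $\#_n(S^1\times S^2)$, and by \cite{LP} the upper handles are attached in an essentially unique way. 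Thus $L$ satisfies the hypothesis of Weak Generalized Property R. Apply the conjecture: after adding a distant $r$-component 0-framed unlink and $s$ canceling Hopf pairs to $L$, there is a handle-slide sequence carrying the result to the distant union of an $(n+r)$-component 0-framed unlink with $s$ canceling Hopf pairs. Adding a distant 0-framed unknot to the 2-handles corresponds to introducing a canceling 2-3 pair, while adding a Hopf pair corresponds to introducing a canceling 1-2 pair; neither changes the diffeomorphism type. The handle slides likewise do not change it. After the slides, every 2-handle is either in the unlink (canceled by the corresponding 3-handle) or is the 2-handle of a Hopf pair (canceled by the corresponding 1-handle). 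All handles outside the 0- and 4-handles cancel, so $\Sigma\cong S^4$.

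For the reverse direction, suppose the Smooth 4-dimensional Poincaré Conjecture holds for homotopy spheres with 1-handle-free decompositions, and let $L$ be an $n$-component framed link in $S^3$ with surgery $\#_n(S^1\times S^2)$. Build the closed 4-manifold $W$ by attaching 2-handles along $L$ to $D^4$ and then capping the resulting boundary $\#_n(S^1\times S^2)$ by $\natural_n(S^1\times B^3)$; by \cite{LP} this $W$ is well-defined up to diffeomorphism. Then $W$ is simply connected with trivial homology, hence a homotopy 4-sphere, and the given handle decomposition has no 1-handles. By hypothesis, $W\cong S^4$. Compare the given handle decomposition of $W$ with the trivial decomposition of $S^4$ (just a 0-handle and a 4-handle).

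The heart of the argument is now the standard stable-equivalence principle for handle decompositions of a fixed smooth 4-manifold: any two such decompositions become related by isotopy and handle slides alone after the addition of sufficiently many canceling 1-2 and 2-3 handle pairs. Applying this to the two decompositions of $S^4$ above, we obtain a sequence of handle slides (performed after the addition of some number $s$ of canceling 1-2 pairs and $r$ of canceling 2-3 pairs) converting the given decomposition into the trivial one. Read from the level set $\#_n(S^1\times S^2)$, the added 1-2 pairs are exactly $s$ canceling Hopf pairs inserted distantly into $L$, the added 2-3 pairs are a distant $r$-component 0-framed unlink inserted into $L$, and the handle slides descend to handle slides of components of this enlarged link in $S^3$. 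The resulting attaching link for the 2-handles is the attaching link of the trivial decomposition of $S^4$, namely the distant union of an $(n+r)$-component 0-framed unlink with $s$ canceling Hopf pairs, as required by Weak Generalized Property R.

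The main obstacle, and the step where I would need to be most careful, is invoking the stable-equivalence principle in the reverse direction: one must genuinely need only the addition of canceling pairs (not cancellations of pairs already present) to connect the two handle decompositions. This is a standard consequence of Cerf theory and the rearrangement lemmas for handle decompositions of smooth manifolds, but the translation back into the language of framed link diagrams must respect the conventions that Hopf pairs correspond to 1-2 cancellations and that distant unknots correspond to 2-3 cancellations, and must verify that handle slides at the 4-manifold level correspond exactly to the handle slides permitted in the statement of the conjecture (in particular that the restricted slide rules for dotted components are respected). Once these dictionary issues are confirmed, the proposition follows.
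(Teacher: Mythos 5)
Your proposal is correct and follows essentially the same route as the paper: in one direction translate the link-level moves into handle moves (adding canceling $2$-$3$ pairs for the distant unlink and $1$-$2$ pairs for the Hopf pairs, then canceling everything, with Laudenbach--Po\'enaru handling the $3$-handles), and in the other direction invoke Cerf-theoretic stable equivalence of handle decompositions of $S^4$. The one detail the paper settles that you leave under ``dictionary issues'' is normalizing the Hopf-pair framings to $0$: it first arranges them to be even by choosing the diagram to respect the unique spin structure on $\Sigma$, then slides over the dotted circles to reach framing $0$.
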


While there are various known ways of constructing potential counterexamples to the Smooth 4-Dimensional Poincar\'e Conjecture, each method is known to produce standard 4-spheres in many special cases. (The most recent developments are \cite{Ak}, \cite{Go2}.) Akbulut's recent work \cite{Ak} has eliminated the only promising potential counterexamples currently known to admit handle decompositions without 1-handles. For 3-dimensional renderings of the full Smooth 4-Dimensional Poincar\'e Conjecture and other related conjectures from 4-manifold theory, see \cite{FGMW}.

\begin{proof}  Suppose $\Sigma$ is a homotopy sphere with no 1-handles and $n$ $2$-handles.  Since there are no $1$-handles, the $2$-handles are attached to some framed $n$-component link $L \subset S^3$ in the boundary of the unique $0$-handle $D^4$ in $\Sigma$. Since $\Sigma$ has Euler characteristic 2, there are $n$ 3-handles attached to the resulting boundary, showing that surgery on $L$ is $\#_n(S^1\times S^2)$.  If the Weak Generalized Property R Conjecture is true, there are some $r, s$ so that when a distant $0$-framed $r$-component unlink and a distant set of $s$ canceling Hopf pairs is added to $L$ (call the result $L'$) then after a series of handle slides, $L'$ becomes the distant union of an $n+r$ component unlink and a set of $s$ canceling Hopf pairs.

To the given handle decomposition of $\Sigma$, add $r$ copies of canceling $2$- and $3$- handle pairs and $s$ copies of canceling $1$- and $2$-handle pairs.  After this change of handle decomposition, the manifold $\Sigma_2$ that is the union of $0$-, $1$- and $2$-handles can be viewed as obtained by the surgery diagram $L' \subset \bdd D^4$.  After a sequence of handle-slides, which preserve the diffeomorphism type of $\Sigma_2$, $L'$ is simplified as above; it can be further simplified by canceling the $1$- and $2$-handle pairs given by the set of $s$ canceling Hopf pairs.  What remains is a handle description of $\Sigma_2$ given by $0$-framed surgery on an unlink of $n+r$ components.

Since $\Sigma$ has Euler characteristic 2, it is obtained by attaching exactly $(n+r)$ $3$-handles and then a single $4$-handle to $\Sigma_2$.  If we view $\Sigma_2$ as obtained by attaching $2$-handles via the $0$-framed $(n+r)$-component unlink, there is an obvious way to attach some set of $(n+r)$ $3$-handles so that they exactly cancel the $2$-handles, creating $S^4$.  It is a theorem of Laudenbach and Poenaru \cite{LP} that, up to handle-slides, there is really only one way to attach $(n+r)$ $3$-handles to $\Sigma_2$.  Hence $\Sigma$ is diffeomorphic to $S^4$ as required.

Conversely, if $0$-framed surgery on an $n$-component link $L \subset S^3$ gives $\#_{n} (S^{1} \times S^{2})$ then a smooth homotopy $4$-sphere $\Sigma$ can be constructed by attaching to the trace of the surgery $n$ $3$-handles and a $4$-handle.  If $\Sigma$ is $S^4$, then standard Cerf theory says one can introduce some canceling $1$- and $2$- handle pairs, plus some canceling $2$- and $3$- handle pairs to the given handle description of $\Sigma$, then slide handles until all handles cancel.  But introducing these canceling handle pairs, when expressed in the language of framed links, constitutes the extra moves that are allowed under Weak Generalized Property R. (To see that the framings in the canceling Hopf pairs can be taken to be 0, first arrange them to be even by choosing the diagram to respect the unique spin structure on $\Sigma$, then slide as necessary to reduce to the 0-framed case.)\end{proof}

\section{How a Hopf pair can help simplify} \label{sect:Hopf}

With the Weak Generalized Property R Conjecture in mind, return now to the square knot example of Section \ref{sect:nonstandard}.   In \cite{Go1} it is shown that the introduction of a canceling $2, 3$-handle pair does make Figure~\ref{fig:Gompffig1b} equivalent by handle-slides to the corresponding canceling diagram.  In our dualized context, that means that the introduction of a canceling $1, 2$ handle pair (in our terminology, a canceling Hopf pair) should allow $Q \cup V_n$ (or more generally any $L_{n,k}$) to be handle-slid until the result is the union of the unlink and a canceling Hopf pair.  In particular, the square knot examples provide no evidence against the Weak Generalized Property R Conjecture.

It is a bit mysterious how adding a Hopf pair can help to simplify links. Since the dotted circle is not allowed to slide over the other link components, it may seem that any slides over its meridian would only need to be undone to remove extraneous linking with the dotted circle. In this section, we give a general procedure that exploits a nontrivial self-isotopy of the meridian to create families of potential counterexamples to Generalized Property~R. The examples $L_{n,k}$ of Section~\ref{sect:nonstandard} arise as a special case for each fixed $k$.  This suggests that Generalized Property~R probably fails frequently for links constructed by this method, whereas Weak Generalized Property~R does not.

\bigskip

Begin with an oriented 3-manifold $M$ containing a framed link $L=L'\cup L''$.  We will construct an infinite family of framed links in $M$ that may not be handle-slide equivalent, but become so after a Hopf pair is added. (In our main examples, $M$ will be $S^3$ with $L$ a 0-framed 2-component link.) Let $M'$ be the manifold made from $M$ by surgery on $L'$.

Let $\varphi: T^2\to M'$ be a generic immersion that is an embedding on each fiber $S^1\times\{t\}$, $t\in S^1=\R/\zed$.  Denote the image circle $\varphi(S^1\times\{t\})$ by $C_t$. Since $\varphi$ is generic, it is transverse both to $L''$ and to the cores of the surgery solid tori in $M'$.  In particular,
\begin{itemize}
\item only finitely many circles $C_{t_i}$  intersect $L''$
\item each such circle $C_{t_i}$ intersects $L''$ in exactly one point
\item the curves $C_{t_i}$ are pairwise disjoint and
\item each $C_{t_i}$ lies in the link complement $M-L' \subset M'$.
\end{itemize}

The immersion $\varphi$ determines an infinite family of framed links $L_n= L'\cup L_n''$ in $M$, $n\in \zed$: For each $t_i$, there is an annulus $A_i$ in $M-L'$ centered on $C_{t_i}$, transverse to $\varphi$ near $S^1\times\{t_i\}$ and intersecting $L''$ in an arc, with $A_i$ oriented so that its positive normal direction corresponds to increasing $t$. See Figure~\ref{fig:Ai}, where the left and right edges of each diagram are glued together. The annuli $A_i$ are unique up to isotopy, and we can assume they are pairwise disjoint. Let $L_n''$ be the framed link obtained from $L''$ by diverting each arc $L''\cap A_i$ so that it spirals $|n|$ times around in $A_i$ (relative to its original position) as in Figure~\ref{fig:Ai}, starting with a right turn if $n>0$. (Thus $L_0=L$.) We specify the framings by taking them to be tangent to $A_i$ everywhere inside these annuli (before and after) and unchanged outside. Note that $L_n$ depends on our initial choices of how to isotope the curves $C_{t_i}$ off of the surgery solid tori, but changing the choices only modifies $L_n$ by handle slides of $L_n''$ over $L'$.

 \begin{figure}[ht!]
 \labellist
\small\hair 2pt
\pinlabel  $t$ at 50 62
\pinlabel  $A_i$ at 100 95
\pinlabel  $A_i$ at 360 95
\pinlabel  {${\rm Im}\; \varphi$} at 85 20
\pinlabel  $C_t$ at 275 80
\pinlabel  $C_{t_i}$ at 245 62
\pinlabel  $L''$ at 145 130
\pinlabel  $L''_2$ at 420 130
\pinlabel  ${\rm framing}$ [l] at 170 130
\endlabellist
     \centering
    \includegraphics[scale=0.7]{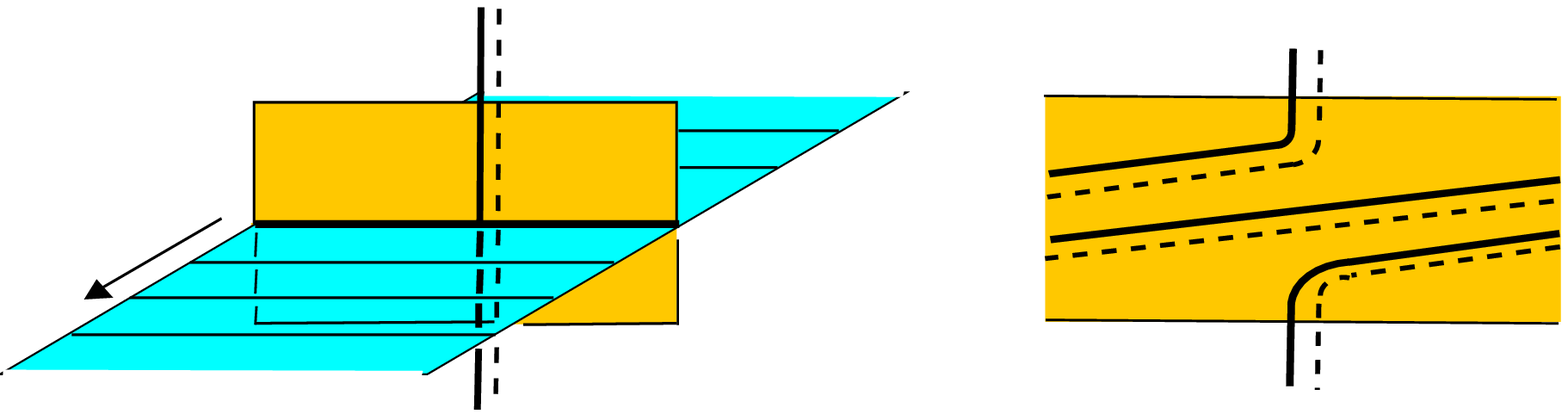}
    \caption{} \label{fig:Ai}
    \end{figure}

In our main examples below, $\varphi$ is an embedding. Whenever this occurs, there is a self-diffeomorphism of $M'$ sending $L''$ onto  $L_n''$, namely the $n$-fold Dehn twist along $\im \varphi$ parallel to the circles $C_t$. This shows directly in this case that the manifolds obtained by surgery on $L_n$ are all diffeomorphic, regardless of $n$. In general, we can obtain more specific information:

\begin{prop} \label{prop:Ln}
For $L\subset M$ and $\varphi$ a generic immersion as above, suppose that $C_0$ is disjoint from all other circles $C_t$  and bounds an embedded disk in $M$ whose algebraic intersection with nearby curves $C_t$ is $\pm 1$. Then all of the framed links $L_n$ become handle-slide equivalent after the addition of a single canceling Hopf pair.
\end{prop}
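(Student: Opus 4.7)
My plan is to show that after adding a single canceling Hopf pair $\delta\cup\mu$ to $L$, all of the framed links $L_n\cup\delta\cup\mu$ become pairwise handle-slide equivalent. By transitivity it suffices to give a one-step argument that passes from $L_m$ to $L_{m+1}$ while restoring the Hopf pair to its original isotopy class; iterating this one-step procedure $n$ times (with the same Hopf pair throughout) then produces the desired equivalence between $L_0\cup\delta\cup\mu$ and $L_n\cup\delta\cup\mu$. Since $C_0$ bounds the disk $D\subset M$, it is unknotted in $M$, so I take $\delta$ to be a pushoff of $C_0$ bounding a disk $D'$ parallel to $D$, and $\mu$ to be a small $0$-framed meridian of $\delta$. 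The hypothesis that $C_0$ is disjoint from the other $C_t$ lets me arrange $\delta\cup\mu$ to be disjoint from $L$ and from $\im\varphi$ outside a small collar of $C_0$.

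The hypothesis $D\cdot C_t=\pm 1$ for $t$ near $0$ says exactly that the $t$-direction on $\im\varphi$ is transverse to $D$ and crosses it once algebraically. In particular, $\mu$, viewed as a meridian of $C_0$, can be dragged along the torus through a one-parameter family $\mu_s$ of meridians of $C_s$, $s\in S^1$, returning to its original position. This family is not an isotopy through the complement of $L\cup\delta$: at each $s=t_i$, $\mu_s$ would cross $L''$ at the unique point $C_{t_i}\cap L''$, and at values of $s$ where $\varphi$ meets the dual core of the surgery on $L'$, $\mu_s$ would cross $L'$. The first type of obstruction is resolved by a handle slide of $L''$ over $\mu$ using a band in the annulus $A_i$ tangent to the $t$-direction; the band-sum replaces the arc $L''\cap A_i$ with the once-spiraled arc of Figure~\ref{fig:Ai}, framed tangent to $A_i$, which is precisely the modification distinguishing $L_{m+1}$ from $L_m$. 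The second type is resolved by sliding $\mu$ over $L'$; since the $t$-loop closes up, such slides occur in canceling pairs and leave $\mu$ isotopic to its original position.

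The main technical point will be verifying that the band chosen for the slide of $L''$ over $\mu$ at each $t_i$ yields exactly one new spiral in $A_i$ with the prescribed framing tangent to $A_i$. This should follow from the framing addition formula combined with the fact that, once $\mu_s$ has been pushed onto $\im\varphi$ in a collar of $C_{t_i}$, $\mathrm{lk}(L'',\mu_s)=0$; the bookkeeping of which slides cancel and in which order is the principal delicate part. A related subtlety is that the disk $D$ may itself meet $L'$ in $M$, in which case positioning $\mu$ onto the torus requires preliminary slides of $\mu$ over $L'$, again balanced by the pair-cancellation mechanism above. Conceptually, the disk $D$ trivializes, up to these bookkeeping handle slides, the Dehn-twist monodromy along $\im\varphi$ that would otherwise distinguish the $L_n$, and the Hopf pair supplies the extra $2$-handle whose slides implement this trivialization.
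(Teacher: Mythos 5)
Your overall skeleton matches the paper's: place a Hopf pair near $C_0$, transport something once around the torus $\im\varphi$, absorb each crossing with $L''$ by a handle slide that inserts one spiral in $A_i$, and absorb encounters with the surgery tori by slides over $L'$. But there is a genuine gap in \emph{what} you transport. The modification that turns $L_n''$ into $L_{n\pm1}''$ inside $A_i$ is a band-sum of $L''$ with a framed pushoff of the \emph{core} $C_{t_i}$ of $A_i$; to realize it as a handle slide you therefore need a $2$-handle attached along an essential curve parallel to the fibers $C_t$, carrying framing $\pm1$ relative to $\im\varphi$. Your $\mu$ is instead a small meridian of $\delta$. Dragging a small meridian of $C_s$ around the torus sweeps out only a thin tube around a section of $\im\varphi$, which generically misses $L''$ altogether; and even if you route it through the points $C_{t_i}\cap L''$, sliding $L''$ over a small nullhomotopic circle that links no component of the link is just an isotopy of $L''$ across a small disk --- it inserts no spiral in $A_i$. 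The hypothesis $D\cdot C_t=\pm1$, which you invoke only to say the $t$-direction crosses $D$, is never actually applied to the framing of the transported curve, and that is exactly where it is needed.

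The paper's fix is to set the Hopf pair up the other way around: the dotted circle is the pushoff $C^*$ of $C_0$ determined by disjointness from $\im\varphi$ (this is the $\pm1$-framing, by the hypothesis on $D$ --- not your $0$-pushoff $\delta$, which cannot be kept parallel to and off the torus), and the $2$-handle is attached along $C_0$ itself with framing $\pm2$ in $M$, i.e.\ $\pm1$ relative to the torus and $0$ relative to $C^*$. A single slide of this $2$-handle over $C^*$, plus slides of $L$ over the resulting meridian to clear the disk of $C^*$, exhibits the configuration as a standard canceling Hopf pair; run backwards, this is the repositioning step your argument is missing. It is then this fiber-parallel, $\pm1$-framed $2$-handle that is isotoped through the whole family $C_t$ (never meeting $C^*$, which sits off the torus), and the $\pm1$ relative framing is precisely what makes the slide of $L_n''$ over it at each $A_i$ produce one correctly framed spiral. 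Two smaller points: the slides over $L'$ need not ``occur in canceling pairs'' --- they are simply permitted handle slides and are absorbed into the ambiguity already built into the definition of $L_n$; and the delicate issue is not the ordering of slides but the identification of the transported curve and its framing.
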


\noindent Since $C_0$ is nullhomologous in $M$, it has a canonical longitude, determined by any Seifert surface in $M$. Thus, the last hypothesis can be restated by saying that $C_0$ is an unknot in $M$ for which the immersion $\varphi$ determines the $\pm 1$ framing.

To see that the hypotheses are not unreasonable, note that they are satisfied for $M=S^3$, $L'$ empty and $C_0$ a $(1,\pm 1)$-curve in the standard torus $T$. In this case, the links $L''_n$ are obviously all equivalent for any fixed $L''$. (Since $T$ is embedded, the links $L''_n$ are all related as above by Dehn twisting along $T$ in $M'=M=S^3$. These Dehn twists are isotopic to the identity by an isotopy fixing one complementary solid torus of $T$ and twisting the other by both of its circle actions.) The more interesting examples $L_{n,k}$ that arise in \cite{Go1} will be exhibited below. These all arise from the same general procedure: Start with a 0-framed unlink in $S^3$. Slide handles to create a more complicated link $L$ satisfying Generalized Property~R. Locate a suitable immersion $\varphi$, then apply Proposition~\ref{prop:Ln} to create an infinite family $\{L_n\}$ of links that still satisfy the hypothesis of Generalized Property~R, but not necessarily the conclusion for most $n$. One can generalize still further, for example replacing the domain of $\varphi$ by a Klein bottle, although the authors presently have no explicit examples of this.

\begin{proof}[Proof of Proposition~\ref{prop:Ln}]
Since $\varphi$ is an immersion and $C_0$ is disjoint from all other $C_t$, there is a neighborhood of $C_0$ in $M$ intersecting $\im \varphi$ in an annulus. Let $C^*$ be a circle in this neighborhood, parallel to $C_0$ and disjoint from  $\im \varphi$. Thus, it is pushed off of $C_0$ by the $\pm 1$ framing in $M$. (All signs in the proof agree with the one in the statement of the theorem.) Let $L^*_n\subset M$ be the framed link obtained from $L_n$ by adding a $\pm 2$-framed component along $C_0$ and a dotted circle along $C^*$. We will show that $L_n^*$ is obtained from $L_n$ by adding a canceling Hopf pair and sliding handles, and that the links $L_n^*$ are all handle-slide equivalent. (As usual, the dotted circle is not allowed to slide over the other link components.) This will complete the proof.

To reduce $L^*_n$ to $L_n$, slide $C_0$ over the dotted circle $C^*$ using the simplest band connecting the parallel circles. This changes $C_0$ to a 0-framed meridian of $C^*$. (To compute the framing, recall that the dotted circle is automatically 0-framed, and that if $C_0$ and $C^*$ have parallel orientations, we are subtracting handles with linking number $\pm 1$.) The dotted circle $C^*$ bounds a disk $D\subset M$ and now has a 0-framed meridian, but this is not yet a canceling Hopf pair since the link $L$ may still intersect $D$. However, we can easily remove each intersection by sliding the offending strand of $L$ over the meridian. This does not change $L$ but liberates the Hopf pair as required.

To see that the links $L_n^*$ are handle-slide equivalent, we wish to isotope their common component $C$ at $C_0$ around the continuous family $C_t$, returning to the original position at $C_1=C_0$. By construction, $C$ will never meet the dotted circle at $C^*$, but it will meet components of $L_n$. Since the isotopy lies in $M'$, each encounter with $L'$ consists of a push across a surgery solid torus in $M'$, which is precisely a handle slide in $M$ of $C$ over a component of $L'$. On the other hand, $L''_n$ transversely intersects the path of $C$, once at each annulus $A_i$, so we must somehow reverse the crossing of $C$ with $L_n''$ there. Since the framing of $C$ has $\pm 1$ twist relative to $\varphi$, and hence to $A_i$, we can reverse the crossing by sliding $L_n''$ over $C$. This is shown by Figure~\ref{fig:cross} (or its mirror image if the sign is $-1$), where  $A_i$ is drawn as in Figure~\ref{fig:Ai}, but after a diffeomorphism so that $L_n''$ appears as a vertical line. (To see that the framing transforms as drawn, note that the right twist in the framing of the curve parallel to $C$ cancels the left twist introduced by the self-crossing added to $L_n''$ by the band-sum.) When $C$ returns to its original position at $C_1=C_0$, the link has been transformed to $L_{n\pm1}^*$ by handle slides, completing the proof.
\end{proof}

 \begin{figure}[ht]
 \labellist
\small\hair 2pt
\pinlabel  $A_i$ at 20 370
\pinlabel  $A_i$ at 20 200
\pinlabel  $A_i$ at 20 33
\pinlabel  $C$ at 65 390
\pinlabel  $C$ at 65 210
\pinlabel  $C$ at 65 65
\pinlabel  {${\rm pushed\; off} \; C$} at 55 450
\pinlabel  $L''_n$ at 105 480
\pinlabel  $L''_{n+1}$ at 100 147
\endlabellist
     \centering
    \includegraphics[scale=0.7]{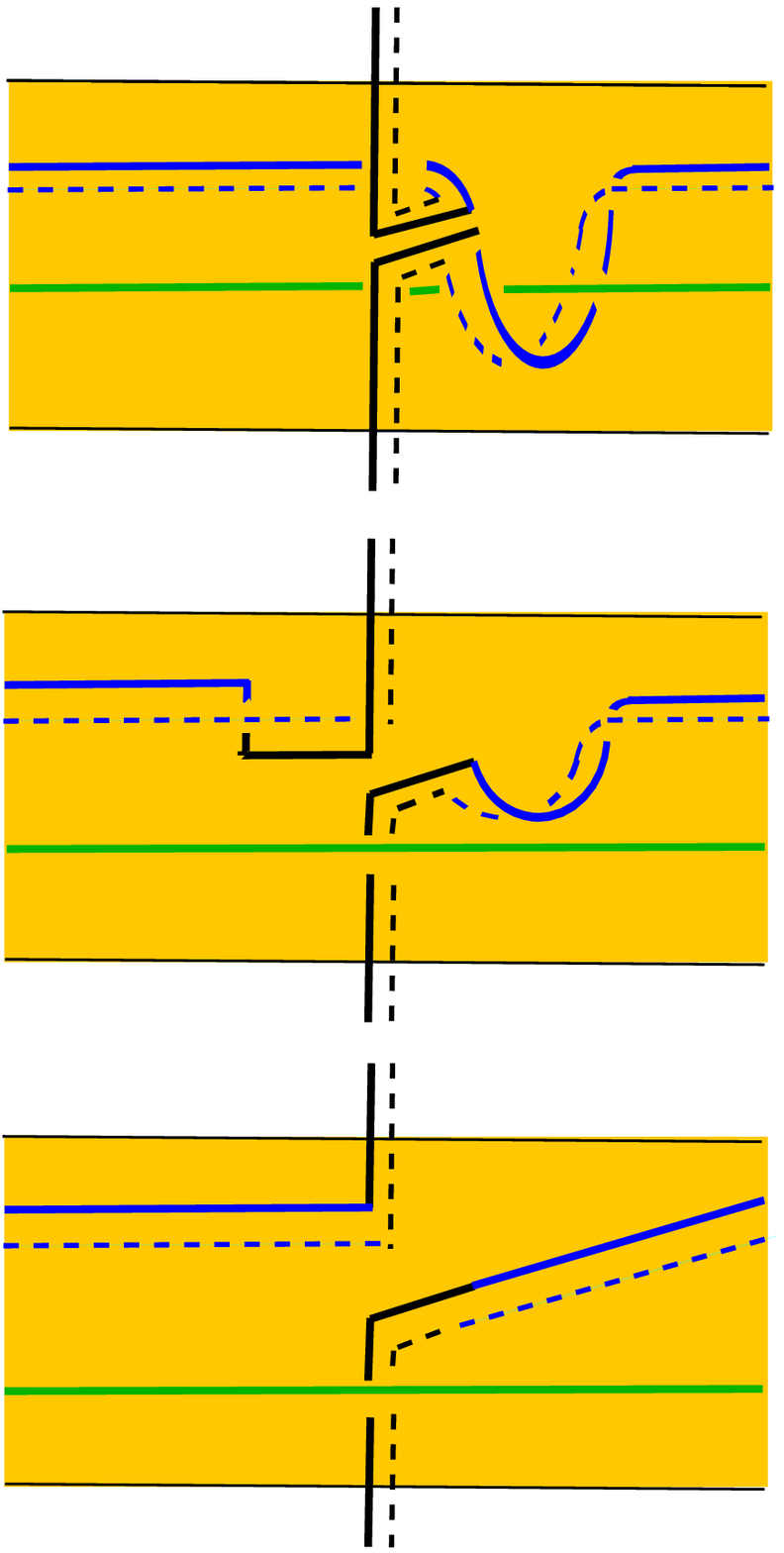}
    \caption{} \label{fig:cross}
    \end{figure}

The examples $L_{n,k}$ from Section~\ref{sect:nonstandard} form a family as in Proposition~\ref{prop:Ln} for each fixed $k$, generated by the handle-slide trivial case $n=0$. Figure \ref{fig:gompf6} shows the starting point.  It is basically the right side of Figure \ref{fig:Gompffig3b} with both a twist-box and one of the 0-framed links moved to a more symmetric position.

 \begin{figure}[ht]
  \labellist
\small\hair 2pt
\pinlabel  $k$ at 80 130
\pinlabel  $-k$ at 275 130
\pinlabel  $n$ at 170 160
\pinlabel  $-n$ at 170 50
\pinlabel  $[1]$ at 20 245
\pinlabel  $[-1]$ at 210 180
\pinlabel  $0$ at 175 115
\pinlabel  $0$ at 25 130
\endlabellist
     \centering
    \includegraphics[scale=0.7]{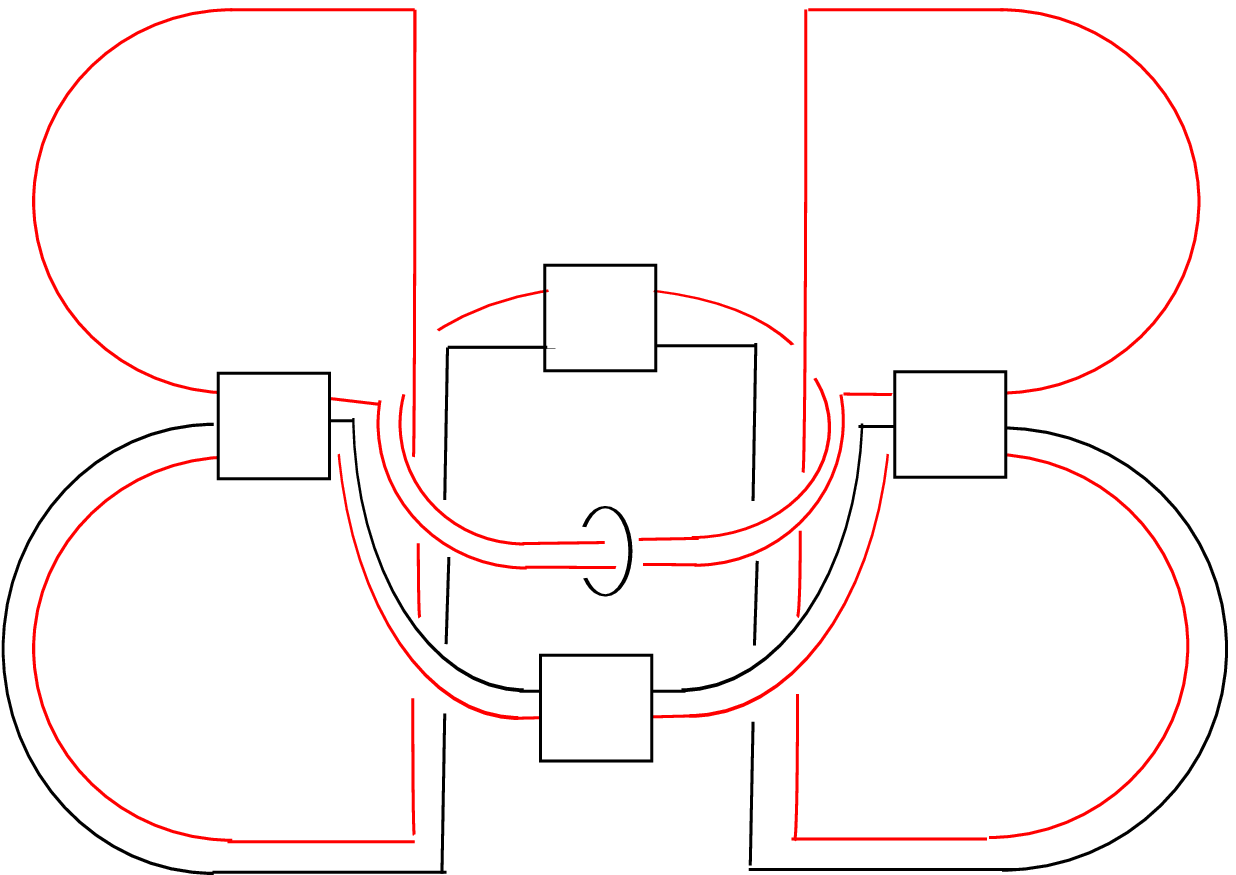}
    \caption{} \label{fig:gompf6}
    \end{figure}

Let $M$ be the manifold diffeomorphic to $S^3$ obtained by surgery on the bracketed (red) circles. (Recall that these form an unlink in $S^3$.) Let $L'$ and $L''_n$ be the small and large 0-framed circles, respectively. There is a vertical plane, perpendicular to the page, that bisects the picture.
This plane bisects both $\pm n$ twist boxes and contains $L'$. We interpret this plane as a 2-sphere in the background $S^3$. Surgery on $L'$ splits the sphere into a disjoint pair of spheres. Neither of these lies in $M'$ since each is punctured by both bracketed circles. However, we can remove the punctures by tubing the spheres together along both bracketed curves, obtaining an embedded torus in $M'$. The fibers $C_t$ of this torus include a meridian of each bracketed circle, and the torus induces the 0-framing on these relative to the background $S^3$. To see these in $M$, ignore the link $L_n$ and unwind the bracketed link. Blowing down reveals the 0-framed meridian of the $[\mp 1]$-framed curve to be a $\pm 1$-framed unknot in $M$ (see \cite[Figure 5.18]{GS}). Thus, we can choose either meridian to be $C_0$, with framing $\pm 2$ in $M$, i.e., $\pm 1$ in $S^3$. (In either case, the framing in $S^3$ is preserved as we travel around the torus, so it becomes 0 in $M$ when we reach the other meridian, showing that we cannot get off the ride in the middle.) Since $L_n''$ intersects the torus at two points, at the $\pm n$-twist boxes, it is easy to verify that the procedure turns $L_n^*$ into $L_{n\pm 1}^*$, with the sign depending both on the choice of $C_0$ and the direction of motion around the torus. It is instructive to view the procedure explicitly as a sequence of four handle slides in Figure~\ref{fig:gompf6}.

It is not completely obvious how to trivialize the link by handle slides when $n=0$. In the $k=1$ case, Figures \ref{fig:Gompffig4} and \ref{fig:Gompffig5} isotope the link to Figure \ref{fig:squareknot}, solving the problem with a single handle slide. For general $k$, it is helpful to begin from a somewhat simpler picture of $L_{n,k}$, Figure~\ref{fig:example}, which is obtained from Figure \ref{fig:gompf6} by an isotopy that rotates both $\pm k$ twist boxes 180 degrees outward about vertical axes. Now set $n = 0$ and consider the two handle slides in Figure~\ref{fig:example2} (following the arrows, framed by the plane of the paper).  The slides first complicate $L'$, but after an isotopy shrinking the strands shown in green, $L'$ simplifies,  so that a slide over the $[-1]$-framed circle changes it to a meridian of the $[+1]$-framed circle. The $\pm k$-twist boxes then cancel, showing that the link is trivial.

\begin{figure}[ht]
     \labellist
\small\hair 2pt
\pinlabel  $k$ at 75 150
\pinlabel  $-k$ at 275 150
\pinlabel  $n$ at 170 100
\pinlabel  $-n$ at 170 20
\pinlabel  $[1]$ at 25 220
\pinlabel  $[-1]$ at 220 220
\pinlabel  $0$ at 180 200
\pinlabel  $0$ at 25 100
\endlabellist
 \centering
    \includegraphics[scale=0.7]{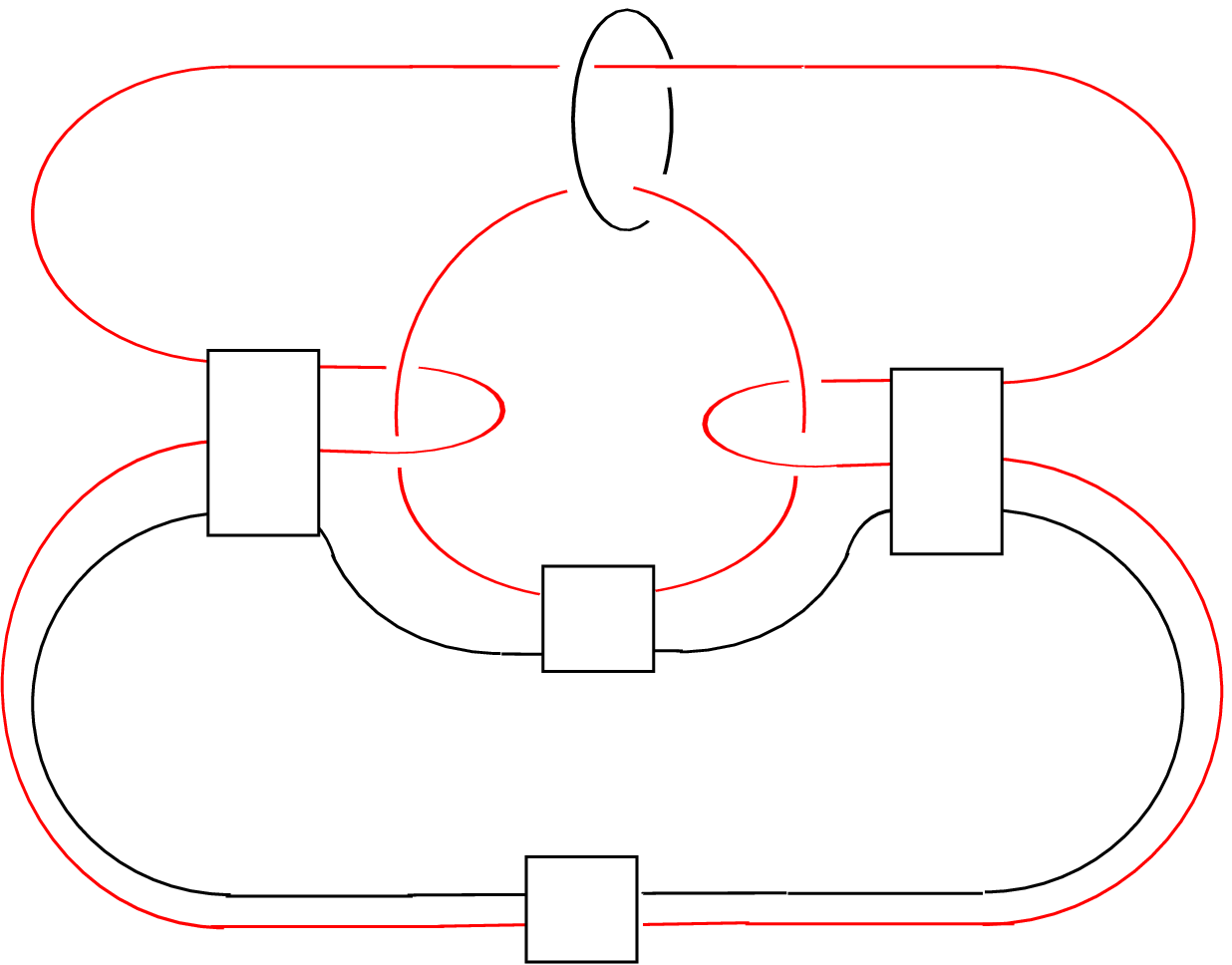}
    \caption{} \label{fig:example}
    \end{figure}

    \begin{figure}[ht]
     \labellist
\small\hair 2pt
\pinlabel  $0$ at 105 433
\pinlabel  $0$ at 105 387
\endlabellist
     \centering
    \includegraphics[scale=0.7]{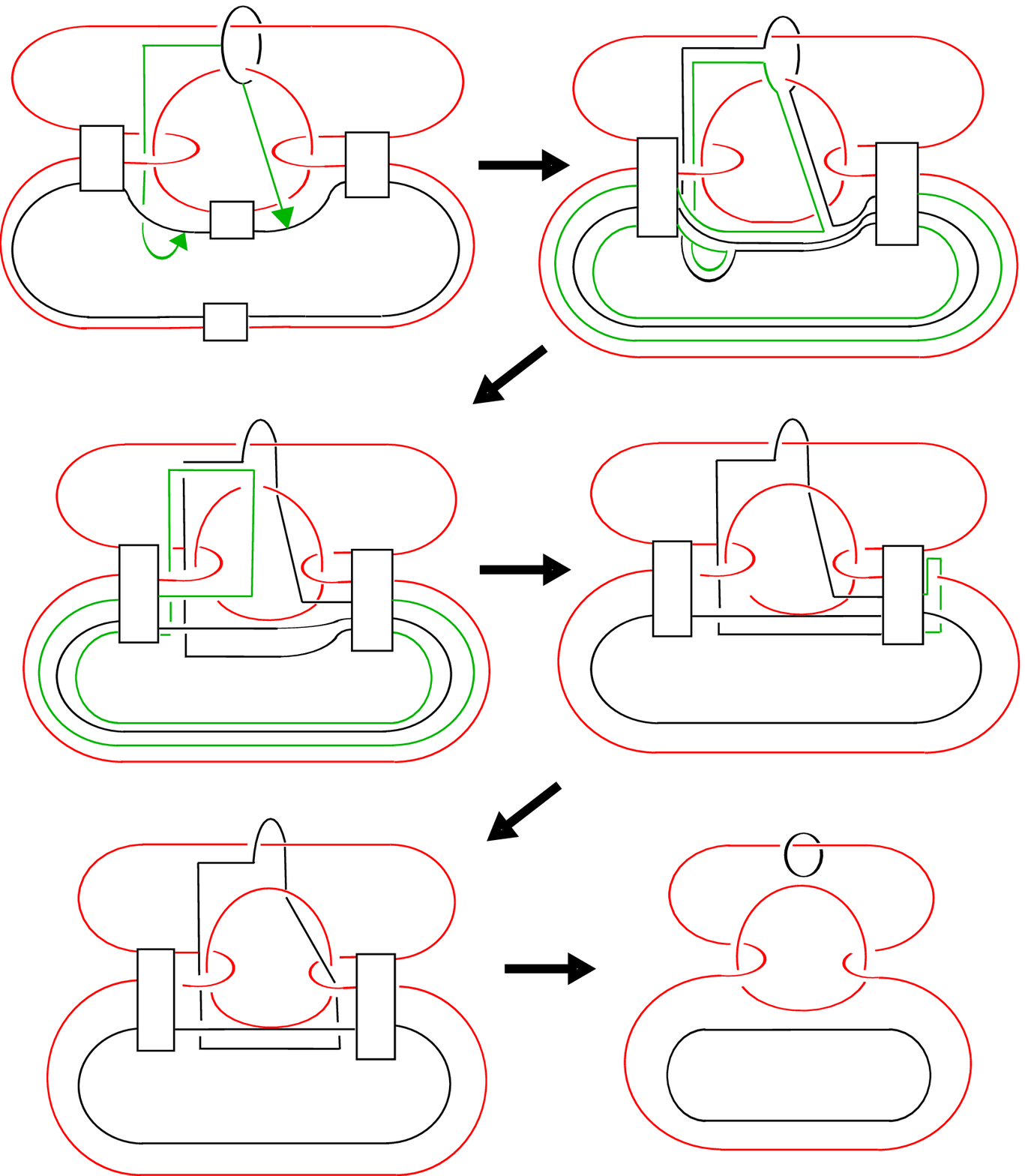}
    \caption{} \label{fig:example2}
    \end{figure}

This construction was derived from the dual construction in \cite{Go1}, showing that the relevant homotopy balls are standard by introducing a canceling 2-handle/3-handle pair. While it was not easy to explicitly dualize the construction, in the end the two constructions appear rather similar. The construction in \cite{Go1} also involves introducing a new link component and moving it around a torus. In that case, the new component was a 0-framed unknot, the attaching circle of the new 2-handle.  Akbulut's recent proof \cite{Ak} that infinitely many Cappell-Shaneson homotopy 4-spheres are standard again relies on such a trick. In each case, the key seems to be an embedded torus with trivial
normal bundle, to which a 2-handle is attached with framing $\pm 1$, forming
a {\em fishtail neighborhood}. This neighborhood has self-diffeomorphisms
that can undo certain cut-and-paste operations on the 4-manifold, as we implicitly
saw in the case of Proposition~\ref{prop:Ln} with $\varphi$ an embedding.
While this procedure has been known in a different context for at least
three decades, underlying the proof that simply connected elliptic surfaces with fixed $b_2$ are
determined by their multiplicities, it still appears to be underused. In
fact, one can obtain a simpler and more general proof that many Cappell-Shaneson
4-spheres are standard by going back to their original definition and
directly locating fishtail neighborhoods \cite{Go2}.

\end{document}